\newcommand{\ass}{\quad\mbox{as}\quad}
\newcommand{\inn}{{\quad\hbox{in } }}
\newcommand{\onn}{{\quad\hbox{on } }}
\newcommand{\ttt}{\tilde }
\newcommand{\nn}{ {\nabla}  }
\newcommand{\pp}{ {\partial} }
\newcommand{\C}{{\mathbb C}}
\newcommand{\R} {\mathbb R}
\newcommand{\cuad}{{\sqcap\kern-.68em\sqcup}}
\newcommand{\DD}{{\mathcal D}}
\newcommand{\foral}{\quad\mbox{for all}\quad}
\newcommand{\ve}{\varepsilon}
\newcommand{\be}{\begin{equation}}
\newcommand{\ee}{\end{equation}}
\newcommand{\la}{\lambda}
\newcommand{\equ}[1]{(\ref{#1})}
\newtheorem{lemma}{Lemma}[section]
\newtheorem{prop}{Proposition}[section]
\newtheorem{theorem}{Theorem}
\newtheorem{remark}{Remark}[section]
\newcommand{\bremark}{\begin{remark} \em}
\newcommand{\eremark}{\end{remark} }
\numberwithin{equation}{section}
\begin{document}

\title[Type II blow up for the critical heat equation]{Geometry driven Type II higher dimensional blow-up for the critical heat equation}

\author[M. del Pino]{Manuel del Pino}
\address{\noindent   Department of Mathematical Sciences University of Bath,
Bath BA2 7AY, United Kingdom }
\email{m.delpino@bath.ac.uk}

\author[M. del Pino]{Monica Musso}
\address{\noindent   Department of Mathematical Sciences University of Bath,
Bath BA2 7AY, United Kingdom }
\email{m.musso@bath.ac.uk}

\author[J. Wei]{Juncheng Wei}
\address{\noindent  Department of Mathematics University of British Columbia, Vancouver, BC V6T 1Z2, Canada
}  \email{jcwei@math.ubc.ca}

\begin{abstract}
We consider the problem
\begin{align*}
v_t   & = \Delta v+ |v|^{p-1}v   \inn\ \Omega\times (0, T),  \\
v &  =0 \onn  \partial \Omega\times (0, T ) ,  \\
v&  >0\inn\  \Omega\times (0, T) .
\end{align*}
In a domain $\Omega\subset {\mathbb R  } ^d$, $d\ge 7$ enjoying special symmetries, we find the first example of a solution with type II blow-up for a power $p$ less than the Joseph-Lundgren exponent
$$p_{JL}(d)=\begin{cases} \infty, & \text{if $3\le d\le 10$},\\ 1+{4\over d-4-2\,\sqrt{d-1}}, & \text{if $d\ge11$}. \end{cases} $$
No type II radial blow-up is present for $p< p_{JL}(d)$.
 We take $p=\frac{d+1}{d-3}$, the Sobolev critical exponent in one dimension less. The solution blows up
on circle contained in a negatively curved part of the boundary in the form of a sharply scaled Aubin-Talenti bubble, approaching its energy density
a Dirac measure for the curve. This is a completely new phenomenon for a diffusion setting.
\end{abstract}

\maketitle

\section{Introduction}

Perhaps the most studied model of singularity formation or blow-up in nonlinear parabolic problems is the
semilinear heat equation

\begin{align}\label{P1}
v_t   & = \Delta v+ |v|^{p-1}v   \inn\ \Omega\times (0, T),  \\
v &  =0 \onn  \partial \Omega\times (0, T ) ,  \nonumber\\
v&  >0\inn\  \Omega\times (0, T) .\nonumber
\end{align}
where $p>1$. Here  $\Omega$ be a smooth domain in $ \R^d$ (or entire space) and $0<T\le +\infty$. A smooth solution $u(x,t)$ of Problem \equ{P1}
is said to blow-up at time $T$ if
$$
\lim_{t\to T} \|v(\cdot ,t)\|_{L^\infty(\Omega)} = +\infty.
$$
The key issue in the study of blow-up phenomena is to understand how and where explosion can take place.
 The blow-up is said to be of type I if we have that
 \be
\limsup_{t\to T} (T-t)^{\frac 1{p-1}}\|v(\cdot ,t)\|_{L^\infty(\Omega)} <+\infty
\nonumber \ee
and of type II if
 \be
\limsup_{t\to T} (T-t)^{\frac 1{p-1}}\|v(\cdot ,t)\|_{L^\infty(\Omega)} =+\infty.
\nonumber \ee
Type I means that the blow-up takes place like that of the ODE $v_t = v^p$, so that in the explosion mechanism
the nonlinearity plays the dominant role. In a related interpretation, the blow-up
``respects'' the natural scalings of the problem.
The second alternative is rare and far less understood. The delicate interplay of diffusion, nonlinearity
and geometry of the domain is responsible for that scenario.

\medskip 	
The role of the Sobolev critical exponent
$$p_S(d)=\begin{cases} \infty, & \text{if $1\le d\le 2$},\\ \frac{d+2}{d-2}, & \text{if $d\ge 3$}. \end{cases} $$
is well-known to be central in the possible types of blow-up for \equ{P1}.

\medskip
 When  $1<p< p_S(d)$  solutions can only have type I blow-up, as it was first  established by Giga and Kohn \cite{gk1} for the case of $\Omega$  convex, and in \cite{poon} in for a general domain. This is also the case for $p=p_S(d)$ and radial solutions of \equ{P1} \cite{fhv1}, or if $\Omega$ is  star-shaped \cite{cheng}. Type II blow-up radial sign-changing solutions exists for $p=p_S(4)$  \cite{fhv1,schweyer}.

 \medskip
Refined asymptotics of Type I blow-up together with constructions and classification results have been obtained in many works, we refer the reader to
\cite{collot2,gk2,gk3,matanomerle,mz,qs,velazquez} and references therein.

\medskip
Type I is expected to be in any reasonable sense the ``generic'' way in which blow-up takes place for any  $p>1$, see
\cite{collot1,fhv1,matanomerle,mm1}.

 \medskip
 Type II blow-up solutions
are much harder to be detected. The only examples known are for $d\ge 11$ and $p> p_{JL}(d)$
where $p_{JL}(d)$ is the  {\em Joseph-Lundgren exponent} \cite{jl} defined as
$$p_{JL}(d)=\begin{cases} \infty, & \text{if $3\le d\le 10$},\\ 1+{4\over d-4-2\,\sqrt{d-1}}, & \text{if $d\ge11$}. \end{cases} $$
Herrero and Vel\'azquez \cite{hv,hv1} found a radial solution that blows-up with type II rate. The local profile locally resembles a time-dependent, asymptotically singular scaling of a positive radial solution of
\be\label{sss} \Delta w + w^p =0\quad\hbox{in } \R^d.\ee
See also \cite{mizoguchi} for the case of a ball, and \cite{collot4} for an arbitrary domain with the same profile profile when $p$
is in addition an odd integer.  A main ingredient in the constructions is the stability of radial solutions of \equ{sss}  whenever $p>p_{JL}(d)$ \cite{gnw}. No positive solution (radial or not) is stable for $p\le p_{JL}(d)$  \cite{farina}.

\medskip
In \cite{mm1}
Matano and Merle proved that in the radially symmetric case no Type II blow-up can take place if  $p_S< p\le  p_{JL}(d)$, a result that  precisely complements that for the Herrero-Velazquez range. Recently in \cite{collot1} an entire finite energy, axially symmetric type II blow-up solution  was built for $d\ge 12$ and  $p> p_{JL}(d-1)> p_{JL}(d)$.

\medskip
A question that has remained conspicuously open for many years is whether or not type II blow-up solutions of \equ{P1} can exist in
the Matano-Merle range $p_S(d)< p< p_{JL}(d)$. Such solutions must of course be non-radial. In this paper we prove that the answer is {\bf yes} in dimension $d\ge 7$ and $p= \frac{d+1}{d-3}=p_S(d-1)  $ in a class of domains with  axial symmetry.

\medskip
Let us identify $\R^d \simeq \C \times \R^{d-2}$ and consider the orthogonal transformations
\be
Q_\alpha (z) = ( e^{i\alpha}(z_1+iz_2) , z), \quad \pi_i(z,x') =  (z_1+iz_2, z_3,\ldots,-z_i,\ldots z_{d}).
\label{trans}\ee
We assume that $\Omega$ is a smooth, bounded domain with $0\not\in \Omega$,  that is invariant under this transformations:
$$
Q_\alpha (\Omega)= \Omega , \quad \pi_i(\Omega)=\Omega \foral \alpha\in \R, \quad  i=3,\ldots, d.
$$
In other words $\Omega$ is a radial domain in the first two coordinates, even in the remaining ones which does not contain the origin.
In $\Omega$ we consider the problem
\begin{align}\label{P2}
v_t   & = \Delta v+ v^{\frac {d+1}{d-3}}   \inn\ \Omega\times (0, T),  \\
v &  =0 \onn  \partial \Omega\times (0, T ) ,  \nonumber\\
v&  >0\inn\  \Omega\times (0, T) .\nonumber
\end{align}
Let
$
m= \inf \{ |z_1+iz_2|\ /\ (z_1+iz_2,0) \in \Omega\}  >0
$
so that the curve  $$\Gamma:= \{  (z_1+iz_2,0)\   / \ |z_1+iz_2| =m  \}   $$
 is a circle contained in $\pp\Omega$.

\begin{theorem}\label{teo1}
Let $d\ge 7$ and $\Omega$ a domain as described above. For any sufficiently small $T>0$,
there exists a smooth solution $v(z,t)$ of problem $\equ{P2}$ 
that remains uniformly bounded outside any neighborhood of the curve $\Gamma$ while
$$
\lim_{t\to T} (T-t)^{ \gamma  } \|v(\cdot,t)\|_{L^\infty(\Omega)}  >0 , \quad
\gamma = {(d-3) (d-4) \over 2(d-5)}.
$$
\end{theorem}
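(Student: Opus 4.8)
We plan to construct $v$ by the inner--outer parabolic gluing scheme. The starting point is that a tubular neighbourhood of $\Gamma$ looks like $(\text{circle})\times\R^{d-1}$ and that $p=\tfrac{d+1}{d-3}=p_S(d-1)$ is exactly energy critical for the $(d-1)$-dimensional cross-section, so the natural local profile is the Aubin--Talenti bubble of $\R^{d-1}$. First we introduce Fermi coordinates $(\theta,y)$ near $\Gamma$, with $\theta\in\R/(2\pi m)\Z$ arclength along $\Gamma$ and $y=(y',\tau)\in\R^{d-2}\times\R$ normal coordinates, $\tau$ the inner normal to $\partial\Om$. By the imposed symmetries $v$ is independent of $\theta$, so near $\Gamma$ equation \equ{P2} reduces to a perturbation of $v_t=\Delta_{\R^{d-1}}v+v^p$, the extra terms being a first-order drift $\tfrac1m\partial_\tau v$ coming from the curvature of $\Gamma$, together with terms carrying the second fundamental form of $\partial\Om$ along $\Gamma$. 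The approximate solution is $U_{\mu,\xi}(y)=\mu^{-\frac{d-3}{2}}U\!\big(\tfrac{y-\xi}{\mu}\big)$, $\Delta_{\R^{d-1}}U+U^p=0$, with scaling parameter $\mu=\mu(t)\to0$ and centre $\xi=\xi(t)=(0,h(t))$ lying over $\partial\Om$ at height $h(t)\to0$, in the regime $\mu\ll h$ so that on the bubble scale the Dirichlet boundary is far away; to $U_{\mu,\xi}$ we add a harmonic-type ``image'' correction $\varphi_0$, of relative size $(\mu/h)^{d-3}$ near the bubble, restoring $v=0$ on $\partial\Om$ to principal order, plus explicit corrections absorbing the drift and curvature terms. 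Writing $v=U_{\mu,\xi}+\varphi_0+(\text{geometric corrections})+\eta\,\phi_{\rm in}+\phi_{\rm out}$, with $\eta$ a cutoff supported near $\Gamma$, yields a coupled system: an inner parabolic equation for $\phi_{\rm in}$, in the self-similar variables $z=\tfrac{y-\xi}{\mu}$ and a new time $s$ with $ds/dt=\mu^{-2}$, of the form $\partial_s\phi-\Delta_z\phi-pU^{p-1}\phi=(\text{error})+(\text{coupling})$, together with an outer linear heat equation for $\phi_{\rm out}$ on $\Om$ with zero boundary values.

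The reduced equations for the parameters come from the orthogonality conditions that make the inner equation solvable: projecting its right-hand side on the kernel of $\Delta_z+pU^{p-1}$, spanned by the dilation mode $Z_0=\partial_\mu U_\mu$ and the admissible translation $Z_\tau=\partial_\tau U_\mu$ (the other translations and the $\theta$-translation being removed by the symmetry). Computing these projections gives, to leading order,
\[ \dot\mu\;=\;-\,c_1\,\frac{\mu^{\,d-4}}{h^{\,d-3}}\,,\qquad \dot h\;=\;-\,c_2+(\text{lower order}), \]
where $c_1>0$ is the projection of $pU_\mu^{p-1}\varphi_0$ on $Z_0$ (the attraction of a positive bubble by the Dirichlet wall; here the constant $\int_{\R^{d-1}}Z_0\,U^{p-1}=\tfrac{(d-3)^2}{2(d+1)}\int U^p>0$ enters), and $c_2=\tfrac1m+(\text{curvature corrections})>0$ is the projection of the geometric drift on $Z_\tau$. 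It is the sign of $c_2$ --- favourable precisely because $\Gamma$ is the minimal circle, i.e.\ a negatively curved portion of $\partial\Om$ --- that drives $h\downarrow0$ in finite time and makes the construction possible; on a convex (e.g.\ star-shaped) portion of a general domain the sign would be reversed, in accordance with the Matano--Merle nonexistence of radial Type II blow-up for $p_S(d)<p<p_{JL}(d)$. Prescribing $h(0)=c_2T$ gives $h(t)=c_2(T-t)(1+o(1))$; inserting this into the $\mu$-equation and solving the resulting Bernoulli ODE yields
\[ \mu(t)\;=\;A\,(T-t)^{\frac{d-4}{d-5}}(1+o(1)),\qquad A=\Big(\tfrac{(d-4)\,c_2^{\,d-3}}{(d-5)\,c_1}\Big)^{\!\frac1{d-5}}>0. \]
Since $\|v(\cdot,t)\|_{L^\infty}\sim c_*\,\mu(t)^{-\frac{d-3}{2}}$, this gives exactly $(T-t)^{\gamma}\|v(\cdot,t)\|_{L^\infty}\to c_*A^{-\frac{d-3}{2}}>0$ with $\gamma=\tfrac{(d-3)(d-4)}{2(d-5)}$; moreover $\mu/h\sim(T-t)^{1/(d-5)}\to0$ confirms the a priori ordering, and the bubble carries a fixed, scale-invariant amount of $\R^{d-1}$-energy, so that $|\nabla v|^2+|v|^{p+1}$ tends to a constant multiple of arclength measure on $\Gamma$.

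With $(\mu,h)$ allowed to range in the class $\mu\approx A(T-t)^{\frac{d-4}{d-5}}$, $h\approx c_2(T-t)$, we would then establish a priori estimates uniform as $\mu\to0$: for the inner problem, solvability of $\partial_s\phi-\Delta_z\phi-pU^{p-1}\phi=f$ in weighted $L^\infty$ spaces (spatial decay $|z|^{-a}$, power weights in $s$) modulo the kernel, by blow-up/barrier arguments; for the outer problem, parabolic Schauder or $L^\infty$ estimates in weights measuring the slow decay $\sim\mu^{\frac{d-3}{2}}|y-\xi|^{-(d-3)}$ of the bubble tail near $\Gamma$. Together with estimates showing the error of $U_{\mu,\xi}+\varphi_0+(\text{geometric corrections})$ is small in these norms, a Schauder fixed point (a contraction in the coupling) would solve the system for $(\phi_{\rm in},\phi_{\rm out})$; the two remaining orthogonality conditions would then be solved for the two scalar parameters $\mu,h$ by a fixed-point/shooting argument accounting for the unstable modes of the linearized ODEs. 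Positivity $v>0$ is then automatic since $v$ is a small perturbation of a positive bubble, and the construction is finally extended to all of $\Om$, yielding uniform boundedness away from $\Gamma$.

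The hard part will be the \emph{uniform linear theory for the inner parabolic operator} and its matching with the outer problem: in the cross-sectional dimension $d-1$ the bubble and its dilation mode decay only at the borderline rate $|z|^{-(d-3)}=|z|^{-((d-1)-2)}$, so the right-hand sides one has to invert are only marginally integrable, which is closely tied to the hypothesis $d\ge7$ and forces a carefully tuned choice of space-time weights, so that the inverse gains just enough decay to run the contraction and to be consistent with the outer solution in the transition region. The second delicate point is the \emph{precise derivation of the reduced ODE system, in particular the sign of $c_2$}: computing the interplay between the curvature $1/m$ of $\Gamma$ and the normal curvatures of $\partial\Om$, and obtaining the sign that pushes the bubble into the boundary rather than away from it, is the exact geometric input that distinguishes $\Gamma$ from a star-shaped part of the boundary and is what makes a non-radial, geometry-driven Type II blow-up possible below $p_{JL}(d)$.
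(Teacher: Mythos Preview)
Your proposal is correct and follows essentially the same approach as the paper: reduction by the axial symmetry to a critical heat equation in dimension $d-1$ with an extra drift $\tfrac{1}{x_1}\partial_{x_1}$, ansatz bubble plus image correction near the boundary, inner--outer gluing, and the same reduced ODE system $\dot h\approx -1/m$, $\dot\mu\approx -c\,\mu^{d-4}/h^{d-3}$ yielding $\mu\sim(T-t)^{(d-4)/(d-5)}$ and hence the stated $\gamma$. The paper carries this out via the global cylindrical reduction $v(z,t)=u(|z_1+iz_2|,z_3,\dots,z_d,t)$ rather than local Fermi coordinates, which makes the geometric input entirely explicit (only the drift $1/x_1$ appears at leading order, no second fundamental form of $\partial\Omega$), and takes the image correction to be literally a reflected Aubin--Talenti bubble $U\big(\tfrac{x-\hat\xi}{\lambda}\big)$.
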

We notice that for $p= \frac{d+1}{d-3} $  we have
$
\frac 1{p-1}  = \frac {d-3} 4 < \gamma
$
so that $v$ exhibits type II blow-up.

\medskip
The construction provides very accurate information on the solution. The principle is very simple
We let $n= d-1$ and consider the standard Aubin-Talenti function \cite{t}
\be\label{bubble}
U(y) = \alpha_n \left (  \frac 1 {1+ |y|^2} \right )^{\frac{n-2}2}, \quad \alpha_n = (n(n-2))^{\frac 1{n-2}},
\ee
which is a positive solution of
$
\Delta U + U^{\frac{n+2}{n-2}} =0 .
$
The solution has the form $$v(z,t) \sim \frac 1{\la(t)^{\frac{n-2}2} } U\left (\frac x{\la(t)}\right) $$ where $x$ is the vector joining $z$ and its closest point to a circle of the form $(1+d(t))\Gamma$ contained in the domain, with $\la(t)\to 0$ and $d(t)\to 0$ as $t\to T$.
We have that the energy density $|\nn u(z,t)|^2$ concentrates in the form of a Dirac mass for the curve $\Gamma$, a phenomenon usually called bubbling. Bubbling at points
triggered by criticality is a feature known in several different contexts, including dispersive equations and geometric flows. There is a broad literature on that matter. We refer the reader for instance to \cite{av,CDM,CMR,dd,DMW,dkm,galk2,j1,kenigmerle,kst,MRR,RR} and their references.

\medskip
The phenomenon of higher dimensional boundary bubbling here discovered is definitely triggered by geometry and is entirely new in the diffusion setting.
It is worth mentioning that similar blow-up triggered by geometry of the boundary under axial symmetry has been numerically conjectured to hold
for the three dimensional Euler equation in \cite{hou}.

\medskip
In an elliptic context,  a result with resemblance to the current one was found in \cite{dmp}, methodologically connected with \cite{dkw}.
In fact we conjecture that a construction like the one here should be possible along a negatively curved closed geodesic of the boundary.
We believe that  geometry is essential and that in a convex domain the Matano-Merle range of exponents may still lead to non-existence of type II
blow-up.

\medskip
The 2-variable radial symmetry of the domain leads us to look for a solution with the same symmetry of a problem in a domain with one dimension less
at the critical exponent where point bubbling is obtained. This problem is methodologically challenging. For instance
 the method in the construction in \cite{schweyer}
of point  type II blow up in the radial sign-changing context in dimension 4, based on the pioneering work by Merle, Raphael, Rodnianski
\cite{MRR}, later applied to various blow-up problems, does not seem not apply here.  See \cite{collot4} for a difficult adaptation of that technique for a related problem in a non-radial setting, yet only valid for odd integer powers, which is never our case.

\medskip
We close this introduction by mentioning that our proof applies equally well to an exterior domain of the same nature. Besides, within the
symmetry class the phenomenon we obtain is codimension 1-stable (presumably highly unstable outside symmetry). We shall not elaborate in
that issue, which is a rather direct consequence of our construction.

\medskip
We devote the rest of this paper to the proof of Theorem \ref{teo1}.

\medskip
{\bf Notation.} \ \
We use the symbol $'' \, \lesssim \, ''$ to indicate $'' \, \leq C \, ''$, for a positive constant $C$, whose value may change from line to line, and also inside the same line, and which is independent of $t$ and $T$.

\medskip

\section{Scheme of the proof}\label{schema}

Let $d=n+1$, and consider the change of variables
$$
v(z_1 , \ldots , z_d , t) = u (\sqrt{z_1^2 + z_2^2} , z_3 , \ldots , z_d , t)
$$
for some $u = u(x_1 , \ldots , x_n, t)$.
In terms of $u$, solving \eqref{P2} in the class of functions $v$ that are invariant under the orthogonal transformations \eqref{trans} translates into solving
\begin{equation}
\label{pp} \begin{split}
u_t&= \Delta u+ {1\over x_1} {\partial u \over \partial x_1} +u^p \ \mbox{in} \ \quad \DD \times (0, T), \quad p={n+2 \over n-2}\\
 u&>0 \quad \mbox{in} \quad \DD \times (0,T), \quad u=0 \quad \mbox{on} \quad \partial \DD \times (0,T),
 \end{split}
\end{equation}
with $u(x_1, \ldots , x_j , \ldots , x_n ) = u (x_1, \ldots , -x_j , \ldots , x_n ) $ for any $j=2, \ldots , n$.
Here
 $\DD$ is the smooth bounded domain in $\R^n$
 defined as
$$
\Omega = \{ (z_1 , \ldots , z_d ) \in \R^d \, : \, (\sqrt{z_1^2 + z_2^2 } , z_3 , \ldots , z_d ) \in \DD \},
$$
 with the properties
 $$
 (x_1 , \ldots , x_j , \ldots , x_n ) \in \DD \quad \Longleftrightarrow \quad (x_1 , \ldots , -x_j , \ldots , x_n ) \in \DD , \quad j=2, \ldots , n,
$$
and
$$
\DD \cup \{ (x_1 , \bar x ) \in \R \times \R^{n-1} \, : \, \bar x = \bar 0 \} \not= \emptyset.
$$
With no loss of generality, we assume that
$$
\inf \{ r>0 \, : \, (r, \bar 0 ) \in \DD \} = 1.
$$

 \medskip
The result contained in Theorem \ref{teo1} is expressed in terms of Problem \eqref{pp} in the following

 \begin{theorem}\label{teo}
 For any $T$ small enough, there exists a finite time blow-up solution to Problem \eqref{pp} of the form
 \begin{equation*}\begin{split}
u(x,t) &= \la^{-{n-2 \over 2}} (t) \Biggl[ U\left({x-\xi (t) \over \la(t) } \right) -   U\left({x-\hat \xi (t) \over \la(t) } \right)\Biggl] \left( 1+ o(1)\right) + O(1)
\end{split}
\end{equation*}
where $O(1)$ is uniformly bounded and $o(1)\to 0$ uniformly as $t\to T$.
Here $U$ is defined in \eqref{bubble},
and
\begin{equation}\label{points}
\xi (t )  = (1+ d (t) ) \, {\bf e}_1, \quad \hat \xi (t) = (1-d (t)) \, {\bf e}_1, \quad {\mbox {where}} \quad {\bf e}_1 = (1, \bar 0)
\end{equation}
with
$$ d (t) =  (T-t) \, \left(1+ o(1) \right), \ \lambda =  (T-t)^{n-3 \over n-4} \, \left(1+ o(1) \right), \quad {\mbox {as}} \quad t\to T
$$
\end{theorem}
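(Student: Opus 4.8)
The plan is to prove this by the parabolic inner--outer gluing method. Set $n=d-1\ge6$ and work with Problem \eqref{pp} on $\DD\subset\RR^n$. For a center $q\in\RR^n$ write $U_{\la,q}(x)=\la^{-\frac{n-2}2}U\bigl(\tfrac{x-q}{\la}\bigr)$, and take as first approximation
\[
u_0=U_{\la,\xi}-U_{\la,\h\xi}+Z^{*},
\]
with $\xi=\xi(t)$, $\h\xi=\h\xi(t)$ as in \eqref{points}, the parameters $\la(t),\ d(t)$ still free, and $\h\xi$ the mirror image of $\xi$ across the tangent hyperplane of $\pp\DD$ at ${\bf e}_1$. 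Since $U_{\la,\xi}-U_{\la,\h\xi}$ vanishes identically on that hyperplane, it is small on $\pp\DD$, of size controlled by the curvature of $\pp\DD$ at ${\bf e}_1$; $Z^{*}$ is then a fixed, explicitly estimated correction restoring the homogeneous Dirichlet condition and removing the slowest--decaying bulk error term, and one checks it is negligible on the bubble scale because $\la\ll d$ along the sought parameters. I would look for the solution as $u=u_0+\phi$ with $\phi=\phi_{\rm out}+\eta\,\phi_{\rm in}$, $\eta$ a cut--off equal to $1$ near $\Gamma$ and supported slightly larger, $\phi_{\rm in}$ expressed in the stretched variable $y=(x-\xi)/\la$, and $\phi_{\rm out}$ defined on all of $\DD$.

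Plugging in, $\phi$ must solve $\pp_t\phi=\Delta\phi+\tfrac1{x_1}\pp_{x_1}\phi+pU_{\la,\xi}^{p-1}\phi+E+N(\phi)$, where $E=S[u_0]$, $S[u]:=-\pp_tu+\Delta u+\tfrac1{x_1}\pp_{x_1}u+u^{p}$, and $N$ is superlinear. The next step is to expand $E$ in the inner region; after rescaling one finds
\[
\la^{\frac n2}E=\dot\la\,Z_0+\dot d\,Z_1+\frac{1}{1+d}\,Z_1-c_n\,\frac{\la^{\,n-3}}{d^{\,n-2}}\,U^{p-1}+\mathcal R,\qquad c_n>0,
\]
with $Z_0=\tfrac{n-2}{2}U+y\cdot\nn U$, $Z_j=\pp_{y_j}U$, and $\mathcal R$ of strictly lower order on the space--time scales produced below. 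Here the first two terms come from $-\pp_tU_{\la,\xi}$; the third is the leading part of the geometric drift $\tfrac1{x_1}\pp_{x_1}U_{\la,\xi}$ near $x_1=1$, the ``$1$'' being the curvature of $\Gamma$ normalized to $1$ by $\inf\{r:(r,\bar 0)\in\DD\}=1$; the fourth is the leading interaction with the mirror bubble at mutual distance $2d$. Since the inner linear problem is solvable only when its right--hand side is $L^2_y$--orthogonal to $Z_0,\dots,Z_n$, I would impose this on $\la^{\frac n2}E$ (up to the corrections carried by $\phi_{\rm in}$); using parity and $\int_{\RR^n}U^{p-1}Z_1=0$, $\int_{\RR^n}U^{p-1}Z_0<0$, this produces the modulation system
\[
\dot d=-\frac{1}{1+d}+\cdots,\qquad \dot\la=-a_n\,\frac{\la^{\,n-3}}{d^{\,n-2}}+\cdots,\qquad a_n>0 ,
\]
whose leading order I would solve explicitly: the first equation gives $d(t)=(T-t)(1+o(1))$, and inserting this into the second and integrating the separable equation gives $\la(t)=(T-t)^{\frac{n-3}{n-4}}(1+o(1))$ (up to the leading constant), as in \eqref{points}. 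This is precisely where the negative curvature of $\pp\Omega$ along $\Gamma$ enters: it makes $\dot d<0$, driving the bubble toward the boundary so that $\la\to0$, whereas on a convex part the drift would repel the bubble and no such blow--up could occur.

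With the parameters chosen, the problem decouples into an inner equation
\[
\la^{2}\pp_t\phi_{\rm in}=\Delta_y\phi_{\rm in}+pU^{p-1}\phi_{\rm in}+\la^{\frac n2}\bigl(E+pU_{\la,\xi}^{p-1}\phi_{\rm out}\bigr)\quad\text{on }\{|y|\le\rho/\la\}\times(0,T),
\]
an outer equation for $\phi_{\rm out}$ on $\DD\times(0,T)$ with zero boundary data, source $(1-\eta)E$ together with the commutator and potential terms generated by $\eta\,\phi_{\rm in}$, and the modulation ODEs above. The outer problem I would treat by parabolic regularity and the maximum principle for $\pp_t-\Delta-\tfrac1{x_1}\pp_{x_1}$ in weighted $L^{\infty}$ norms with a fixed spatial weight off the bubble and a mild temporal weight; since all sources concentrate near $\Gamma$ at scale $\la$ with vanishing total mass, $\phi_{\rm out}$ stays uniformly bounded, giving the $O(1)$ term in Theorem \ref{teo}. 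The inner problem rests on the linear theory for $L_0=\Delta+pU^{p-1}$ on $\RR^n$: under orthogonality to $Z_0,\dots,Z_n$ and a right--hand side decaying like $\tau^{-\sigma}(1+|y|)^{-\mu}$, $2<\mu<n-2$, the Cauchy problem $\pp_\tau\phi=L_0\phi+h$ has a solution obeying $|\phi|\lesssim\tau^{-\sigma}(1+|y|)^{-\mu+2}$, $|\nn\phi|$ one power better, again orthogonal to the $Z_j$; the unique positive eigenvalue of $L_0$, whose eigenfunction is radial, has to be absorbed through the scaling modulation, together with, if needed, a finite--dimensional adjustment of the data at $t=0$. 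Granting all this, I would close the construction by a contraction--mapping argument in the product of the two weighted spaces and the space of pairs $(\la,d)$ with the prescribed asymptotics, smallness of $T$ making all error and nonlinear contributions strictly subleading. Positivity of $u$ would then follow from the maximum principle ($u$ being a genuine positive bubble near $\Gamma$ plus a bounded, controlled remainder), smoothness from parabolic regularity, and undoing the cylindrical change of variables returns a solution $v$ of \eqref{P2} with the blow--up rate of Theorem \ref{teo1}.

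I expect the main obstacle to be the inner linear theory: obtaining the sharp space--time decay estimates for $\pp_\tau-L_0$ that are compatible with the slow Type II rate $\la(t)\sim(T-t)^{\frac{n-3}{n-4}}$, controlling the unstable mode of $L_0$, and verifying that every ``$\cdots$'' in the modulation system is genuinely subleading on exactly these scales --- which is where the dimension restriction $d\ge7$ and the negative--curvature hypothesis get used. The remaining ingredients --- the outer estimates, closing the fixed point, and positivity --- should be comparatively routine.
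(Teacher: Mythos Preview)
Your proposal is correct and follows essentially the same inner--outer gluing scheme as the paper: the same mirror-bubble ansatz, the same derivation of the modulation ODEs from orthogonality to $Z_0,Z_1$, and the same identification of the inner linear theory for $\partial_\tau-L_0$ as the main difficulty. One clarification worth making now: the unstable radial eigenmode $Z$ of $L_0$ is \emph{not} absorbed by the scaling modulation (that handles the kernel element $Z_0$); in the paper it is dealt with exactly by the codimension-one choice of inner initial data $\phi(\cdot,\tau_0)=e_0 Z$ that you allude to, with $e_0=e_0[h]$ determined a posteriori from the right-hand side, and the inner linear estimate carries a large prefactor $R^{n+1-a}$ that must be beaten by the smallness of $T$ when closing the contraction.
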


\medskip
We find the solution to Problem \eqref{pp} as predicted by Theorem \ref{teo} by constructing a sufficiently accurate approximation, and then an actual solution to the Problem as a {\it small} perturbation which is subtle to use in particular by the structure instability of the problem.
 Our solution has  the form
\begin{equation}\label{sol1}
u(x,t) = W_2  (x,t) + {\bf w} (x,t)
\end{equation}
where $W_2$ is an explicit approximation whose expression
 encodes the predicted asymptotic behavior as $t\to T$. Here ${\bf w}$ is a small correction in some appropriate topology.

\medskip
In the rest of this section we describe $W_2(x,t)$ and the method of construction of an actual solution near $W_2$ which we call {\it the inner-outer gluing method}.

\medskip
{\it Construction of the approximation $W_2(x,t)$ }. \ \ We
introduce two scalar functions $d$, $\lambda : (0,T) \to \R$, expressed respectively as
\begin{equation}\label{def1}
d(t) = d_0 (t) + d_1 (t) , \quad \lambda (t) = \lambda_0 (t) + \lambda_1 (t) ,
\end{equation}
where $d_0$ and $\la_0$ are explicitly given by
\begin{equation}\label{def2}
d_0 (t) = (T-t) , \quad \lambda_0 (t) = \ell (T-t)^{1+{1\over n-4}}
\end{equation}
with $\ell$ a positive constant that we will define later.
The functions $\la_1 $ and $d_1$ are thought as parameter functions to be determined. For the moment, we assume that $\la_1$ and $d_1$ are controlled by  $\la_0$ and $d_0$ in the whole interval $(0,T)$, in the following sense. For  any scalar function $h(t)$, $t \in (0, T)$, and any real number $\delta $,  $\| h \|_\delta$  stands for the weighted $L^\infty$-norm defined as
\begin{equation}\label{norma}
\| h \|_\delta = \| (T-t)^{-\delta} h(t) \|_{L^\infty (0,T) }.
\end{equation}
We assume that
\begin{equation}\label{n1}\begin{split}
\la_1 (t) := \int_t^T & \dot \la_1 (s) \, ds, \quad d_1 (t) := \int_t^T \dot d_1 (s) \, ds, \quad {\mbox {with}} \quad\\
 & \| \dot d_1 \|_{1+\sigma \over n-4} + \| \dot \la_1 \|_{1+\sigma \over n-4} \lesssim 1,
\end{split}
\end{equation}
where $\sigma $ is fixed in the range
\begin{equation}\label{sigmadef}
\sigma \in ({1\over 2},1).
\end{equation}
In fact, we can think at $\sigma$ as close to $1$.
The final time $T>0$ will be chosen to be small enough so that $d(t) > 0$, and $\la (t) >0$, in the whole interval $t \in (0, T)$.

\medskip
As in the  statement of Theorem \ref{teo}, we proceed with the first step in the construction of $W_2$ in \eqref{sol1} and we  introduce
\begin{equation}\label{appro1}
W_1 [\la_1 , d_1] (x,t) = W_0 (x,t) - \bar W_0 (x,t)
\end{equation}
where
$$
W_0 (x,t) =\la^{-{n-2 \over 2}}  (t)  U\left( {x-\xi (t) \over \la (t)} \right) , \quad \bar W_0 (x,t) = \la^{-{n-2 \over 2}}  (t) U \left( {x-\hat \xi (t) \over \la (t) } \right),
$$
with $U$ given by \eqref{bubble}, and the points $\xi$, $\hat \xi$  described in \eqref{points}. We recall that $U$ solves
\begin{equation}\label{bubbleeq}
\Delta u + u^{n+2 \over n-2} = 0 , \quad {\mbox {in}} \quad \R^n.
\end{equation}

\medskip
Since $d(t) >0$ for all $t \in (0, T)$, we see that $\xi (t) \in \DD$, $\hat \xi (t) \not\in \DD$, for any $t \in (0,T)$. In fact,  since $(1, \bar 0 ) \in \partial \DD$, the point $\hat \xi$ is the reflection of $\xi$ through the boundary. In other words, ${\xi (t) + \hat \xi(t) \over 2} = (1, \bar 0)$.   The radial symmetry of $U$ implies that $W[\la_1 , d_1] (1,\bar 0) =0$, for any possible election of $\la_1 $ and $d_1$.

\medskip
The way to establish whether $W_1$ is a good approximation is to measure the size of the error $S[W_1] (x,t)$, where
$$
S[u] (x,t) = -u_t + \Delta u + {1\over x_1} {\partial u \over \partial x_1} + u^p.
$$
Formally, one sees that, locally around  a small neighborhood of $\xi$, the error $S[W_1]$ looks like, in the expanded variable $ y ={ x-\xi \over \la}$,
\begin{equation}
\begin{split}\label{vvn}
S[W_1] (\xi + \la y ,t) &\sim \la^{-{n\over 2}} \left[ \dot d +{1\over 1+ d +\la y_1 } \right] Z_1 \left(y  \right) \quad  \quad \quad \\
&+ \la^{-{n\over 2}} \dot \la Z_0 \left( y \right) - {p \alpha_n \over 2^{n-2}}  \la^{-{n+2 \over 2}} \, \left({\la \over d} \right)^{n-2} \,  U^{p-1} (y) .
\end{split}
\end{equation}
We refer to \eqref{bubble} for the definition of the constant $\alpha_n$ and to  \eqref{ee1} for the precise expression of $S[W_1]$.

The functions $Z_1$ and $Z_0$ that appear in \eqref{vvn} are
\begin{equation}\label{elk}
Z_0 (y ) = {n-2 \over 2} U(y) + \nabla U(y) \cdot y , \quad Z_1 (y) = {\partial U \over \partial y_1} (y),
\end{equation}
and they are the only bounded solutions to the linearized equation of \eqref{bubbleeq} around $U$
\begin{equation}\label{L0}
L_0 (\phi ) := \Delta \phi + p U^{p-1} \phi = 0 , \quad {\mbox {in}} \quad \R^n
\end{equation}
in the class of functions that are even in the variable $y_j$, for $j=2, \ldots , n$.

\medskip
The definition of $d_0 = d_0 (t)$ in \eqref{def1}-\eqref{def2} makes the biggest part of the function inside brackets in the first term in \eqref{vvn} at the point $y=0$ equals to zero, since
$$
\dot d_0 (t) + 1 = 0, \quad t \in (0,T), \quad d_0 (T )=0.
$$

\medskip
With this choice of $d_0$, the definition of $\la_0 $ in \eqref{def1}-\eqref{def2} makes the integration of the second and third terms in \eqref{vvn} against $Z_0$ in $\R^n$ equals to zero. Indeed, $\la_0$ is the solution to the ordinary differential equation
\begin{equation}\label{defla0}
\la_0 \dot \la_0 \left( \int_{\R^n} Z_0^2 (y) \, dy \right) - {p\, \alpha_n \over 2^{n-2} } \left( {\la_0 \over d_0} \right)^{n-2} \left( \int_{\R^n} U^{p-1} (y) \, Z_0 (y) \, dy \right) = 0,
\end{equation}
with $\la_0 (T) = 0$,
provided the number $\ell $ in \eqref{def2} is given by
\begin{equation}\label{def3}
\ell = \left[ {n-3 \over n-4} {2^{n-1} \over \alpha_n (n-2)} {\int_{\R^n} Z_0^2 (y) \, dy \over
\int_{\R^n} U^p (y) \, dy } \right]^{1\over n-4},
\end{equation}
since $-p \int_{\R^n} U^{p-1} (y) Z_0 (y) \, dy = {n-2 \over 2} \int_{\R^n} U^p (y) \, dy.$

\medskip
A rigorous description of the error $S[W_1]$ in a region close to $\xi$  is contained in Lemma \ref{lemma1} in  Section \ref{tre}. The main part of $S[W_1]$ turns out to be an explicit function of $(x,t)$, independent of $d_1$ and $\la_1$. It is thus easy to correct $W_1$ to cancel the biggest part of the error, so we end up with a final approximation we called $W_2 = W_2 [\la_1 , d_1]$. The description of the second error $S[W_2]$ in a region close to the point $\xi$ is contained in Lemma \ref{lemma2},  while the description of part of the error $S[W_2]$ far from $\xi$ is estimated in Lemma \ref{lemma3}. In Lemma \ref{lemma3}, we also provide a description of $W_2$ on the boundary $\partial \Omega$, which unfortunately is not identically zero. The correction of the boundary term and the construction of an actual solution to the equation is done in the second step of our argument, through the {\it inner-outer gluing method}.

\medskip
{\it Inner-outer gluing method}. \ \ This method is a procedure to find the function ${\bf w}$ in \eqref{sol1}.
We expect that the function ${\bf w}$ corrects the approximation $W_2$ in a region far from the point $\xi$, adjusting of course the boundary conditions, and at the same time in a region close to $\xi$.

\medskip
To organize this double role for ${\bf w}$, we introduce  a smooth cut-off function $\eta $ with $\eta(s) =1$ for $s<1$ and $=0$ for $s>2$,
and we define
\begin{equation}\label{cutoff}
\eta_R (x,t) = \eta \left( {|x-\xi| \over R \la} \right).
\end{equation}
We will take the radius $R$ a large number, independent of $t$, but dependent on $T$.
We write
\begin{equation}\label{defw}
{\bf w} (x,t) = \psi (x,t) + \eta_R (x,t)  \Phi (x,t).
\end{equation}
In this decomposition, the term $\psi$ is mainly influenced from the region far from $\xi$, while $\Phi$ reflects what is going on close to $\xi$.

\medskip
In order that $u= u(x,t)$ defined in \eqref{sol1} to be an actual solution to problem \eqref{pp},   the function ${\bf w}$ has to satisfy
\begin{equation}\label{s1} \begin{split}
{\bf w}_t&= \Delta {\bf w} + p W_2^{p-1} {\bf w} + {1\over x_1} {\partial {\bf w} \over \partial x_1} + S[W_2] (x,t) + N ({\bf w}) , \quad (x,t) \in \DD \times (0,T)\\
{\bf w}&= - W_2 , \quad (x,t) \in \partial \DD \times (0,T).
\end{split}
\end{equation}
where
\begin{equation}\label{s2}
N({\bf w}) = (W_2 + {\bf w} )^p - W_2^p - p W_2^{p-1} {\bf w}.
\end{equation}
Thanks to \eqref{defw}, we proceed to decompose problem \eqref{s1} into an {\it outer} and a {\it inner} problem.

\medskip
Let $R' >R$ so that $\eta_{R'} \eta_R = \eta_R$. The equation in \eqref{s1} is written explicitly in terms of $\psi$ and $\Phi$ as follows
\begin{equation*}\begin{split}
\psi_t &+ \underline{\eta_R \Phi_t} + (\eta_{R})_t \Phi = \Delta \psi + {1\over x_1} {\partial \psi \over \partial x_1}  +\underline{\eta_R  ( \Delta \Phi + {1\over x_1} {\partial \Phi \over \partial x_1}  )} \\
&+ 2 \nabla \eta_R \nabla \Phi + \Delta \eta_R \Phi + {1\over x_1} {\partial \eta_R \over \partial x_1}  \Phi\\
&+ \underline{ p \left[ \la^{-{n-2 \over 2}} U ({x-\xi \over \la} ) \right]^{p-1} \eta_{R'} \eta_R (\psi + \Phi)} \\
&+ p \left[ \la^{-{n-2 \over 2}} U ({x-\xi \over \la} ) \right]^{p-1} \eta_{R'}  (1-\eta_R) \psi \\
&+ p \left[ W_2^{p-1} - [ \la^{-{n-2 \over 2}} U ({x-\xi \over \la} ) ]^{p-1} \right] \eta_{R'} (\psi + \eta_R \Phi)\\
&+ p W_2^{p-1} (1-\eta_{R'} ) (\psi + \eta_R \Phi) + N [{\bf w} ] + \underbrace{\bar E_2+\underline{E_2 \eta_R }}_{:= S[W_2]}.
\end{split}
\end{equation*}
Here we have decomposed the error $S[W_2]$ into its principal part $E_2$ multiplied by the cut off $\eta_R$,
\begin{equation}\label{gia}
S[W_2] = E_2 \eta_R + \bar E_2
\end{equation}
leaving all the rest into a term named $\bar E_2$.
Observe that the terms which are underlined all go with the cut off function $\eta_R $ in front.
Define
\begin{equation}\label{defV}\begin{split}
V(x,t)&=  p \left[ \la^{-{n-2 \over 2}} U ({x-\xi \over \la} ) \right]^{p-1} \eta_{R'}  (1-\eta_R)\\
&+p \left[ W_2^{p-1} - [ \la^{-{n-2 \over 2}} U ({x-\xi \over \la} ) ]^{p-1} \right] \eta_{R'} \\
& +p W_2^{p-1} (1-\eta_{R'} ) .
\end{split}
\end{equation}
Then we observe that ${\bf w}$ defined in \eqref{defw} solves \eqref{s1} if the pair $(\psi , \Phi)$ solve the following system of coupled equations
\begin{equation}\label{sout}\begin{split}
\psi_t &= \Delta \psi  + {1\over x_1} {\partial \psi \over \partial x_1}  + V \psi + \left( \Delta -{\partial \over \partial t} \right) \eta_R \, \Phi + 2\nabla \Phi \nabla \eta_R  + {1\over x_1} {\partial \eta_R \over \partial x_1}  \Phi\\
&+p \left[ W_2^{p-1} - [ \la^{-{n-2 \over 2}} U ({x-\xi \over \la } ) ]^{p-1} \right] \eta_{R'} \eta_R \Phi\\
&+ N [{\bf w} ] + \bar E_2 \quad {\mbox {in}} \quad \DD \times (0,T)\\
\psi &= -W_2 , \quad {\mbox {on}} \quad \partial \DD \times (0,T),
\end{split}
\end{equation}
and
\begin{equation}\label{sin} \begin{split}
\Phi_t &= \Delta \Phi + p \left[ \la^{-{n-2 \over 2}} U ({x-\xi \over \la } ) \right]^{p-1}   ( \Phi + \psi ) + {1\over x_1} {\partial \Phi \over \partial x_1} \\
&+E_2  \quad {\mbox {in}} \quad B(\xi , 2R\la ) \times (0,T).
\end{split}
\end{equation}
Problem \eqref{sout} is referred to as the {\it outer problem}: $\psi$ adjusts the boundary conditions, and takes care of the part of the error far from the concentration point $\xi$.

Problem \eqref{sin} is referred to as the {\it inner problem}: $\Phi$ adjusts the error close to $\xi$.

\medskip
\medskip
To solve the outer and inner problems \eqref{sout} and \eqref{sin}, we  proceed as follows. For given parameters $\la, d  $ and  functions $\Phi$ fixed  in a suitable range,  we solve for $\psi$ Problem \equ{sout}, for any small and smooth initial condition $\psi_0 (x)$, in the form of a (nonlocal) operator $\psi =  \Psi(\la,d,  \Phi)$, provided the radius $R$ in \eqref{cutoff} is large enough and the final time $T$ is small enough. We  solve it developing a linear theory for an operator which resembles the characteristics of the heat equation. This is done in full details in Section \ref{outer}.

We then replace the $\psi$ we found into the inner problem  \equ{sin}.
In order to get a cleaner expression for problem \eqref{sin},
 it is convenient to perform two changes of variable for the function $\Phi$.  First, we perform a change of variable in the {\it space} variable, by setting
\begin{equation}\label{defPhi}
 \Phi (x,t) = \la^{-{n-2 \over 2}} \phi \left( {x-\xi \over \la } , t\right).
\end{equation}
In terms of $\phi$, equation \eqref{sin} gets the form
\begin{equation}\label{sin1} \begin{split}
\la^2 \phi_t &= L_0 (\phi)  + p \la^{n-2 \over 2} U^{p-1} \psi (\la y + \xi , t) +\la^{n+2 \over 2} E_2 (\la_0 y + \xi , t)  \\ &+ B[ \phi] \quad {\mbox {in}} \quad B(0 , 2R ) \times (0,T),
\end{split}
\end{equation}
where $L_0$ is the linearized equation associated to the bubble $U$, introduced in \eqref{L0}, that we recall
$L_0 (\phi) =\Delta \phi + p U^{p-1}   \phi$, and
\begin{equation}\label{defB}
B[\phi ] = \la \dot \la \left[ {n-2 \over 2} \phi (y, t) + \nabla \phi(y, t) \cdot y \right] + \left[ \la \dot d + {\la \over \la y_1 + \xi } \right] {\partial \phi \over \partial
 y_1} (y, t)
\end{equation}
A second change of variable, in the {\it time} variable, is to define
\begin{equation}\label{deftau}
{dt \over d\tau } = \la^2 (t), \quad \tau (t) = {n-4 \over (n-2) \ell } (T-t)^{-1-{2\over n-4}} \left(1+ o(1)\right) , \quad {\mbox {as}} \quad t \to T
\end{equation}
where $\ell$ is the constant defined in \eqref{def3}. With this change in the time variable, equation \eqref{sin1} becomes
\begin{equation}\label{sin2} \begin{split}
 \phi_{\tau} &= \Delta \phi + p U^{p-1}   \phi   + H[\la , d , \phi, \psi] (y,\tau ) \quad {\mbox {in}} \quad B(0, 2R ) \times (\tau_0 , \infty),
\end{split}
\end{equation}
for $\tau_0 =\tau (0) $ and
\begin{equation}\label{defH}
\begin{split}
H[\la , d , \phi, \psi] (y,\tau ) &= p \la^{n-2 \over 2} U^{p-1} \psi (\la y + \xi , t (\tau )) +\la^{n+2 \over 2} E_2 (\la y + \xi , t (\tau )) \\
  &+ B[ \phi]
\end{split}
\end{equation}
Let us discuss how we treat Problem \eqref{sin2}. The linear operator $L_1 (\phi ):= -\phi_\tau + L_0 (\phi)$ is certainly not invertible, being all $\tau$-independent elements of the kernel of $L_0$ also elements of the kernel of $L_1$. Thus, for solvability, one expects some orthogonality conditions to hold. Not only this. The solution $\phi$ we look for cannot grow exponentially in time. Recall that
$L_0$ has a
 positive radially symmetric bounded eigenfunction $Z$
associated to the only negative eigenvalue $\mu_0$ to the problem
\begin{equation}
\label{eigen0}
L_0( \phi ) + \mu \phi = 0 , \quad  \phi \in L^\infty(\R^n).
\end{equation}
It is known that $\mu_0 $ is a simple eigenvalue and   that $Z$
 decays like
$$Z (y) \sim  |y|^{-\frac{n-1}2} e^{-\sqrt{|\mu_0 |}\,  |y|}  \ass |y| \to \infty. $$
To avoid exponential grow in time due to this instability,
we  construct a solution to \eqref{sin2} in the class of functions that are parallel to $Z$ in the initial time $\tau_0$.

\medskip
To be more precise, we can  construct a solution to the initial value problem
\begin{equation}\label{sin3} \begin{split}
 \phi_{\tau} &= \Delta \phi + p U^{p-1}   \phi +  H [\la , d, \phi, \psi] (y,\tau )
   \quad {\mbox {in}} \quad B(0, 2R ) \times (\tau_0 , \infty), \quad \\
   \phi(y,\tau_0) &= e_0Z(y)  \inn B (0,2R),
\end{split}
\end{equation}
for some constant $e_0$.
While no boundary conditions are specified, we shall request suitable time-space decay rates and, as already mentioned, some orthogonality conditions  on the right-hand side $H[\la, d , \phi, \psi]$.
In other words,  one has solvability for \eqref{sin3} provided that
the following orthogonality conditions
\begin{equation}\label{uffa1}
\int H[\la  , d , \phi , \psi]  Z_i (y) \, dy = 0 , \quad i=0,1 \quad \forall t
\end{equation}
are fulfilled.  It is at this point that
we choose the parameters $\la$ and $d$ (as functions of the given $\phi$) in such a way that these orthogonality   conditions are satisfied.  This is done in Section \ref{parfun}, for any $R$ (see \eqref{cutoff}) large enough, and any final time $T$ small enough.

In Sections \ref{inner} we solve the inner problem \eqref{sin3}: it is at this point that we find that there exists $R$ sufficiently large for that, for any final time $T$ small enough (or equivalently $\tau_0$ large enough), the inner problem is solvable.
 We remark that the (small) initial condition required for  $\phi$ should lie on a certain manifold locally described as a translation of the hyperplane orthogonal to $Z(y)$. This constraint defines a {\em codimension $1$ manifold}  of initial conditions which describes those for which the expected asymptotic bubbling behavior is possible.

\medskip
In summary, the inner-outer gluing procedure allows us to show that: for any small and smooth initial condition $\psi_0$ for Problem \eqref{sout}, we find a solution $\psi$  to \eqref{sout}, $\la $, $d$ solutions to \eqref{uffa1}, and $\phi$ solution to \eqref{sin3}, with initial condition belonging to a $1$-codimensional space, so that
$W_2 (x,t) + {\bf w} (x,t) $ defined in \eqref{sol1}-\eqref{defw} is a solution to \eqref{pp} with the expected asymptotic bubbling behavior.

\medskip
The rest of the paper is devoted to prove rigorously what we have described so far.

\section{Construction of a first approximation} \label{tre}

We start with the description of the error function associated to the first approximation $W_1$, introduced in \eqref{appro1}.
We recall the definition of the error function
$$
S[u] (x,t) = -u_t + \Delta u + {1\over x_1} {\partial u \over \partial x_1} + u^p.
$$
A direct computation gives
\begin{equation}\label{ee1}
\begin{split}
S[W_1] (x,t) &=\underbrace{\la^{-{n\over 2}} \left[ \dot d +{1\over x_1} \right] Z_1 \left({x-\xi \over \la} \right)}_{:=e_1 (x,t)} \\
&+ \underbrace{\la^{-{n\over 2}} \dot \la Z_0 \left( {x-\xi \over \la } \right) - p W_0^{p-1} \bar W_0 }_{:= e_2 (x,t)}\\
&+\underbrace{\la^{-{n\over 2}} \left[ \dot d - {1\over x_1} \right] Z_1 \left( {x-\hat \xi \over \la} \right) -\la^{-{n\over 2}} \dot \la Z_0 \left( {x-\hat \xi \over \la} \right)}_{:=e_3 (x,t)} \\
&+\underbrace{ \bar W_0^p + (W_0 - \bar W_0 )^p - W_0^p + p W_0^{p-1} \bar W_0}_{:=e_4 (x,t)}.
\end{split}
\end{equation}

\medskip
We shall see that the main parts of the error function $S[W_1](x,t)$ are contained in the terms $e_1 $ and $e_2$. Observe also that
 the term $e_4$ depends only on $\la_1 $ and $d_1$, but it does not depend on $\dot \la_1$, nor on $\dot d_1$, while the term $e_3$ depends on all parameter functions $\la_1 $, $d_1$, $\dot \la_1$ and $\dot d_1$.

Next Lemma contains a description of the error function $S[W_1] (x,t )$ in a region close to $\xi$.

\medskip

\begin{lemma}\label{lemma1}
Assume the functions $\la_1$ and $d_1$ satisfy \eqref{n1}, and that $T$ is small.
Let $\delta >0$ be a small fixed number and $y = {x-\xi \over \la}$. In the region $|x-\xi |< \delta d$, the error of approximation $S[W_1] (x,t)$ can be described as
follows
\begin{equation}\label{app1}\begin{split}
\la^{n+2 \over 2} S[W_1] (x,t)& = E_0 (y,t) + E_\la [\la_1 , \dot \la_1 , d_1] (y,t) \\
&+ E_d [d_1 , \dot d_1 , \la_1 ] (y,t) + E[\la_1 , \dot \la_1 , d_1 , \dot d_1] (y,t)
\end{split}
\end{equation}
where
\begin{equation*}\begin{split}
E_0 (y,t) &=\la_0 \dot \la_0 Z_0 (y) - {p\, \alpha_n \over 2^{n-2} } \left({\la_0 \over d_0} \right)^{n-2} U^{p-1} (y),\\
E_\la  [\la_1 , \dot \la_1 , d_1] (y,t)& =(\la \dot \la_1 + \dot \la_0 \la_1 ) \, Z_0 (y) \\
&-{p \, (n-2) \, \alpha_n \over 2^{n-2}}  \left({\la_0 \over d_0} \right)^{n-2} \left[ {\la_1 \over \la_0} - {d_1 \over d_0}  \right] \left[ 1+ q_1 ({\la_1 \over \la_0} , {d_1 \over d_0} ) \right] U^{p-1} (y)\\
E_d [d_1 , \dot d_1 , \la_1] (y,t)& = \la \left[ \dot d_1 - {d_0 + d_1 + \la y_1 \over 1+ d + \la y_1} \right] Z_1 (y)\\
&+ {p \, (n-2) \, \alpha_n \over 2^{n-1}} \left({\la \over d} \right)^{n-1} U^{p-1} (y) \, y_1\\
E[\la_1 , \dot \la_1 , d_1 , \dot d_1] (y,t)&=\la \dot d_1 \left( {\la_0 \over d_0} \right)^{n-1} f(y, {\la_1 \over \la_0 }  , {d_1 \over d_0} ) - \la \dot \la_1 \left( {\la_0 \over d_0} \right)^{n-2} f(y, {\la_1 \over \la_0 }  , {d_1 \over d_0} )\\
&+ \left( {\la_0 \over d_0} \right)^{n+2} f(y, {\la_1 \over \la_0 }  , {d_1 \over d_0} ).
\end{split}
\end{equation*}
Here $f = f(y, {\la_1 \over \la_0 }  , {d_1 \over d_0} )$ denotes a generic function, which is smooth and bounded for $y$ in the considered region, and for $\la_1 $ and $d_1$ satisfying \eqref{n1}, whose expression changes from line to line. With $q_1$ we denote a generic smooth real function, with the property that $q_1 (0,0) = 0$, and $\nabla q_1 (0,0) \not= 0$.

\end{lemma}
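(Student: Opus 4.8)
The starting point is the exact identity \eqref{ee1}, $S[W_1]=e_1+e_2+e_3+e_4$, which follows from the chain rule applied to the rescaled bubbles together with $\Delta U+U^p=0$ and the facts $\dot\xi=\dot d\,{\bf e}_1$, $\dot{\hat\xi}=-\dot d\,{\bf e}_1$. The plan is to multiply this identity by $\la^{\frac{n+2}{2}}$ and to Taylor-expand each $e_j$ in the region $|x-\xi|<\delta d$ in the scaled variable $y=\frac{x-\xi}{\la}$, $|y|<\delta d/\la$. The only structural input needed is the geometry of the far bubble: since $\hat y:=\frac{x-\hat\xi}{\la}=y+\frac{2d}{\la}{\bf e}_1$ and $\la/d=o(1)$ by \eqref{def2} while $\delta$ is small, one has $(2-\delta)d/\la\le|\hat y|\le(2+\delta)d/\la$, so $|\hat y|\simeq d/\la\to\infty$ uniformly in the region, and consequently $U(\hat y)$, $Z_0(\hat y)$, $Z_1(\hat y)$, $U^p(\hat y)$ are each small of a definite order in $\la/d$.

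The leading terms come from $e_1$ and $e_2$. For $e_1$, writing $x_1=1+d+\la y_1$ and using $\dot d_0=-1$ gives the algebraic identity $\dot d+\frac{1}{x_1}=\dot d_1-\frac{d_0+d_1+\la y_1}{1+d+\la y_1}$, so $\la^{\frac{n+2}{2}}e_1$ is precisely the first term of $E_d$. For $e_2$, the exact splitting $\la\dot\la=\la_0\dot\la_0+(\la\dot\la_1+\dot\la_0\la_1)$ sends $\la\dot\la\,Z_0(y)$ into $E_0$ and $E_\la$. For the term $-pW_0^{p-1}\bar W_0$ one uses $W_0^{p-1}=\la^{-2}U^{p-1}(y)$ (since $(p-1)\frac{n-2}{2}=2$) and $\bar W_0=\la^{-\frac{n-2}{2}}U(\hat y)$, together with $1+|\hat y|^2=\frac{4d^2}{\la^2}\big[1+\frac{\la}{d}y_1+\frac{\la^2}{4d^2}(1+|y|^2)\big]$, to obtain $U(\hat y)=\alpha_n\big(\frac{\la}{2d}\big)^{n-2}\big[1-\frac{n-2}{2}\frac{\la}{d}y_1+O((\tfrac{\la}{d})^2)\big]$, hence
\[
-\la^{\frac{n+2}{2}}pW_0^{p-1}\bar W_0=-\frac{p\alpha_n}{2^{n-2}}\Big(\frac{\la}{d}\Big)^{n-2}U^{p-1}(y)+\frac{p(n-2)\alpha_n}{2^{n-1}}\Big(\frac{\la}{d}\Big)^{n-1}U^{p-1}(y)\,y_1+O\Big(\big(\tfrac{\la}{d}\big)^{n}\Big).
\]
Writing $\big(\frac{\la}{d}\big)^{n-2}=\big(\frac{\la_0}{d_0}\big)^{n-2}\big(\frac{1+\la_1/\la_0}{1+d_1/d_0}\big)^{n-2}$ and using the elementary factorization $\big(\frac{1+a}{1+b}\big)^{n-2}=1+(n-2)(a-b)\,[1+q_1(a,b)]$ with $q_1(0,0)=0$ and $\nabla q_1(0,0)\ne0$ (both read off from $\frac{(1+a)^{n-2}-(1+b)^{n-2}}{(a-b)(1+b)^{n-2}}$), the leading piece joins $E_0$, the $(\la_1,d_1)$-linear piece joins $E_\la$, the $\big(\frac{\la}{d}\big)^{n-1}y_1$-piece joins $E_d$, and the $O((\la/d)^{n})$ remainder joins $E$.

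What remains is to show that $e_3$, $e_4$ and the above remainders are of strictly higher order and can be collected into $E[\la_1,\dot\la_1,d_1,\dot d_1]$ with the three stated structural pieces. Since $|\hat y|\simeq d/\la$ is large, $Z_0(\hat y)=O((\la/d)^{n-2})$, $Z_1(\hat y)=-\frac{\alpha_n(n-2)}{2^{n-1}}\big(\frac{\la}{d}\big)^{n-1}(1+O(\tfrac{\la}{d}))$, $U^p(\hat y)=O((\la/d)^{n+2})$ (using $(n-2)p=n+2$), and the quadratic remainder $(W_0-\bar W_0)^p-W_0^p+pW_0^{p-1}\bar W_0=O(W_0^{p-2}\bar W_0^2)$ rescales, after multiplication by $\la^{\frac{n+2}{2}}$, to a bounded multiple of $U^{p-2}(y)U^2(\hat y)+\cdots$ because $\frac{n+2}{2}-(p-2)\frac{n-2}{2}-(n-2)=0$; using $U^{p-2}(y)\lesssim(1+|y|)^{(n-6)_+}\lesssim(d/\la)^{(n-6)_+}$ and $U^2(\hat y)\lesssim(\la/d)^{2(n-2)}$ in the region yields the pointwise bound $\lesssim(\la/d)^{n+2}$. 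Hence $\la^{\frac{n+2}{2}}e_3=O\big(\la(\la/d)^{n-1}\big)+O\big(\la|\dot\la|(\la/d)^{n-2}\big)$ and $\la^{\frac{n+2}{2}}e_4=O((\la/d)^{n+2})$. From \eqref{def2}, $\la_0(\la_0/d_0)^{n-1}\simeq(T-t)^{\frac{2n-4}{n-4}}\simeq\la_0|\dot\la_0|(\la_0/d_0)^{n-2}$ while $(\la_0/d_0)^{n+2}\simeq(T-t)^{\frac{n+2}{n-4}}$, and $\frac{2n-4}{n-4}\ge\frac{n+2}{n-4}$ exactly when $n\ge6$, i.e. $d\ge7$. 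Separating off the $\dot d_1$- and $\dot\la_1$-dependent factors produced by the splittings $\dot d=-1+\dot d_1$, $\dot\la=\dot\la_0+\dot\la_1$ (which, by \eqref{n1}, carry an extra small factor) produces exactly the three summands of $E$, with $f$ smooth and bounded on the parameter set \eqref{n1} and in $y$ over the region by the above estimates. Collecting the pure-$\la_0$ terms gives $E_0$, the $(\la_1,d_1)$-linear terms at order $(\la_0/d_0)^{n-2}$ plus $(\la\dot\la_1+\dot\la_0\la_1)Z_0(y)$ give $E_\la$, and the $Z_1$-terms together with the order-$(\la/d)^{n-1}$ term give $E_d$.

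The main obstacle is precisely this last step: verifying that every contribution of the far bubble --- from $e_3$, from $\bar W_0^p$ and from the quadratic Taylor remainder in $e_4$ --- is genuinely dominated by the pieces kept in $E_0,E_\la,E_d$. This is exactly where the hypothesis $d\ge7$ enters, through $\tfrac{2n-4}{n-4}\ge\tfrac{n+2}{n-4}\Leftrightarrow n\ge6$. A subsidiary point requiring care is that $U^{p-2}(y)$ grows like $|y|^{n-6}$ when $n>6$, so one must track that $|\hat y|$ stays comparable to $d/\la$ throughout the region $|y|<\delta d/\la$ in order to beat this growth by the decay of $U^2(\hat y)$. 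Everything else reduces to long but routine Taylor expansions of the explicit functions $U$, $Z_0$, $Z_1$.
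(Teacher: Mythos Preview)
Your proposal is correct and follows essentially the same approach as the paper's own proof: both start from the exact decomposition $S[W_1]=e_1+e_2+e_3+e_4$, handle $e_1$ via $\dot d_0=-1$, split $\la\dot\la=\la_0\dot\la_0+(\la\dot\la_1+\dot\la_0\la_1)$ and Taylor-expand $U(\hat y)$ for $e_2$, and use the far-bubble asymptotics $|\hat y|\simeq 2d/\la$ to reduce $e_3$, $e_4$ to the remainder $E$ via the comparison $2(n-2)\ge n+2$ for $n\ge 6$. Your write-up is in fact slightly more explicit than the paper's in two places --- the algebraic origin of $q_1$ from $\big(\tfrac{1+a}{1+b}\big)^{n-2}$, and the care taken with the growth of $U^{p-2}(y)\sim|y|^{n-6}$ against the decay $U^2(\hat y)\lesssim(\la/d)^{2(n-2)}$ in the quadratic Taylor remainder --- but these only spell out what the paper packages into its separate $n=6$ versus $n\ge7$ cases.
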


\medskip
\begin{remark}
 A close look at the proof of Lemma \ref{lemma1} shows that the three functions $E_0$, $E_\la$ and $E_d$ originate from the terms $e_1$ and $e_2$ in \eqref{ee1}, which, as already mentioned, are the main terms of $S[W_1]$.
\end{remark}

\medskip

\begin{proof}
Let $\delta >0$ be a small fixed number. To analyze $S[W_1] (x,t)$ in the region $|x-\xi | < \delta d$, we introduce the variable
$y= {x-\xi \over \la}$ and we define
$$
E_1 (y,t ) = \la^{n+2 \over 2} S[W_1] (\xi + \la y ,t).
$$
With abuse of notation, we will write $e_j (y,t) = e_j (\xi + \la y ,t)$. The definition of $d_0$ in \eqref{def2} gives that $\dot d_0 + 1 = 0$ in $(0,T)$, which simplifies the first term $e_1$ as follows
\begin{equation}\label{le0}
\la^{n+2 \over 2} e_1 (y,t) = \la \left[ \dot d_1 - {d_0 + d_1 + \la y_1 \over 1+ d + \la y_1} \right] Z_1 (y).
\end{equation}
We refer to \eqref{elk} for the definition of $Z_1 (y)$.
Let us now describe $e_2$. In the region we are considering, $|y| < \delta {d \over \la}$, we observe that
$$
\la^{n-2 \over 2} \bar W_0 (\xi + \la y) = { \alpha_n \over 2^{n-2 }} \left( {\la \over d} \right)^{n-2} \left[ 1 - {n-2 \over 2} y_1 {\la \over d} +
O(1+ |y|^2 ) q_2 \left( {\la \over d} \right) \right]
$$
where $q_2$ denotes a smooth function with the properties that $q_2 (0) = q_2' (0) = 0$, $q_2'' (0) \not= 0$. With this in mind, we get
\begin{equation}  \label{le1}\begin{split}
\la^{n+2 \over 2} e_2 (y,t) &= \la \dot \la Z_0 (y) - {p\, \alpha_n \over 2^{n-2}} \left({\la \over d} \right)^{n-2} U^{p-1} (y)\\
&+  {p \, (n-2) \, \alpha_n \over 2^{n-1}} \left({\la \over d} \right)^{n-1} U^{p-1} (y) \, y_1 + R [\la , d] (y,t) \left({\la \over d} \right)^{n}
\end{split}
\end{equation}
where $R[\la, d] (y,t)$ depends smoothly on $\la$ and $d$, it does not depend on $\dot \la $, nor on $\dot d$, and satisfies the uniform estimates
\begin{equation}\label{inter}
\left| R[\la , d] (y,t) \right| \leq {C \over 1+|y|^2},
\end{equation}
for some constant $C$, independent of $t$ and $T$.
Replacing \eqref{defla0} in \eqref{le1}, we can write
\begin{equation}\label{le2}\begin{split}
\la^{n+2 \over 2} e_2 (y,t) &= \la_0 \dot \la_0 Z_0 (y) - {p\, \alpha_n \over 2^{n-2} } \left({\la_0 \over d_0} \right)^{n-2} U^{p-1} (y) \\
&+ (\la \dot \la_1 + \dot \la_0 \la_1 ) \, Z_0 (y) \\
&-{p \, (n-2) \, \alpha_n \over 2^{n-2}}  \left({\la_0 \over d_0} \right)^{n-2} \left[ {\la_1 \over \la_0} - {d_1 \over d_0} \right] \left[ 1+ q_1 ({\la_1 \over \la_0} ) + q_1 ( {d_1 \over d_0} ) \right] U^{p-1} (y)\\
&+  {p \, (n-2) \, \alpha_n \over 2^{n-1}} \left({\la \over d} \right)^{n-1} U^{p-1} (y) \, y_1  + \left({\la \over d} \right)^{n} R[\la, d] (y,t)  ,
\end{split}
\end{equation}
where $R$ depends smoothly on $\la$ and $d$, it does not depend on $\dot \la $, nor on $\dot d$, and satisfies the uniform estimate \eqref{inter}.
In order to describe $\la^{n+2 \over 2} e_3 (y,t)$, we observe that in the region we are considering, we have
$$
Z_1 (y +2 {d \over \la } {\bf e}_1 ) = \left( {\la_0 \over d_0} \right)^{n-1} f(y, {\la_1 \over \la_0 }  , {d_1 \over d_0} )
$$
and
$$
Z_0 (y +2 {d \over \la } {\bf e}_1 ) = \left( {\la_0 \over d_0} \right)^{n-2} f(y, {\la_1 \over \la_0 }  , {d_1 \over d_0} )
$$
where $f = f(y, {\la_1 \over \la_0 }  , {d_1 \over d_0} )$ denotes a generic function, which is smooth and bounded, whose expression changes from line to line. So, we get
\begin{equation}\label{le3}\begin{split}
\la^{n+2 \over 2} e_3 (y,t) &= \la \left[ \dot d_1 - 2 -{d+\la y_1 \over 1+ d + \la y_1} \right] \left( {\la_0 \over d_0} \right)^{n-1} f(y, {\la_1 \over \la_0 }  , {d_1 \over d_0} )\\
&-\la [\dot \la_0 + \dot \la_1 ]\left( {\la_0 \over d_0} \right)^{n-2} f(y, {\la_1 \over \la_0 }  , {d_1 \over d_0} )\\
&= \la \dot d_1 \left( {\la_0 \over d_0} \right)^{n-1} f(y, {\la_1 \over \la_0 }  , {d_1 \over d_0} ) - \la \dot \la_1 \left( {\la_0 \over d_0} \right)^{n-2} f(y, {\la_1 \over \la_0 }  , {d_1 \over d_0} )\\
&+ \left( {\la_0 \over d_0} \right)^{2(n-2)} f(y, {\la_1 \over \la_0 }  , {d_1 \over d_0} ).
\end{split}
\end{equation}
We finally observe that, for $|x-\xi | < \delta d$, we have
$$
\la^{n+2 \over 2} \bar W_0^p (\xi + \la y , t) = \left( {\la_0 \over d_0} \right)^{n+2} f(y, {\la_1 \over \la_0 }  , {d_1 \over d_0} )
$$
and, for $n=6$
\begin{equation*}\begin{split}
\la^{n+2 \over 2} &\left[ (W_0 - \bar W_0 )^p - W_0^p + p W_0^{p-1} \bar W_0 \right] (\xi + \la y ,t) = \left( {\la_0 \over d_0} \right)^{2(n-2)}  f(y, {\la_1 \over \la_0 }  , {d_1 \over d_0} )
\end{split}
\end{equation*}
while for $n\geq 7$
\begin{equation*}\begin{split}
\la^{n+2 \over 2} \left[ (W_0 - \bar W_0 )^p - W_0^p + p W_0^{p-1} \bar W_0 \right] (\xi + \la y ,t)&= \left( {\la_0 \over d_0} \right)^{n+2} f(y, {\la_1 \over \la_0 }  , {d_1 \over d_0} ).
\end{split}
\end{equation*}
We thus conclude that
\begin{equation}\label{le4}
\la^{n+2 \over 2} e_4 (y,t) = \left( {\la_0 \over d_0} \right)^{n+2} f(y, {\la_1 \over \la_0 }  , {d_1 \over d_0} )
\end{equation}
where $f = f(y, {\la_1 \over \la_0 }  , {d_1 \over d_0} )$ is  a smooth bounded function.
Putting together \eqref{le0}-\eqref{le1}-\eqref{le2}-\eqref{le3}-\eqref{le4}, and using the fact that $n\geq 6$, we obtain
\eqref{app1}.
\end{proof}

Observe that the function $E_0$ in \eqref{app1} is an explicit function of $x$ and $t$, and it does not depend on the parameter functions $\la_1$, and $d_1$. It is convenient to slightly modify the approximate solution $W_1$, adding a correction that will eliminate the term $E_0$ in the error. To this purpose, we write
$$
E_0 (y,t) = \left({\la_0 \over d_0 } \right)^{n-2} \pi (y), \quad \pi (y) = {p\, \alpha_n \over 2^{n-2}} \left[ {\int_{\R^n} U^{p-1} (y) Z_0 (y) \, dy \over \int_{\R^n } Z_0^2 (y) \, dy } \, Z_0 (y) - U^{p-1} (y) \right].
$$
Let $h= h(y)$ be the radially symmetric, fast decaying solution to
$$
\Delta h + p U^{p-1} h = \pi, \quad {\mbox {in}} \quad \R^n,
$$
defined by the variation of parameters formula as follows. We denote by  $\tilde Z$ a radial solution to $\Delta \tilde Z + p U^{p-1} \tilde Z=0$
which is linearly independent to $Z_0$. One has that $\tilde Z (r) \sim r^{2-n}$ as $r\to 0$, while $\tilde Z (r) \sim 1$ as $r \to \infty$. Then $h$ is given by
$$
h(r) = c Z_0 (r) \int_0^r \tilde Z (s) \pi (s) s^{n-1} \, ds - c \tilde Z (r) \int_r^\infty Z_0 (s) \pi (s) s^{n-1} \, ds, \quad r= |y|,
$$
for some constant $c$. One sees that
\begin{equation}\label{es1}
h(|y|) = O (|y|^{-2} ) , \quad {\mbox {as}} \quad |y| \to \infty.
\end{equation}
Define
$$
w(x,t) = \la^{-{n-2 \over 2}} h \left( {x-\xi \over \la } \right), \quad {\mbox {and}} \quad
\bar w(x,t) = \la^{-{n-2 \over 2}} h \left( {x-\hat \xi \over \la } \right).
$$
Observe that
\begin{equation}\label{es2}
\Delta w + p \left( \la^{-{n-2 \over 2}} U\left( {x-\xi \over \la } \right) \right)^{p-1} w = \la^{-{n+2 \over 2}} \pi \left( {x-\xi \over \la } \right).
\end{equation}
A new approximate solution is defined to be
\begin{equation}\begin{split} \label{appro2}
W_2 [\la_1 , d_1] (x,t ) &= W_1 [\la_1 , d_1] (x,t) \\
 &-  \left({\la_0 \over d_0} \right)^{n-2} \underbrace{ \left[ w (x,t) - \bar w(x,t) \right] \, \eta \left( { |x-\xi| \over b \, d_0 } \right) }_{:= W(x,t)}, \end{split}
\end{equation}
where $W_1$ is the function defined in \eqref{appro1}, and  $\eta $ is the smooth cut-off function we see in \eqref{cutoff}, with $\eta(s) =1$ for $s<1$ and $=0$ for $s>2$. The number $b>0$ is chosen to be small and fixed in such a way that
 $\eta \left( { |x-\xi| \over b \, d_0 } \right) \equiv 0$ for any $x \in \partial \DD$, for any $t \in [0, T)$. Such a choice is possible
 thanks to \eqref{n1}, for any $T$ small.

 \medskip
 A direct computation gives that the new error function $S[W_2] (x,t)$ is given by
\begin{equation}\label{ee2}\begin{split}
S[W_2] (x,t) &= S[W_1] (x,t) - \left({\la_0 \over d_0} \right)^{n-2} \left[ \Delta w + p W_0^{p-1} w \right] \eta \left( { |x-\xi| \over b \, d_0 }  \right) \\
&+ e_5 (x,t) + e_6 (x,t)
\end{split}
\end{equation}
where
\begin{equation}\begin{split} \nonumber
e_5 (x,t) & = \left({\la_0 \over d_0} \right)^{n-2} W_t ,\\
e_6 (x,t)&= \left({\la_0 \over d_0} \right)^{n-2} \left[ \Delta \bar w + p W_0^{p-1} \bar w \right]\eta \left({ |x-\xi| \over b \, d_0 } \right) \\
&+\left({\la_0 \over d_0} \right)^{n-2} \, \left[ 2 \nabla (w -\bar w) \nabla \left( \eta ( { |x-\xi| \over b \, d_0 }) \right) + (w-\bar w) \Delta
\left( \eta ( { |x-\xi| \over b \, d_0 }) \right) \right] \\
&+ {d \over dt} \left[ \left({\la_0 \over d_0} \right)^{n-2} \right] W -\left({\la_0 \over d_0} \right)^{n-2} {1\over x_1} {\partial W \over \partial x_1} \\
&+ \left( W_1 - \left({\la_0 \over d_0} \right)^{n-2} W \right)^p - W_1^p + p \left({\la_0 \over d_0} \right)^{n-2} W_1^{p-1} W \\
&+ p \left({\la_0 \over d_0} \right)^{n-2} ( W_0^{p-1} - W_1^{p-1} ) W.
\end{split}
\end{equation}
Observe that the function $e_6$ depends only on $\la_1 $ and $d_1$, but it does not depend on $\dot \la_1$, nor on $\dot d_1$. On the other hand, $e_5$ depends on all $\la_1 $, $d_1$, $\dot \la_1$ and $\dot d_1$.

\medskip
Next Lemma contains a description of the error function $S[W_2] (x,t )$ in a region close to $\xi$. An immediate comparison between the expression of $S[W_1]$ in \eqref{app1} and the one of $S[W_2]$ in \eqref{app2} shows that the new approximate solution $W_2$ corrects the error $E_0$ in this region.

\medskip

\begin{lemma}\label{lemma2}
Assume the functions $\la_1$ and $d_1$ satisfy \eqref{n1}, and that $T$ is small.
Let $\delta >0$ be a small fixed number and $y = {x-\xi \over \la}$. In the region $|x-\xi |< \delta d$, the error of approximation $S[W_2] (x,t)$ is as
follows
\begin{equation}\label{app2}\begin{split}
\la^{n+2 \over 2} S[W_2] (x,t)& =  E_{2,\la} [\la_1 , \dot \la_1 , d_1] (y,t) \\
&+ E_{2,d} [d_1 , \dot d_1 , \la_1 ] (y,t) + E[\la_1 , \dot \la_1 , d_1 , \dot d_1] (y,t)
\end{split}
\end{equation}
where
\begin{equation*}\begin{split}
E_{2,\la}  [\la_1 , \dot \la_1 , d_1] (y,t)& =(\la \dot \la_1 + \dot \la_0 \la_1 ) \, Z_0 (y) \\
&- (\la \dot \la_1 + \dot \la_0 \la_1 )  \,  \left({\la_0 \over d_0} \right)^{n-2} \, \left( {n-2 \over 2} h(y) + \nabla h(y) \cdot y \right) \\
&-{p \, (n-2) \, \alpha_n \over 2^{n-2}}  \left({\la_0 \over d_0} \right)^{n-2} \left[ {\la_1 \over \la_0} - {d_1 \over d_0}  \right] \left[ 1+ q_1 ({\la_1 \over \la_0}, {d_1 \over d_0} ) \right] U^{p-1} (y)\\
E_{2,d} [d_1 , \dot d_1 , \la_1] (y,t)& = \la \left[ \dot d_1 - {d_0 + d_1 + \la y_1 \over 1+ d + \la y_1} \right] Z_1 (y)\\
&- \la \dot d_1 \,  \left({\la_0 \over d_0} \right)^{n-2} \, {\partial h \over \partial y_1} (y) \\
&+  {p \, (n-2) \, \alpha_n \over 2^{n-1}} \left({\la \over d} \right)^{n-1} U^{p-1} (y) \, y_1\\
E[\la_1 , \dot \la_1 , d_1 , \dot d_1] (y,t)&=\la \dot d_1 \left( {\la_0 \over d_0} \right)^{n-1} f(y, {\la_1 \over \la_0 }  , {d_1 \over d_0} ) - \la \dot \la_1 \left( {\la_0 \over d_0} \right)^{n-2} f(y, {\la_1 \over \la_0 }  , {d_1 \over d_0} )\\
&+ \left( {\la_0 \over d_0} \right)^{n+2} f(y, {\la_1 \over \la_0 }  , {d_1 \over d_0} ).
\end{split}
\end{equation*}
Here $f = f(y, {\la_1 \over \la_0 }  , {d_1 \over d_0} )$ denotes a generic function, which is smooth and bounded for $y$ in the considered region, and for $\la_1 $ and $d_1$ satisfying \eqref{n1}, whose expression changes from line to line. With $q_1$ we denote a generic smooth real function, with the property that $q_1 (0,0) = 0$, and $\nabla q_1 (0,0) \not= 0$.

\end{lemma}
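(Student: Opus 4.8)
The plan is to follow the same pattern as in the proof of Lemma~\ref{lemma1}, the only new inputs being the identity \eqref{ee2} that expresses $S[W_2]$ through $S[W_1]$, equation \eqref{es2} for the radial corrector $h$ (and its $\bar w$-analogue), and the decay \eqref{es1}. Throughout one works in the variable $y=(x-\xi)/\la$ on $|x-\xi|<\delta d$, with $\delta<b$ small enough that the cut-off $\eta(|x-\xi|/(bd_0))$ in \eqref{appro2} is identically $1$ there and all its derivatives vanish. \emph{Step 1 (cancellation of $E_0$).} Multiplying \eqref{ee2} by $\la^{\frac{n+2}{2}}$ and passing to the variable $y$, equation \eqref{es2} (with $\eta\equiv 1$) turns the term $-(\la_0/d_0)^{n-2}[\Delta w+pW_0^{p-1}w]\,\eta$ into exactly $-E_0(y,t)$, since by construction $E_0=(\la_0/d_0)^{n-2}\pi(y)$. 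Inserting the expansion \eqref{app1} of $\la^{\frac{n+2}{2}}S[W_1]$ from Lemma~\ref{lemma1}, the term $E_0$ is cancelled and one is left with $E_\la+E_d+E$ together with the rescaled contributions of $e_5$ and $e_6$.

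\emph{Step 2 (the term $e_5=(\la_0/d_0)^{n-2}W_t$).} Writing $W=(w-\bar w)\,\eta$, differentiating $w=\la^{-\frac{n-2}{2}}h((x-\xi)/\la)$ in time, and using $\dot\xi=\dot d\,{\bf e}_1$, one obtains after rescaling that $\la^{\frac{n+2}{2}}e_5$ equals $-(\la_0/d_0)^{n-2}\,\la\dot\la\bigl(\frac{n-2}{2}h(y)+\nabla h(y)\cdot y\bigr)-(\la_0/d_0)^{n-2}\,\la\dot d\,\partial_{y_1}h(y)$, plus terms carrying $\bar w$ or derivatives of $\eta$ (the latter vanish on the region). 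Now split $\la\dot\la=\la_0\dot\la_0+(\la\dot\la_1+\dot\la_0\la_1)$ and $\la\dot d=-\la+\la\dot d_1$. The pieces carrying $\la\dot\la_1+\dot\la_0\la_1$ and $\la\dot d_1$ are precisely the new $h$-terms appearing in $E_{2,\la}$ and $E_{2,d}$. The piece $-(\la_0/d_0)^{n-2}\la_0\dot\la_0(\cdots)$ has size $(\la_0/d_0)^{2(n-2)}\lesssim(\la_0/d_0)^{n+2}$ by \eqref{defla0} and $n\ge6$, hence goes into the generic $E$; and the a priori dangerous piece $+(\la_0/d_0)^{n-2}\la\,\partial_{y_1}h$ must be combined with the drift term $-(\la_0/d_0)^{n-2}\frac{1}{x_1}\partial_{x_1}W$ sitting in $e_6$, their sum being $(\la_0/d_0)^{n-2}\la\,\frac{d+\la y_1}{1+d+\la y_1}\partial_{y_1}h\lesssim(\la_0/d_0)^{n+2}$.

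\emph{Step 3 (the term $e_6$ and assembly).} On the region the commutator terms $2\nabla(w-\bar w)\nabla\eta+(w-\bar w)\Delta\eta$ vanish. For the $\bar w$-linearized contribution one uses that $(\Delta\bar w+pW_0^{p-1}\bar w)-pW_0^{p-1}\bar w=\Delta\bar w$ and that, by the $\bar w$-analogue of \eqref{es2}, $\Delta\bar w=\la^{-\frac{n+2}{2}}\Delta h((x-\hat\xi)/\la)$ with $\Delta h(z)=\pi(z)-pU^{p-1}(z)h(z)=O(|z|^{-4})$ as $|z|\to\infty$ (from \eqref{es1} and the decay of $\pi$, $U^{p-1}$, using $n\ge6$); since $|(x-\hat\xi)/\la|\sim d/\la$ there, this rescales to $(\la_0/d_0)^{n-2}(\la/d)^4=(\la_0/d_0)^{n+2}$ times a bounded function. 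The term $\frac{d}{dt}[(\la_0/d_0)^{n-2}]W$ is of order $(\la_0/d_0)^{2(n-2)}$, and the nonlinear difference $(W_1-(\la_0/d_0)^{n-2}W)^p-W_1^p+p(\la_0/d_0)^{n-2}W_0^{p-1}W$ is $O((\la_0/d_0)^{2(n-2)})$ by Taylor expansion and the smallness of $\bar W_0$ near $\xi$; both are $\lesssim(\la_0/d_0)^{n+2}$. Combining Steps 1--3 and comparing with \eqref{app1}: $E_0$ disappears, the surviving parts of $E_\la$ and $E_d$ gain the $h$-terms and become $E_{2,\la}$ and $E_{2,d}$, and all remainders are absorbed into the generic $E$ with the three scales $\la\dot d_1(\la_0/d_0)^{n-1}$, $\la\dot\la_1(\la_0/d_0)^{n-2}$, $(\la_0/d_0)^{n+2}$.

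\emph{Main obstacle.} The delicate part is the bookkeeping in Steps 2--3: one must recognize that the term $(\la_0/d_0)^{n-2}\la\,\partial_{y_1}h$ coming from the $\dot d_0=-1$ part of $W_t$ is harmless only because of the cancellation against the drift term $\frac{1}{x_1}\partial_{x_1}W$, and that the $\bar w$-contribution is small only because $\Delta\bar w$ decays one power faster than $\bar w$ itself, which uses the equation \eqref{es2} and not merely the pointwise bound \eqref{es1}. The remaining estimates are routine, resting on \eqref{n1}, \eqref{def2} and the repeated use of $2(n-2)\ge n+2$.
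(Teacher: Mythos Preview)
Your proof is correct and follows the same route as the paper: invoke \eqref{ee2}, use \eqref{es2} to cancel $E_0$ against the expansion of Lemma~\ref{lemma1}, then expand $e_5$ and estimate $e_6$ term by term.

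Two remarks are worth making. First, your observation in Step~2 that the piece $+\la(\la_0/d_0)^{n-2}\partial_{y_1}h$ (coming from $\dot d_0=-1$ in $W_t$) must be \emph{combined} with the drift contribution $-\la(\la_0/d_0)^{n-2}\tfrac{1}{x_1}\partial_{y_1}h$ from $e_6$ is a genuine refinement over the paper, which bounds these two pieces separately. Individually each is of size $\la(\la_0/d_0)^{n-2}$, and $\la\lesssim(\la_0/d_0)^4$ holds only for $n\ge7$; your cancellation produces the extra factor $\tfrac{d+\la y_1}{1+d+\la y_1}\sim d$, which restores the bound for $n=6$ as well.

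Second, your Step~3 treatment of $(\la_0/d_0)^{n-2}[\Delta\bar w+pW_0^{p-1}\bar w]$ is slightly muddled: the identity $(\Delta\bar w+pW_0^{p-1}\bar w)-pW_0^{p-1}\bar w=\Delta\bar w$ is trivial and leaves $pW_0^{p-1}\bar w$ unestimated. Either bound it directly via $|h(y+2\tfrac{d}{\la}{\bf e}_1)|\lesssim(\la/d)^2$, or---as the paper does---use the actual $\bar w$-analogue of \eqref{es2}, namely $\Delta\bar w+p\bar W_0^{p-1}\bar w=\la^{-\frac{n+2}{2}}\pi\bigl(\tfrac{x-\hat\xi}{\la}\bigr)$, and then estimate the difference $p(W_0^{p-1}-\bar W_0^{p-1})\bar w$. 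This is a cosmetic fix; the outcome is the same.
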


\medskip
\begin{proof}
Let $y = {x-\xi \over \la}$ and consider the region $|y|<\delta {d \over \la}$, for some fixed number $\delta$.
We take $\delta$ to be small enough
 so that $\eta \left(  { |x-\xi| \over b \, d_0 }  \right)$ is identically equal to $1$ in the region we are considering, at any time $t$. This is possible thanks to \eqref{n1}, taking $T$ smaller if necessary.
The function $e_5 (x,t)$ defined in \eqref{ee2}, is explicitly given by
\begin{equation*}\begin{split}
e_5 (x,t)&= -\left({\la_0 \over d_0} \right)^{n-2} \la^{-{n\over 2}} \left[ \dot \la ({n-2 \over 2} h(y) + \nabla h (y) \cdot y ) + \dot d {\partial h
\over \partial y_1} (y) \right] \\
&+\left({\la_0 \over d_0} \right)^{n-2} \la^{-{n\over 2}} \left[ \dot \la ({n-2 \over 2} h(y+ 2{d \over \la} {\bf e}_1) - \nabla h (+ 2{d \over \la} {\bf e}_1) \cdot (y+ 2{d \over \la} {\bf e}_1) ) \right]\\
&+ \left({\la_0 \over d_0} \right)^{n-2} \la^{-{n\over 2}}  \, \dot d \, {\partial h
\over \partial y_1} (y+ 2{d \over \la} {\bf e}_1) .
\end{split}
\end{equation*}
Taking advantage of the estimate \eqref{es1}, we can write
\begin{equation}\label{ll21}
\begin{split}
\la^{n+2 \over 2} e_5 (x,t) &= - \la \, \dot \la \, \left({\la_0 \over d_0} \right)^{n-2} \left({n-2 \over 2} h(y) + \nabla h (y) \cdot y \right) \\
&- \la \, \dot d \, \left({\la_0 \over d_0} \right)^{n-2} {\partial h
\over \partial y_1} (y) \\
&+ \la \, \dot \la \, \left({\la_0 \over d_0} \right)^{n} \, f(y, {\la_1 \over \la_0} , {d_1 \over d_0} )
+ \la \, \dot d \, \left({\la_0 \over d_0} \right)^{n+1} \, f(y, {\la_1 \over \la_0} , {d_1 \over d_0} )
\end{split}
\end{equation}
where $f = f(y, {\la_1 \over \la_0 }  , {d_1 \over d_0} )$ denotes a generic function, which is smooth and bounded for $y$ in the considered region, and for $\la_1 $ and $d_1$ satisfying \eqref{n1}.

\medskip
Next, we claim that
\begin{equation}\label{ll22}
\la^{n+2 \over 2} e_6 (x,t) = \left( {\la_0 \over d_0} \right)^{n+2} f(y, {\la_1 \over \la_0 }  , {d_1 \over d_0} ),
\end{equation}
for some $f$ as before. To check the validity of \eqref{ll22}, we start with the observation that
\begin{equation*}
\begin{split}
\left({\la_0 \over d_0} \right)^{n-2} &\left[ \Delta \bar w + p W_0^{p-1} \bar w \right] = \left({\la_0 \over d_0} \right)^{n-2} p \left[   W_0^{p-1} - \bar W_0^{p-1}   \right] \bar w \\
&+\left({\la_0 \over d_0} \right)^{n-2} \, \la^{-{n+2 \over 2}} \pi \left( {x-\hat \xi \over \la} \right).
\end{split}
\end{equation*}
Using again estimate \eqref{es1}, and a Taylor expansion, we get that
$$
\la^{n+2 \over 2} \left({\la_0 \over d_0} \right)^{n-2} \left[ \Delta \bar w + p W_0^{p-1} \bar w \right]= \left( {\la_0 \over d_0} \right)^{n+2} f(y, {\la_1 \over \la_0 }  , {d_1 \over d_0} ).
$$
Observe now that
$$
\la^{n+2 \over 2} \, {d \over dt} \left[ \left({\la_0 \over d_0} \right)^{n-2} \right] W = \la^2 (T-t)^{2 \over n-4} f(y, {\la_1 \over \la_0 }  , {d_1 \over d_0} ),
$$
while
$$
\la^{n+2 \over 2} \left({\la_0 \over d_0} \right)^{n-2} {1\over x_1} {\partial W \over \partial x_1}=
\la \left({\la_0 \over d_0} \right)^{n-2} f(y, {\la_1 \over \la_0 }  , {d_1 \over d_0} ).
$$
A direct computation thus gives that both terms
$$\la^{n+2 \over 2} \, {d \over dt} \left[ \left({\la_0 \over d_0} \right)^{n-2} \right] W
\quad {\mbox {and}} \quad
\la^{n+2 \over 2} \left({\la_0 \over d_0} \right)^{n-2} {1\over x_1} {\partial W \over \partial x_1}
$$ can be described as
$ \left( {\la_0 \over d_0} \right)^{n+2} f(y, {\la_1 \over \la_0 }  , {d_1 \over d_0} )$, for  $f = f(y, {\la_1 \over \la_0 }  , {d_1 \over d_0} )$ smooth and bounded for $y$ in the considered region, and for $\la_1 $ and $d_1$ satisfying \eqref{n1}.
Taylor expanding in $\left( W_1 - \left({\la_0 \over d_0} \right)^{n-2} W \right)^p - W_1^p + p \left({\la_0 \over d_0} \right)^{n-2} W_1^{p-1} W$, and using again \eqref{es1}, one gets a similar expression also for this  term. The last term in the definition of $e_6$ can be estimated in a similar way. Putting all the above information together, \eqref{ll22} is proven.

\medskip
The proof of expansion \eqref{ee2} thus directly follows from \eqref{ll21}, \eqref{ll22} combined with expansion \eqref{ee1} in Lemma \ref{lemma1} and the definition of $W(x,t)$ in \eqref{appro2}, together with \eqref{es2}.
\end{proof}

\medskip
At this point we write the error function $S[W_2]$ as in \eqref{gia}
\begin{align}\label{pey0}
S[W_2] (x,t) &=  \left[ \underbrace{e_1 + e_2  - \left({\la_0 \over d_0} \right)^{n-2} \left[ \Delta w + p W_0^{p-1} w \right] \eta \left(
 { |x-\xi| \over b \, d_0 } \right) }_{:=E_2} \right]  \eta_R (x,t) \nonumber \\
&+ \bar E_2 (x,t) ,
\end{align}
where $\eta_R$ is the cut off function introduced in \eqref{cutoff}.
We also fix the radius $R$ in terms of $T$. We  assume that
\begin{equation}\label{defT1}
R= T^{-\kappa}, \quad {\mbox {with}} \quad 0<\kappa< {1\over n-4}.
\end{equation}
Under this assumption, we have that $R \lesssim {d \over \la } (t)$, for all $t \in (0,T)$.

 The term $\bar E_2$ in \eqref{pey0} encodes the information of the error $S[W_2]$ regarding the lower order terms and the part of the main terms far away from the concentrating point $\xi$.
In the remaining part of the Section we describe the error function $\bar E_2$, and also
the approximation $W_2 (x,t)$ itself when evaluated in the boundary of $\DD$.

For later purpose, we need to estimate this part of the error, $\bar E_2$, in certain weighted $L^\infty$ norm.
Let $\alpha $ be a fixed number with $\alpha \in (0,1)$ and let $\sigma$ be the number fixed in \eqref{sigmadef}. For any smooth function $f=f(x,t)$, $x\in \DD$ and $t \in (0,T)$, we define the norm
\begin{equation}\begin{split} \label{defstarstar}
\| f \|_{**, \alpha, \sigma} := \inf \Biggl\{ M>0 \, : \, \la^{n+2 \over 2} & |f(x ,t) | \leq M \, \Biggl( \omega_{**,1} (y,t) + \omega_{**,2} (y,t) \Biggl)  \Biggl\} ,
\end{split}
\end{equation}
where, for $y= {x-\xi \over \la}$,
\begin{equation}
\begin{split} \label{defomegastarstar}
\omega_{**,1} (y,t) &= ({\la \over d} )^{\sigma} {1\over ({d \over \la})^{n-2} + |y|^{n-2}} {1\over (1+ |y|)^{2+\alpha}} \\
\omega_{**,2} (y,t) &= ({\la \over d} )^{n-2 +\sigma} {1\over (1+ |y|)^{n-3}} .
\end{split}
\end{equation}
 For any smooth function $g= g(x,t)$ defined in $\partial \DD \times (0,T)$, we introduce the norm on the boundary
\begin{equation}\label{normb}
\| g \|_{\partial \DD}:= \| \la^{n-2 \over 2} (t) \, \left( {\la_0 \over d_0} (t) \right)^{-n+2 -\sigma} \, (T-t)^{-{\alpha \over n-4}}  \,
 g(x,t) \|_{L^\infty (\partial \DD \times (0 , T) )}.
 \end{equation}

\medskip

\medskip
In the next Lemma, we describe the part of the error we called  $\bar E_2$ in the whole $\DD \times (0,T)$, and its Lipschitz dependence on $\la_1 $ and $d_1$. When needed, to emphasize the dependence of $\bar E_2$ on the parameter functions $\la_1$ and $d_1$ we use the notation
$$
\bar E_2 (x,t) = \bar E_2 [\la_1 , d_1] (x,t).
$$

\medskip

\begin{lemma}\label{lemma3}
Assume the functions $\la_1$ and $d_1$ satisfy \eqref{n1}. Let $\alpha \in (0,{1\over 2})$ be small and fixed, and $\sigma$ as in \eqref{sigmadef}. Let $T$ be small and $R$ satisfy \eqref{defT1}. Then there exists $\ve >0$, which depends only on $n$,
\begin{equation}\label{ee3}
\| \bar E_2 \|_{**, \alpha , \sigma} \lesssim T^\ve,
\quad
\| W_2 \|_{\partial \DD} \lesssim T^{1-{\sigma + \alpha \over n-4}} .
\end{equation}
Moreover, there exist  a positive number $\ve_1$, which depends only on $n$, such that, for any parameter functions $d_1$, and $\la_1^1$, $\la_1^2$ satisfying \eqref{n1}, one has
\begin{equation}\label{ll1}
\| \bar E_2 [\la_1^1, d_1] - \bar E_2 [\la_1^2 , d_1] \|_{**, \alpha , \sigma} \lesssim  T^{\ve_1}   \| \dot \la_1^1 - \dot \la_1^2 \|_{1+\sigma \over n-4},
\end{equation}
and
\begin{equation}\label{ll1b}
\| W_2 [\la_1^1, d_1] - W_2 [\la_1^2 , d_1] \|_{\partial \DD } \lesssim T^{\ve_1} \| \dot \la_1^1 - \dot \la_1^2 \|_{1+\sigma\over n-4},
\end{equation}
Also: for any parameter functions $\la_1$, and $d_1^1$, $d_1^2$ satisfying \eqref{n1}, one has
\begin{equation}\label{ll2}
\| \bar E_2 [\la_1, d_1^1] - \bar E_2 [\la_1 , d_1^2] \|_{**, \alpha , \sigma} \lesssim  T^{\ve_1}  \| \dot d_1^1 - \dot d_1^2 \|_{1+\sigma \over n-4}
\end{equation}
and
\begin{equation}\label{ll2b}
\| W_2 [\la_1, d_1^1] - W_2 [\la_1 , d_1^2] \|_{\partial \DD} \lesssim T^{\ve_1} \| \dot d_1^1 - \dot d_1^2 \|_{1+\sigma \over n-4}
\end{equation}
\end{lemma}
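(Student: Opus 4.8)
The plan is to dissect $\bar E_2$ via the decomposition $\bar E_2 = E_2\,(1-\eta_R) + e_3 + e_4 + e_5 + e_6$ (which follows from \eqref{ee1}, \eqref{ee2} and \eqref{gia}, $E_2$ being the principal part of $S[W_2]$), and to split $\DD$ into the intermediate annulus $\mathcal A := \{\,R\la_0\le|x-\xi|\le\delta d_0\,\}$ and the bulk region $\mathcal B := \{\,|x-\xi|\ge\delta d_0\,\}$, on which $1-\eta_R\equiv 1$ and in fact $\bar E_2 = S[W_2]$. The preliminary step is to translate \eqref{n1} into the pointwise bounds $|\la_1|/\la_0 + |d_1|/d_0 \lesssim (T-t)^{\sigma/(n-4)}\simeq(\la_0/d_0)^\sigma$ and $|\la\dot\la_1| + |\la\dot d_1|\lesssim (\la_0/d_0)^{n-2+\sigma}$, together with $\la=\la_0(1+o(1))$, $d=d_0(1+o(1))$; these show that every coefficient occurring in Lemmas \ref{lemma1}--\ref{lemma2} is $\lesssim(\la_0/d_0)^{n-2+\sigma}$ times at worst one further factor $(\la_0/d_0)^{1-\sigma}\lesssim T^{(1-\sigma)/(n-4)}$.

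In $\mathcal A$ I would invoke the expansions \eqref{app2}, \eqref{ll21}--\eqref{ll22}, \eqref{le3}--\eqref{le4}. The decisive mechanism is that $1-\eta_R$ is supported where $|y|\gtrsim R$, and there $Z_0(y)\sim|y|^{2-n}$, $Z_1(y)\sim|y|^{1-n}$, $U^{p-1}(y)\sim|y|^{-4}$, while $h(y)$, $\nabla h(y)\cdot y$ and $\partial_{y_1}h(y)$ are $O(|y|^{-2})$; measured against the target weight $(1+|y|^{2+\alpha})^{-1}$ of $\|\cdot\|_{**,\alpha}$, the slowest-decaying relevant profile $U^{p-1}$ supplies a gain $R^{-(2-\alpha)}$ --- the $R^{-2}$ term of \eqref{ee3}, since $\alpha$ is small --- and every other profile is smaller still. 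As for the prefactors, the $Z_0$- and $Z_1$-coefficients $(\la\dot\la_1+\dot\la_0\la_1)$, $\la[\dot d_1-\tfrac{d_0+d_1+\la y_1}{1+d+\la y_1}]$ and the $U^{p-1}$-coefficient $(\la_0/d_0)^{n-2}[\la_1/\la_0-d_1/d_0]$ are all $\lesssim(\la_0/d_0)^{n-2+\sigma}$, whereas the one coefficient carrying an honest extra power, namely $(\la/d)^{n-1}$ in the $U^{p-1}(y)\,y_1$ term of $E_{2,d}$, produces $(\la_0/d_0)^{n-2+\sigma}(\la_0/d_0)^{1-\sigma}\lesssim(\la_0/d_0)^{n-2+\sigma}\,T^{(1-\sigma)/(n-4)}$ --- the first term of \eqref{ee3}. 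The generic remainders $E$, $e_3$, $e_4$ and the higher-order pieces of $e_5$, $e_6$ all carry powers $(\la_0/d_0)^{k}$ with $k\ge n-1$, which on the bounded set $\{\,|y|\le\delta d_0/\la_0\,\}$ beat $(\la_0/d_0)^{n-2+\sigma}(1+|y|^{2+\alpha})^{-1}$ with ample room. Summing yields the contribution $\max\{T^{(1-\sigma)/(n-4)},R^{-2}\}$ to $\|\bar E_2\|_{**,\alpha}$ from $\mathcal A$.

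In $\mathcal B$, where $\bar E_2=S[W_2]$ and $W_0,\bar W_0\lesssim\la_0^{(n-2)/2}|x-\xi|^{2-n}$, one estimates $S[W_2]$ directly: since $\Delta W_0+W_0^p=0$, the surviving terms are the transport terms $\la^{-n/2}[\dot\la\, Z_0+\dot d\, Z_1]$ and their reflected analogues, $x_1^{-1}\partial_{x_1}W_2$, the nonlinear pieces $W_0^p$, $\bar W_0^p$, $W_0^{p-1}\bar W_0$, and the cut-off error terms of $e_6$, which are supported in $\{\,bd_0\le|x-\xi|\le 2bd_0\,\}$; each is an explicit bounded function that, thanks to the smallness of $\la_0/d_0$ and of $\la_0$, fits the weight of $\|\cdot\|_{**,\alpha}$ within $\max\{T^{(1-\sigma)/(n-4)},R^{-2}\}$. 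For the boundary bound, note first that the cut-off $\eta(|x-\xi|/(bd_0))$ in \eqref{appro2} was chosen to vanish identically on $\partial\DD$, so $W_2=W_1=W_0-\bar W_0$ there; because $\DD$ is even in $\bar x=(x_2,\ldots,x_n)$, the tangent hyperplane to $\partial\DD$ at $(1,\bar 0)$ is $\{x_1=1\}$, on which $\xi$ and $\hat\xi$ are exact mirror images, so the radial symmetry of $U$ forces $W_1\equiv 0$ on $\{x_1=1\}$; the bounded curvature of $\partial\DD$ (deviation $O(|x-(1,\bar 0)|^2)$ from the tangent plane) together with the decay of $U$ and $U'$ then gives $|W_1(x)|\lesssim\la^{(n-2)/2}d_0^{3-n}$ for $x\in\partial\DD$, and inserting the weights of \eqref{normb}, with $\alpha$ small enough depending on $n$, yields $\|W_2\|_{\partial\DD}\lesssim T^{(1-\sigma)/(n-4)}$.

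Finally, the Lipschitz estimates \eqref{ll1}--\eqref{ll2b} come from running the same scheme on differences. Since $\bar E_2$ and $W_1|_{\partial\DD}$ depend on the parameter functions only through $\la$, $d$ and the explicit, uniformly bounded smooth objects appearing in Lemmas \ref{lemma1}--\ref{lemma2}, the mean value theorem in $(\la_1,d_1)$ turns $\bar E_2[\la_1^1,d_1]-\bar E_2[\la_1^2,d_1]$ into the same type of sum, but with a factor $\la\dot\la_1$ replaced by $\la(\dot\la_1^1-\dot\la_1^2)$ and/or a factor $\la_1/\la_0$ replaced by $(\la_1^1-\la_1^2)/\la_0\lesssim(T-t)^{\sigma/(n-4)}\,\|\dot\la_1^1-\dot\la_1^2\|_{1+\sigma \over n-4}$ --- in either case a genuine extra $T^{\ve_1}$ or $R^{-\ve_2}$ in front of $\|\dot\la_1^1-\dot\la_1^2\|_{1+\sigma \over n-4}$; the $d_1$-differences are handled identically. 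The step I expect to be the real obstacle is the bookkeeping in $\mathcal A$ --- checking that each of the many terms in \eqref{app2}, \eqref{ll21}, \eqref{ll22}, \eqref{le3}, \eqref{le4} fits the single weight $(\la_0/d_0)^{n-2+\sigma}\la^{-2}(1+|y|^{2+\alpha})^{-1}$ with the advertised gain --- together with the geometric input behind the decay of $W_0-\bar W_0$ on $\partial\DD$; the rest is explicit and routine.
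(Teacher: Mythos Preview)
Your proposal is correct and follows essentially the same route as the paper: the same decomposition $\bar E_2 = E_2(1-\eta_R)+\sum_{j=3}^6 e_j$, the same two regions (the paper calls them $\DD_1=\{R\la_0<|x-\xi|<\delta d\}$ and $\DD_2$), use of Lemma~\ref{lemma2} in the intermediate annulus versus direct estimation of the explicit pieces of \eqref{ee1}--\eqref{ee2} outside, the curvature argument on $\partial\DD$ for the boundary bound, and the mean-value-theorem reduction for the Lipschitz estimates. The only cosmetic slip is that your regions $\mathcal A\cup\mathcal B$ omit the inner ball $\{|x-\xi|<R\la_0\}$, but your treatment of $e_3,\ldots,e_6$ on the full set $\{|y|\le\delta d_0/\la_0\}$ already covers it, since $E_2(1-\eta_R)$ vanishes there.
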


\medskip

\begin{proof}
We start the analysis of the second estimate in \eqref{ee3}.
If $x \in \partial \DD$, then $\eta \left( { |x-\xi| \over b \, d_0 } \right) \equiv 0$, thanks to the choice of $b>0$, see \eqref{appro2}. A Taylor expansion gives
$$
W_2 (x,t) = \la^{n-2 \over 2}  g(x, t) \left(1+ O(\la^2) \right), \quad
g(x,t) :={1\over |x-\xi|^{n-2} }- {1\over |x-\hat \xi |^{n-2}} ,
$$
uniformly for $x \in \partial \DD$. We claim that
$$
g(x,t) = O( {1\over d^{n-3}} ), \quad {\mbox {uniformly on }} \quad \partial \DD.
$$
This is certainly true if $x$ is a point of the boundary, far from $p:=(1,\bar 0)$, say if $d(x,p)> r_0 \sqrt{d}$, for some constant $r_0$.
Observe now that if  $x \in \partial \DD$ is such that $d(x, (1,\bar 0) ) \leq {\sqrt d}$, then we can assume that $x = (\phi (\bar x) , \bar x)$,
with $\phi$ a smooth function so that $\phi (\bar 0) = 1$, $\nabla \phi (\bar 0 ) =0$, and $D^2 \phi (\bar 0 ) \not= 0$.
Thus, for $x$ in this region, a simple Taylor expansion gives the existence of a constant $c$ so that
$
|g(x,t) | \leq c {d^2 \over |x-\xi |^n} , \quad {\mbox {for}} \quad x\not= p, \quad g(p,t ) =0.
$
We can conclude that, for any $x \in \partial \DD$, one has
$$
\left| W_2 (x,t) \right| \leq c {1\over \la^{n-2 \over 2}} \, \left({\la \over d} \right)^{n-2} \, d,
$$
so that $ \| W_2 \|_{\partial \DD} \lesssim T^{{n-4-\alpha -\sigma\over n-4}}.$
The second estimate in \eqref{ee3} readily follows.

\medskip
Let us check \eqref{ll1b}. Let $d_1$, and $\la_1^1$, $\la_1^2$ satisfy \eqref{n1}. For any $x \in \partial \DD$, a Taylor expansion gives
\begin{equation}\begin{split}\label{uffa}
\left| W_2 [\la_1^1 , d_1] (x,t)- W_2 [\la_1^2 , d_1] (x,t) \right| &\leq \la^{-{n \over 2}} \left| Z_0 ({x-\xi \over  \la} ) -
Z_0 ({x-\hat \xi \over  \la} ) \right| |\la_1^1 - \la_1^2 |
\end{split}
\end{equation}
for some $ \la = \la_0 + \bar \la$, with $\bar \la$ satisfying \eqref{n1}. Arguing as before, and using \eqref{n1},  we get
\begin{equation*}\begin{split}
\la^{-{n \over 2}}&\left| Z_0 ({x-\xi \over  \la} ) -
Z_0 ({x-\hat \xi \over  \la} ) \right| |\la_1^1 - \la_1^2 | \lesssim \la^{-{n\over 2}} \left({\la_0 \over d_0}\right)^{n-2} d (T-t)^{1+ {1+\sigma \over n-4}} \| \dot \la_1^1 - \dot \la_1^2 \|_{1+\sigma \over n-4} \\
&\lesssim \la^{-{n-2 \over 2}} \, \left( {\la_0 \over d_0} \right)^{n-2+\sigma } \, T^{n-4-\alpha -\sigma \over n-4} \,  \| \dot \la_1^1 - \dot \la_1^2 \|_{1+\sigma \over n-4}.
\end{split}
\end{equation*}
This proves  \eqref{ll1b}. In a similar way, one can show the validity of \eqref{ll2b}.

Let us show the validity of the first estimate in \eqref{ee3}. We write $\bar E_2$ explicitly
\begin{equation}\label{rr1}\begin{split}
\bar E_2 &= \underbrace{\left[ e_1 + e_2  - \left({\la_0 \over d_0} \right)^{n-2} \left[ \Delta w + p W_0^{p-1} w \right] \eta \left(
 { |x-\xi| \over b \, d_0 } \right) \right] (1- \eta_R (x,t)) }_{:= e_{out}}\\
&+ \sum_{j=3}^6 e_j (x,t).
\end{split}
\end{equation}
We refer to formulas \eqref{ee1} and \eqref{ee2} for the definition of $e_j$, $j=1, \ldots , 6$.

We start analyzing $e_{out}$. From Lemma \ref{lemma2} and \eqref{es2}, we get that for $R\lesssim |{x-\xi \over \la} | \lesssim {d\over \la}$, for some $a>0$,
$$
\la^{n+2 \over 2} |e_{out} (x,t)| \leq T^a \left( \omega_{**,1} + \omega_{**,2} \right).
$$
Let us then consider the region ${d\over \la} \lesssim |{x-\xi \over \la} |$.
Using that $\dot d_0 +1=0$, we have
$$
 \la^{{n+2 \over 2}} |e_1 | (1- \eta_R) \lesssim  {\la \dot d_1 \over 1+ |y|^{n-1}} (1-\eta_R) \lesssim  \omega_{**,2} (y,t)  R^{-2}.
$$

Writing $e_2 = \la^{-{n\over 2}} \dot \la Z_0 \left( {x-\xi \over \la } \right) - p W_0^{p-1} \bar W_0 =e_{21} + e_{22}$, we have for ${d\over \la} \lesssim |{x-\xi \over \la} |$, and some $a>0$,
$$
 \la^{{n+2 \over 2}} |e_{21}| (1-\eta_R) \lesssim    {\la \dot \la \over 1+ |y|^{n-2}} (1-\eta_R) \lesssim  \omega_{**,2} (y,t) T^a
$$
and
$$
 \la^{{n+2 \over 2}} |e_{22}| (1-\eta_R )\lesssim   {1\over 1+ |y|^4} {1\over ({d\over \la})^{n-2} + |y|^{n-2} } \lesssim
\omega_{**,1} (y,t) T^a.
$$
Moreover
\begin{equation*}\begin{split}
 \la^{{n+2 \over 2}} \left| \left({\la_0 \over d_0} \right)^{n-2} \left[ \Delta w + p W_0^{p-1} w \right] \eta \left(
 { |x-\xi| \over b \, d_0 } \right) \right| (1-\eta_R)  \lesssim
\omega_{**,1} (y,t) R^{-1}.
\end{split}
\end{equation*}
We thus conclude that, for some $\ve>0$,
$$
\| e_{out} \|_{**} \lesssim T^\ve.
$$
Referring to formula \eqref{ee1} for their definitions, we proceed with the estimate of $e_3$ and $e_4$. We have
\begin{equation*}
\begin{split}
 \la^{{n+2 \over 2}} |e_3| &\lesssim  {\la \over 1+ ({d\over \la})^{n-2} + |y|^{n-2}} \lesssim
({\la \over d})^{n-2} {1\over 1+ |y|^{n-3}} ({\la \over d} )^2 (T-t)^{-{1\over 4}} \\
&\lesssim \omega_{**,2} (y,t) T^{1+ {3\over n-4}}
\end{split}
\end{equation*}
and
\begin{align*}
|e_4|& \lesssim |\bar W_0 |^p \lesssim \la^{-{n+2 \over 2}} {d \over ({d\over \la})^{n+2} + |y|^{n+2} }
\lesssim \la^{-{n+2 \over 2}} \omega_{**,1} (y,t) T^\ve, \quad \ve>0.
\end{align*}
Referring to \eqref{ee2}, we next estimate $e_5$ and $e_6$. We have
\begin{align*}
|e_5|&\lesssim \la^{-{n+2 \over 2}} ({\la \over d} )^{2 (n-2)} {1\over 1+ |y|^2} \eta \left({ |x-\xi| \over b \, d_0 } \right) \lesssim
\la^{-{n+2 \over 2}} \omega_{**,2} (y,t) T^3,
\end{align*}
and
\begin{align*}
 |e_6| &\lesssim \la^{-{n+2 \over 2} } ({\la \over d} )^{n-2} {d\over 1+ |y|^4} \eta \left({ |x-\xi| \over b \, d_0 } \right) +
 \la^{-{n+2 \over 2} } ({\la \over d} )^{n-2} {\la^2 \over 1+ |y|^{n-2}} + |W|^p \\
 &\lesssim \la^{-{n+2 \over 2} } \left( \omega_{**,1} (y,t) + \omega_{**,2} (y,t) \right) T.
 \end{align*}
We conclude that there exists $\ve >0$ so that
$$
\| \sum_{j=3}^6 e_j \|_{**, \alpha , \sigma } \lesssim T^\ve .
$$

\medskip
The Lipschitz dependence of $\bar E_2$ with respect to the topology of the set to which $\la_1$ and $d_1$ belong, as stated in \eqref{ll1} and \eqref{ll2}, follows from the analysis of each one of the terms of $\bar E_2$ in \eqref{rr1}. One has to study them both in a region relative close to $\xi$, where one takes advantage of the results contained in Lemma \ref{lemma2}, and in a region far from $\xi $, where the explicit expressions collected in \eqref{ee1} and \eqref{ee2} are of use.
\end{proof}

\section{Solving the outer problem}\label{outer}

This section is devoted to solve in $ \psi = \psi (x,t)$ the {\it outer problem} \eqref{sout} in the form of a non linear non local operator
$$
\psi(x,t)  = \Psi [ \la_1 , d_1 , \phi ]\, (x,t)
$$
of the parameter functions $\la_1 $ and $d_1$ satisfying the bounds \eqref{n1}, and of the function $\phi$ defined in \eqref{defw}-\eqref{defPhi}
and chosen in the following range.

\medskip
Let $a $ be a fixed number with $a \in (0,1)$ and $a>\alpha$, where $\alpha$ has been fixed in the definitions of the norms
$\| \cdot \|_{**,\alpha, \sigma}$ and $\| \cdot \|_{\partial \DD}$ in \eqref{defstarstar} and \eqref{normb} respectively, with $\sigma $ defined in \eqref{sigmadef}.
Let $\beta $ be a positive number, which depends on $n$ and $\sigma$, to be fixed later on. We assume that $\phi$ in \eqref{defw}-\eqref{defPhi} satisfies the following bound
\begin{equation} \label{bphi}
\| \phi \|_{in} := \| \phi \|_{a, \sigma}+ \| (1+ |y|) \nabla \phi \|_{a, \sigma} \leq T^\beta
\end{equation}
where $\| \phi \|_{a,\sigma}$ is the least number $K$ with
$$
 |\phi (y,t) | \leq K \, {R^{n+1-a} \over 1+ |y|^{n+1} } \, \left({\la \over d}\right)^{n-2 +\sigma}  , \quad {\mbox {for}} \quad  y \in B(0,2R).
$$

\medskip
For parameter functions $\la_1$, $d_1$ satisfying \eqref{n1}, and for functions $\phi$ satisfying \eqref{bphi}, we find a solution $\psi$ to the initial value Problem
\begin{equation}\label{sout1}\begin{split}
\psi_t &= \Delta \psi + {1\over x_1} {\partial \psi \over \partial x_1} + V \psi  +\left( ( \Delta -{\partial \over \partial t} ) \eta_R \right) \, \Phi + 2\nabla \Phi \nabla \eta_R   + {1\over x_1} {\partial \eta_R \over \partial x_1}  \Phi\\
&+p \left[ W_2^{p-1} - [ \la_0^{-{n-2 \over 2}} U ({x-\xi \over \la_0 } ) ]^{p-1} \right] \eta_{R'} \eta_R \Phi\\
&+ N [{\bf w} ] + \bar E_2 \quad {\mbox {in}} \quad \DD \times (0,T)\\
\psi &= -W_2 , \quad {\mbox {on}} \quad \partial \DD \times (0,T), \quad \psi = \psi_0 , \quad {\mbox {in}} \quad \DD\times \{ t=0 \}.
\end{split}
\end{equation}

For any smooth function $f=f(x,t)$, $x\in \DD$ and $t \in (0,T)$, we define the norm
\begin{equation}\begin{split} \label{defstarnorm}
\| f \|_{*, \alpha, \sigma} := \inf \Biggl\{ M>0 \, : \, \la^{n-2 \over 2} & |f(x ,t) | \leq M \, \Biggl( \omega_{*,1} (y,t) + \omega_{*,2} (y,t) \Biggl)  \Biggl\} ,
\end{split}
\end{equation}
where, for $y= {x-\xi \over \la}$,
\begin{equation}
\begin{split} \label{defomegastar}
\omega_{*,1} (y,t) &= \left({\la \over d} \right)^{\sigma}  {1\over ({d \over \la})^{n-2} + |y|^{n-2}} \left( { 1 \over 1+ |y|^{\alpha}} + {\la^\alpha \over (T-t)^{\alpha \over 2} } \right) \\
\omega_{*,2} (y,t) &= \left({\la \over d} \right)^{n-2+\sigma} \left( {1\over 1+ |y|^{n-5}} + {\la^{n-5} \over (T-t)^{n-5 \over 2}} \right).
\end{split}
\end{equation}

The solution to \eqref{sout1} will have $\| \cdot \|_{*,  \alpha, \sigma}$-norm bounded, for any small and smooth initial condition $\psi_0$.

\medskip
We have the validity of

\begin{prop}\label{probesterno}
Assume that the parameters $ \lambda_1$ and $d_1$ satisfy \eqref{n1},  and the
function $\phi$ satisfies the constraint
\eqref{bphi}.  Assume that  $T$ is small and that the radius $R$ defined in \eqref{defT1} satisfies the stronger condition
\begin{equation}\label{defRT}
R^{p (n+1) } T^{4\over n-4} <1.
\end{equation}
Let $\ve >0$ be the number introduced in Lemma \ref{lemma3}. Then there exists $\bar \ve \in (0,\ve)$ so that, for any
 $\psi_0 \in C_0^2 (\bar \DD )$ with
\be\nonumber  
\| \psi_0 \|_{L^\infty (\bar \DD) } + \| \nabla \psi_0 \|_{L^\infty (\bar \DD) }
\lesssim   T^{\bar \ve}  ,
\ee
Problem \eqref{sout1}
has a unique solution $\psi= \Psi(\la_1 , d_1 , \phi)$. This solution satisfies the following estimates: for $y = {x-\xi \over \la}$,
\be \label{leuco1}
\la^{n-2 \over 2} \, |\psi(x,t)| \ \lesssim \ T^{\bar \ve} \left( \omega_{*,1} (y,t) + \omega_{*,2} (y,t) \right)
,
\ee
and, for $|y| <R$,
\be\label{leuco11}
\la^{n-2 \over 2} \,
|\nabla_x \psi(x,t)| \ \lesssim \ T^{\bar \ve} \left( {\la \over d} \right)^{n-2+\sigma}  \,   {\la^{-1} \over {|y|^{\alpha +1} + 1 }}  .
\ee

\end{prop}

\medskip
\noindent
To prove this result,  we shall estimate, for  given functions $f(x,t)$, $g(x,t)$, $h(x)$ the unique solution  of the linear problem
\be\label{hh1}
\pp_t \psi =  \Delta \psi + {1\over x_1} {\partial \psi \over \partial x_1} + V \psi \ + f(x,t)  \inn \DD \times (0,T),
\ee
$$
\qquad \psi = g   \onn  \pp\DD \times (0,T), \quad \psi(\cdot, 0) = h,
$$
where the function $V $ is defined in \eqref{defV}.

\begin{lemma}\label{psistar1}
Let $\psi = \psi[f,g,h] $ be the unique solution of Problem $\equ{hh1}$.

{\it Part (i).}
If $f(x,t)$ satisfies, for all $(x,t)$,
$$
\la^{n+2 \over 2} |f(x,t) | \leq \| f \|_{**,1} \, \omega_{**,1} (y,t), \quad y={x-\xi \over \la}
$$
with $\| f \|_{**,1}$ finite, then
\be \label{potoo0}
 \la^{n-2 \over 2}  |\psi(x,t)| \ \lesssim\   \left(  \|f\|_{**,1 }  +    \| h \|_{L^\infty (\DD ) }+ \| g \|_{\partial \DD }
 \right)   \omega_{1,*} (y,t)  .
\ee
We refer to \eqref{defomegastarstar}, \eqref{defomegastar} and \eqref{normb} for the definition of $\omega_{**,1}$, $\omega_{*,1}$ and
 $\| \cdot \|_{L^\infty (\DD ) }$ respectively.
 Moreover, we have the following local estimate on the gradient
\begin{align} \label{potoo1}
 \la^{n-2 \over 2} |\nn_x\psi(x,t)|& \ \lesssim\   \left(  \|f\|_{**,1 }  +   \| h \|_{L^\infty (\DD ) }+ \| g \|_{\partial \DD }
 \right)   \left({\la \over d} \right)^{n-2 +\sigma} {\la^{-1} \over  1+ \left |y\right |^{\alpha+1}   }  ,
\end{align}
for $|y |\leq R$.

\medskip
{\it Part (ii).}
If $f(x,t)$ satisfies, for all $(x,t)$,
$$
\la^{n+2 \over 2} |f(x,t) | \leq \| f \|_{**,2} \, \omega_{**,2} (y,t), \quad y={x-\xi \over \la}
$$
with $\| f \|_{**,2}$ finite, then
\be \label{potoo0nn}
 \la^{n-2 \over 2}  |\psi(x,t)| \ \lesssim\   \left(  \|f\|_{**,2 }  +    \| h \|_{L^\infty (\DD ) }+ \| g \|_{\partial \DD }
 \right)   \omega_{*,2} (y,t)  .
\ee
We refer to \eqref{defomegastarstar}, \eqref{defomegastar} and \eqref{normb} for the definition of $\omega_{**,2}$, $\omega_{*,2}$ and
 $\| \cdot \|_{L^\infty (\DD ) }$ respectively.
 Moreover, we have the following local estimate on the gradient
\begin{align} \label{potoo1nn}
 \la^{n-2 \over 2} |\nn_x\psi(x,t)|& \ \lesssim\   \left(  \|f\|_{**,1 }  +   \| h \|_{L^\infty (\DD ) }+ \| g \|_{\partial \DD }
 \right)   \left({\la \over d} \right)^{n-2 +\sigma} {\la^{-1} \over  1+ \left |y\right |^{n-4}   }  ,
\end{align}
for $|y |\leq R$.

\end{lemma}

\proof
To prove the results for both parts (i) and (ii), we construct a super solution for \eqref{hh1}.  We will do it in full details to prove Part (i) of the Lemma.  To this end,
let $q(|z|) = \frac 1{1+ |z|^{2+\alpha}} $ and let $Q(|z|)$ be the radial positive solution of
$$
\Delta Q   + 4 M q = 0\inn \R^n
, \quad
{\mbox {given by}} \quad
Q(r) =   4 M \int_r^\infty \frac {d\rho}{ \rho^{n-1}} \int_0^\rho   q(s)s^{n-1} ds,
$$
for a constant $M$ to be fixed later on.
Observe that
$
Q(z) \sim  \frac 1{1+ |z|^{\alpha } }\inn \R^n.
$
One has
$$
\Delta Q   +  \frac {\delta} { 1 + |z|^2}Q +   3 M q  \le  0\inn \R^n
$$
provided $\delta $ is small enough.
Define
$
\bar Q  (x) :=  Q\left (\frac{x-\xi}{\la}\right )
$ and $\bar q (x) := \frac 1{\la^2 }  q\left (\frac{x-\xi}{\la}\right )$.
For a possibly smaller $\delta$, one has
$$
\Delta_x \bar Q   +    \la^{-2 }  \,  \frac {\delta} { 1 +  \left |\frac{x-\xi}{\la}\right |^2} \    \bar Q \, + \  3 M\, \bar q \ \le\  0\inn \R^n.
$$
Observe now that
$$
| V (x,t) | \leq A   \,  {\la^{-2} R^{-2} \over 1+ |y|^2} ,
$$
for some constant $A$ independent of $t$ and $T$, as a direct consequence of the definition of  $V$ given in \eqref{defV}, and the bounds \eqref{n1} on the parameter functions $\la_1 $ and $d_1$. Moreover,
$$
|{1\over x_1} {\partial \bar Q \over \partial x_1} (x,t)| \lesssim {\la^{-1} \over |(\xi + \la y)_1 |} {1\over 1+ |y|^{1+\alpha}} \lesssim B {\la^{-2} \over 1+ |y|^{2+\alpha} },
$$
for a constant $B$ depending on the diameter of $\DD$.
From the above estimates, choosing $M $ large in terms of the diameter of $\DD$, we obtain
$$
\Delta \bar Q + {1\over x_1} {\partial \bar Q \over \partial x_1} + V \bar Q + 2M  \bar q \leq 0
$$
thanks to the fact that $R$ is large.

Define $Q_1 (x,t) = K_1 \left({\la \over d} \right)^{\sigma}  {\la^{-{n-2 \over 2}} \over ({d \over \la})^{n-2} + |y|^{n-2}}  \bar Q (x)$, for $y={x-\xi \over \la}$ and $K_1$ a constant.
Let $\pi (y,t) = \left({\la \over d} \right)^{\sigma}  {1 \over ({d \over \la})^{n-2} + |y|^{n-2}}$. A direct computation shows that
$$
|\nabla_x \bar Q \nabla_x \pi| + |\bar Q \Delta \pi | + |{1\over x_1} \bar Q \pp_{x_1} \pi| \leq c_1 K_1 {\la^{-2} \over 1+ |y|^{2+\alpha}} \pi,
$$
and
$$
|\pp_t Q_2 | \leq c_2 K_1 {\la^{-{n-2 \over 2}} (T-t ) \over 1+ |y|^\alpha} \, \pi (y,t)
$$
where $c_1$, $c_2$ are two positive constants that depend only on $n$. Possibly choosing $M$ larger if necessary, we find two positive constants $A$ and $B$ so that
\begin{align*}
-\pp_t Q_1 &+ \Delta Q_1 + {1\over x_1} {\partial  Q_1 \over \partial x_1} + V \bar Q_1 + f(x,t)\\
& \leq \| f \|_{**,1} \la^{-{n-2 \over 2}} \pi (y,t) \, \left[ -A {\la^{-2} \over 1+ |y|^{2+\alpha}} + B {(T-t )^{-1} \over 1+ |y|^\alpha} \right]\\
&\leq \begin{cases} 0, & \text{if $|y| < {\sqrt{T-t} \over \la} $},\\ \tilde c_1  \la^{-{n-2 \over 2}} \pi (y,t) { (T-t)^{-1} \over 1+|y|^\alpha} , & \text{if $|y| > {\sqrt{T-t} \over \la}$}. \end{cases}
\end{align*}
Let $Q_2 (x,t ) = c \| f \|_{**,1} \la^{-{n-2 \over 2}} \pi (y,t) {\la^\alpha \over (T-t)^{\alpha \over 2}}$, and $\psi (x,t) = Q_1 (x,t) + Q_2 (x,t)$. We have
$$
-(\psi)_t + \Delta \psi+ {1\over x_1} \pp_{x_1} \psi + V\psi+ f(x,t) \leq 0.
$$
Moreover,  one has $|g(x,t)| \leq K \psi (x,t)$, for $(x,t) \in \partial \DD \times (0,T)$, and $ |h(x) | \leq K\psi (x,0)$, for $x\in \DD$, provided the constant $K>0$ is properly chosen. Thus $K \psi$ is a positive super solution for \eqref{hh1}.
Estimate \equ{potoo0} thus follows from parabolic comparison.

\medskip
To get the gradient estimate in \equ{potoo1} we scale around  $\xi$
letting
$$
 \psi(x, t ) :=   \ttt \psi\left ( \frac {x- \xi}{\la} , \tau(t) \right ), \quad z={x-\xi \over \la},
$$
where
$\dot \tau (t) =  \la(t)^{-2}$.
We choose $T$ small so that $\tau \ge 2$.
Then $\ttt \psi$ satisfies  for  $|z| \le \delta \la^{-1}$, with sufficiently small $\delta$,
$$
\pp_\tau \ttt \psi =         \Delta_z \ttt \psi  +   a(z,t) \cdot \nn_z \ttt \psi   +  b(z,t) \ttt\psi + \ttt f(z,\tau)
$$
where
$
\ttt f(z,\tau)  =   \la^2 f( \xi + \la  z , t(\tau) ) ,
$
and  $a(z,t)$ and $b(z,t)$ are uniformly small coefficients.
Our assumption in $f$ implies  that in the region $|z| <\delta \la^{-1}$
$$
\la^{n+2 \over 2} |\ttt f(z,\tau)|\ \lesssim \   ({\la \over d})^{n-2+\sigma} \frac { \|f\|_{**, 1}}{ 1 + |z|^{2+\alpha} }
$$
while we have already  established that for $|z| <R$
$$
\la^{n-2 \over 2} |\ttt \psi (z,\tau)| \lesssim  ({\la \over d})^{n-2+\sigma} \|f\|_{**,1 } \frac 1{ 1 + |z|^{\alpha} } .
$$
Let us now fix $0<\eta <1$. By standard parabolic estimates we get that for $\tau_1 \ge \tau(t_0) + 2$,
\begin{equation*}\begin{split}
 [ \nn_z  \ttt \psi( \tau_1  , \cdot ) ]_{\eta, B_{10} (0)}  & +    \|  \nn_z  \ttt \psi( \tau_1 , \cdot )  \|_ {L^\infty ( B_{10} (0))}\ \\
 &\lesssim\
  \|  \ttt \psi  \|_ {L^\infty ( B_{20} (0)) \times (\tau_1-1 , \tau_1 )}  + \|  \ttt f  \|_ {L^\infty ( B_{20} (0)) \times (\tau_1-1 , \tau_1 )} \\
& \lesssim  \la^{-{n-2 \over 2}} ({\la \over d})^{n-2+\sigma}  \|f\|_{**,1}  .
\end{split} \end{equation*}
provided that $\tau_1 \ge 2$.  Translating this estimate to the original variables $(x,t)$ we find  that for any $t \ge   c_n t_0$, for a suitable constant $c_n$,
 $$
 ( R\la )^{1+\eta} [ \nn_x \psi( t  , \cdot ) ]_{\eta, B_{ 10 R\la } (\xi)}   +     R\la \|  \nn_x\psi( t , \cdot )  \|_ {L^\infty (B_{ 10 R\la } (\xi))}\ \lesssim\
 \la^{-{n-2 \over 2}} ({\la \over d})^{n-2 +\sigma}  \|f\|_{**,1} .
 $$
 Using similar parabolic estimate up to the initial condition $\psi_0$  at $0$ for $\psi$ yields the validity of  the above estimate, and hence of \equ{potoo1}, for any $t\ge 0$. The proof of Part (i)  is complete.

 \medskip
 To obtain the result in Part (ii), we define
 $$
Q_1 (x,t ) = K_1 \la^{-{n-2 \over 2}} \left({\la \over d} \right)^{n-2+\sigma} \bar Q (x,t), \quad \bar Q (x,t) = Q \left( {x-\xi \over \la } \right),
$$
where $K_1$ is a positive constant and $Q$ is the solutions to $\Delta Q + {4M \over 1+ |y|^{n-3}} = 0$ that decays as $|Q(y) | \leq {4M\over 1+ |y|^{n-5}}$, with $M$ a fixed constant, defined as $Q(r) =   4 M \int_r^\infty \frac {d\rho}{ \rho^{n-1}} \int_0^\rho   {s^{n-1} \over 1+ s^{n-3}} ds$, $r=|y|$. Recall that $n\geq 6$.
One has
\begin{align*}
-\pp_t Q_1 &+ \Delta Q_1 + {1\over x_1} {\partial  Q_1 \over \partial x_1} + V \bar Q_1 + f(x,t)\\
& \leq \| f \|_{**,2} \la^{-{n-2 \over 2}} \left({\la \over d} \right)^{n-2+\sigma} \, \left[ -A {\la^{-2} \over 1+ |y|^{n-3}} + B {(T-t )^{-1} \over 1+ |y|^{n-5}} \right]\\
&\leq \begin{cases} 0, & \text{if $|y| < {\sqrt{T-t} \over \la} $},\\ \tilde c_1  \la^{-{n-2 \over 2}} \left({\la \over d} \right)^{n-2+\sigma} { (T-t)^{-1} \over 1+|y|^{n-5}} , & \text{if $|y| > {\sqrt{T-t} \over \la}$},\end{cases}
\end{align*}
for some positive constants $A$ and $B$.
Let $Q_2 (x,t ) = c \| f \|_{**,2} \la^{-{n-2 \over 2}} \left({\la \over d} \right)^{n-2+\sigma} {\la^{n-5} \over (T-t)^{n-5 \over 2}}$, and $\psi (x,t) = Q_1 (x,t) + Q_2 (x,t)$. We have
$$
-(\psi)_t + \Delta \psi+ {1\over x_1} \pp_{x_1} \psi + V\psi+ f(x,t) \leq 0.
$$
Moreover,  one has $|g(x,t)| \leq K \psi (x,t)$, for $(x,t) \in \partial \DD \times (0,T)$, and $ |h(x) | \leq K\psi (x,0)$, for $x\in \DD$, provided the constant $K>0$ is properly chosen. Thus $K \psi$ is a positive super solution for \eqref{hh1}.
Estimate \equ{potoo0nn} thus follows from parabolic comparison.
The proof of  \equ{potoo1nn} is similar to the corresponding one in Part (i).
 \qed

\medskip

We now give the
\begin{proof}[Proof of Proposition \ref{probesterno}]
Combining Part (i) and Part (ii) in Lemma \ref{psistar1}, one defines a linear bounded operator $S(f,g,h) = \psi$, which is the solution to \eqref{hh1} and the existence of a constant $c>0$ such that
\begin{equation}
\nonumber 
\| S(f,g,h)\|_{*,\alpha, \sigma} \leq c \left(   \|f\|_{**,\alpha, \sigma }  + \| h \|_{L^\infty (\DD ) }+ \| g \|_{\partial \DD}
  \right).
\end{equation}
We refer to \eqref{defstarnorm} and \eqref{defstarstar} for the definition of $\| \cdot \|_{*,\alpha, \sigma} $ and $\| \cdot \|_{**,\alpha,\sigma} $.
We establish the existence of a solution $\psi$ to \eqref{sout1}, satisfying \eqref{leuco1}, as a fixed point for the Problem
\begin{equation}
\label{hh44}
\psi = {\bf S} (\psi), \quad {\bf S} (\psi ):= S(f,g,h),
\end{equation}
where
\begin{equation}\nonumber \begin{split}
f&= \left( \Delta -{\partial \over \partial t} \right) \eta_R \, \Phi + 2\nabla \Phi \nabla \eta_R + {1\over x_1} {\partial \eta_R \over \partial x_1}  \Phi \\
&+p \left[ W_2^{p-1} - [ \la^{-{n-2 \over 2}} U ({x-\xi \over \la } ) ]^{p-1} \right] \eta_{R'} \eta_R \Phi + N [{\bf w} ] + \bar E_2,\quad\\
g&=-W_2 , \quad h=\psi_0.
\end{split}
\end{equation}
Let $\ve >0$ be the constant in \eqref{ee3}, Lemma \ref{lemma3}. We claim that there exist $\bar \ve \in (0,\ve)$ and  a fixed point $\psi$ for \eqref{hh44} in the set
$$
B_M =\{ \psi \, : \, \| \psi \|_{*,\alpha , \sigma } \leq M \, T^{\bar \ve} \}, \quad {\mbox {for some}} \quad M>0,
$$
as a consequence of the Contraction Mapping Theorem. Indeed, by Lemma \ref{psistar1}, there exists a constant $c$ such that, for any $\psi \in B_M$
$$
\| {\bf S} (\psi )\|_{*,\alpha, \sigma } \leq c \left( \| f \|_{**,\alpha ,\sigma } + \| h \|_{L^\infty (\DD ) }+ \| g \|_{\partial \DD } \right)
$$
From  \eqref{ee3} in Lemma \ref{lemma3}, we get that $\| \bar E_2 \|_{** \alpha , \sigma} \lesssim T^\ve$ and  $\| g \|_{\partial \DD } \lesssim  T^{1-{\alpha +\sigma \over (n-4)}} $.
Thus the map ${\bf S}$ sends the set $B_M$ into $B_M$ provided that $\| f \|_{**,\alpha , \sigma  } \leq c T^{\bar \ve} $, for some positive constant $c$ independent of $t$ and $T$. This last inequality follows from the fact that
\begin{equation}\label{hh6nnn}\begin{split}
&\| \left( \Delta -{\partial \over \partial t} \right) \eta_R \, \Phi + 2\nabla \Phi \nabla \eta_R + {1\over x_1} {\partial \eta_R \over \partial x_1}  \Phi \| _{**,\alpha , \sigma  } \lesssim R^{\alpha - a} \| \phi \|_{in}  \\
& \|
p \left[ W_2^{p-1} - [ \la^{-{n-2 \over 2}} U ({x-\xi \over \la } ) ]^{p-1} \right] \eta_{R'} \eta_R \Phi
+ N [{\bf w} ] \| _{**,\alpha , \sigma } \lesssim R^{-a} \| \phi \|_{in}
\end{split}
\end{equation}
combined with our assumptions on $R$ in \eqref{defT1}, \eqref{defRT}.

\medskip
We also claim that there exists a positive number $b$, which depends on $n$, so that
\begin{equation}\label{ffi1}
\| {\bf S} (\psi_1 ) - {\bf S} (\psi_2 ) \| _{*,\alpha , \sigma} \lesssim T^b \| \psi_1 - \psi_2 \|_{*,\alpha , \sigma} .
\end{equation}
Thus, the map ${\bf S}$ is a contraction. This concludes the proof of the existence of $\psi$ solution to \eqref{sout1}, satisfying estimate \eqref{leuco1}. Estimate \eqref{leuco11} follows from Lemma \ref{psistar1} and estimate \eqref{potoo1}.

\medskip
The rest of this proof is devoted to establish the validity of \eqref{hh6nnn} and \eqref{ffi1}

\medskip
\medskip
We now prove \eqref{hh6nnn}. Recall that
$$
 \Phi (x,t) = \la^{-{n-2 \over 2}} \phi \left( {x-\xi \over \la } , t\right)
 $$
 We star with the first estimate. Since we are assuming the bound \eqref{bphi} in the {\it inner} function $\phi$, we  observe that
$$
\la^{n-2 \over 2} \, \left| (\Delta -{\partial \over \partial t} ) \eta_R \Phi \right| \lesssim \left(
|{\eta'' \over R^2 \la^2} |+ | {\eta' \over R\la} {\dot \la \over \la} | + |\eta' {\dot d \over \la } | \right) \| \phi \|_{in} \,  \left( {\la \over d} \right)^{n-2+\sigma} \, {1\over 1+ |y|^a},
$$
so that, in the region where it is not zero (that is $R \leq |y| \leq 2R$), we get
$$
\la^{n-2 \over 2} \left| (\Delta -{\partial \over \partial t} ) \eta_R \Phi \right| \lesssim  R^{-a+\alpha} \| \phi \|_{in} \,  \left( {\la \over d} \right)^{n-2+\sigma } \,  { \la^{-2} \over 1+ |y|^{2+\alpha}} .
$$
Analogous estimate holds for the term ${1\over x_1} {\partial \eta_R \over \partial x_1}  \Phi$. Similarly, one has
$$
\la^{n-2 \over 2} \left| 2 \nabla \Phi \nabla \eta_R \right| \lesssim |{\eta' \over \la R} | \| \phi \|_{in} \,  \left( {\la \over d} \right)^{n-2+\sigma } \,  { \la^{-1} \over 1+ |y|^{1+a}} ,
$$
so that, in the region where it is not zero, we get
$$
\la^{n-2 \over 2} \left| 2 \nabla \Phi \nabla \eta_R \right| \lesssim  R^{-a+\alpha} \| \phi \|_{in} \,  \left( {\la \over d} \right)^{n-2+\sigma } \,  { \la^{-2} \over 1+ |y|^{2+\alpha}} .
$$
We conclude that
\begin{equation}\label{uffa2}
\| \left( \Delta -{\partial \over \partial t} \right) \eta_R \, \Phi + 2\nabla \Phi \nabla \eta_R + {1\over x_1} {\partial \eta_R \over \partial x_1}  \Phi \| _{**,\alpha } \lesssim  R^{-a+\alpha} \| \phi \|_{in}.
\end{equation}
Since $a>\alpha$ and \eqref{bphi},  the first estimate in \eqref{hh6nnn} is proven.

\medskip
Next we consider the second estimate in \eqref{hh6nnn}. We have
\begin{equation*}\begin{split}
\la^{n-2 \over 2} \, &\left| p \left[ W_2^{p-1} - [ \la^{-{n-2 \over 2}} U ({x-\xi \over \la } ) ]^{p-1} \right] \eta_{R'} \eta_R \Phi
\right| \\
&
\lesssim     \| \phi \|_{in}  \, \left({\la \over d} \right)^{n-2 +\sigma  }\,    {\la^{-2} \over 1+ |y|^{2+\alpha}} \eta_R  {\la \over d} \,  R^{n+1-a} \\
&\lesssim \| \phi \|_{in}  \, \left({\la \over d} \right)^{n-2 +\sigma  }\,    {\la^{-2} \over 1+ |y|^{2+\alpha}} \eta_R  \,  R^{-a}
\end{split}
\end{equation*}
We conclude that
\begin{equation}\label{uffa22}
\| \la^{n-2 \over 2} \, \left( p \left[ W_2^{p-1} - [ \la^{-{n-2 \over 2}} U ({x-\xi \over \la } ) ]^{p-1} \right] \eta_{R'} \eta_R \Phi
\right) \| _{**,\alpha ,\sigma } \lesssim  R^{-a} \| \phi \|_{in} .
\end{equation}
In order to estimate $N({\bf w} )$, we write
\begin{equation}\label{ffi2}
W_2 (x,t) = {1\over \la^{n-2 \over 2} } [U(y) + \rho (y)].
\end{equation}
From \eqref{s2}, we get
\begin{equation*}\begin{split}
\la^{n+2 \over 2} |N({\bf w} )| &(\la y + \xi ) \lesssim   (U+\rho + \la^{n-2 \over 2}[ \psi + \eta_R \Phi] )^p -
(U+\rho )^p \\
&- p (U+\rho)^{p-1} \la^{n-2 \over 2} [\psi + \eta_R \Phi] \lesssim  |\la^{n-2 \over 2} \psi |^p + |\la^{n-2 \over 2} \eta_R \Phi |^p  .
\end{split}
\end{equation*}
We have that
$$
| \la^{n-2 \over 2} \psi (x,t) |^p \lesssim \| \psi \|_{*, \alpha, \sigma}^p \left( |\omega_{*,1}|^p + |\omega_{*,2}|^p \right)(y,t).
$$
We have
\begin{align}\label{pey1}
|\omega_{*,1}|^p& \lesssim {1\over ({d\over \la})^{n+2 } + |y|^{n+2}} \left( {1 \over 1+ |y|^{p \alpha} } + {\la^{\alpha p} \over (T-t)^{\alpha p\over 2}}\right) \nonumber \\
&\lesssim \omega_{**,1} (y,t) {1+ |y|^{2+\alpha} \over ({d\over \la})^4 + |y|^4}  \, \left( {1 \over 1+ |y|^{p \alpha} } + {\la^{\alpha p} \over (T-t)^{\alpha p\over 2}}\right) \nonumber \\
&\lesssim   \omega_{**,1} (y,t) \begin{cases} ({\la \over d})^2 , & \text{if $|y| < {\sqrt{T-t} \over \la} $}, \nonumber \\
 (T-t)^{{n-2 \over n-4} - {\alpha \over n-2} ({n+2 \over 2} - 4{n-3 \over n-4} )} , & \text{if $|y| > {\sqrt{T-t} \over \la}$},\end{cases} \nonumber \\
 &\lesssim T^b \omega_{**,1} (y,t)
\end{align}
and
\begin{align}\label{pey2}
|\omega_{*,2}|^p& \lesssim \left({\la \over d} \right)^{n+2} \left( {1 \over 1+ |y|^{(n-5) p} } + {\la^{(n-5) p} \over (T-t)^{(n-5) p\over 2}}\right) \nonumber \\
&\lesssim \omega_{**,2} (y,t) \left({\la \over d} \right)^{4-\sigma } (1+ |y|^2) \left( {1 \over 1+ |y|^{(n-5) p} } + {\la^{(n-5) p} \over (T-t)^{(n-5) p\over 2}}\right)\nonumber\\
&\lesssim   \omega_{**,2} (y,t) \begin{cases} ({\la \over d})^{4-\sigma} {1\over (1+ |y|)^{n-7 + 4 {n-5 \over n-2}}}  , & \text{if $|y| < {\sqrt{T-t} \over \la} $}, \nonumber \\
 (T-t)^{{n-3 \over 2} +{1 \over n-4} (1-\sigma + 4{n-5 \over n-2} )} , & \text{if $|y| > {\sqrt{T-t} \over \la}$},\end{cases} \nonumber \\
 &\lesssim T^b \omega_{**,2} (y,t)
\end{align}
for some $b >0$ which depends on $n$. From \eqref{pey1} and \eqref{pey2}, we get
$$
| \la^{n-2 \over 2} \psi (x,t) | \lesssim \| \psi \|_{*, \alpha, \sigma}^p  T^b \left( |\omega_{**,1}| + |\omega_{**,2}| \right)(y,t).
$$
Furthermore, since $\phi$ satisfies the bound \eqref{bphi}, we have
\begin{align}\label{pey3}
|\la^{n-2 \over 2} \eta_R \Phi |^p &\lesssim |\eta_R \phi |^p \lesssim \eta_R^p \left({\la \over d} \right)^{n+2 +p\sigma } \| \phi \|_{in}^p {R^{p(n+1-a)} \over (1+ |y|^{n+1} )^p}   \nonumber \\
&\lesssim \lesssim \eta_R^p \left({\la \over d} \right)^{n-2+\sigma } {1\over 1+ |y|^{2+\alpha}} \| \phi \|_{in}^p T^{4 + (p-1) \sigma \over n-4} R^{(n+1)p} R^{-ap} \nonumber \\
&\lesssim \omega_{**,1} (y,t) \eta_R \| \phi \|_{in}^p T^{4+(p-1) \sigma \over n-4} R^{(n+1)p} R^{-ap}.
\end{align}
Thanks to \eqref{pey3} and \eqref{defRT}, we conclude that
$$
\la^{n+2 \over 2} |N({\bf w} )| (\la y + \xi ) \lesssim T^\ve \left( \omega_{**,1} (y,t) + \omega_{**,2} (y,t) \right) \, \left( \| \psi\|^p_{*,\alpha, \sigma} + \| \phi \|_{in}^p \right),
$$
for some $\ve >0$. This, together with \eqref{uffa22},  concludes the proof of the second estimate in \eqref{hh6nnn}.

\medskip
\medskip
We now prove \eqref{ffi1}.
Observe  that, for any pair of functions $\psi_1$, $\psi_2 \in B_M$, we have
$$
\| {\bf S} (\psi_1 ) - {\bf S}(\psi_2 ) \| _{*,\alpha , \sigma } \leq c \| N ({\bf w}_1) - N ({\bf w}_2) \|_{**,\alpha, \sigma  }
$$
since  $g$ and $h$, as defined in \eqref{hh3}, do not depend on $\psi$. Here we denote
$$
{\bf w}_j=  \psi_j (x,t) + \eta_R (x,t)  \Phi (x,t), \quad j=1,2.
$$
We refer to \eqref{s2} for the definition of $N$. Using again \eqref{pey1}-\eqref{pey2}, we can write
\begin{equation*}\begin{split}
\la^{n+2 \over 2} &\left| N({\bf w}_1 ) - N({\bf w}_2) \right| (\la y + \xi , t) \lesssim  \Biggl[ (U+\rho + \la^{n-2 \over 2} (\psi_1 + \eta_R \Phi ))^p \\
&- (U+\rho + \la^{n-2 \over 2} (\psi_2 + \eta_R \Phi ))^p - p (U+\rho )^{p-1} \la^{n-2 \over 2} |\psi_1 - \psi_2| \Biggl]\\
&\lesssim \, |\la^{n-2 \over 2} (\psi_1 - \psi_2)  |^p \lesssim \| \psi_1 - \psi_2 \|_{*, \alpha, \sigma}^p  T^b \left( |\omega_{**,1}| + |\omega_{**,2}| \right)(y,t),
\end{split}
\end{equation*}
for some $b >0$, dependent on $n$.
From here, we get the validity of \eqref{ffi1}, and this concludes the proof of the Proposition.

\end{proof}

\medskip
We further observe that the solution $\psi = \psi [\la_1 , d_1 , \phi ]$  to Problem \eqref{sout1} clearly depends  on the parameter functions $\la_1 $, $d_1$ and $\phi$. Next Proposition
clarifies that $\psi$ is Lipschitz with respect to $\la_1$, $d_1$ and $\phi$ and their respective topologies.

\medskip
\begin{lemma}\label{lemmapsi}
Assume the validity of the hypothesis in Proposition \ref{probesterno}. There exists a positive number $b>0$, which depends on $n$ and $\sigma$,  so that, for any $\la_1^1$, $\la_1^2$ satisfying \eqref{n1}, we have
\begin{equation}
\label{ll3}
\| \psi [\la_1^1 , d_1 , \phi ] - \psi [\la_1^2 , d_1 , \phi] \|_{*,\alpha, \sigma } \leq T^b \| \dot \la_1^1 - \dot \la_1^2 \|_{1+\sigma\over n-4},
\end{equation}
for any $d_1^1$, $d_1^2$ satisfying \eqref{n1},
\begin{equation}
\label{ll4}
\| \psi [\la_1 , d_1^1 , \phi ] - \psi [\la_1 , d_1^2 , \phi] \|_{*,\alpha , \sigma} \leq T^b \| \dot d_1^1 - \dot d_1^2 \|_{1+\sigma\over n-4},
\end{equation}
and, for any $\phi_1$, $\phi_2$ satisfying \eqref{bphi}
\begin{equation}
\label{ll5}
\| \psi [\la_1 , d_1 , \phi_1 ] - \psi [\la_1 , d_1 , \phi_2] \|_{*,\alpha , \sigma} \leq T^b \| \phi_1 - \phi_2 \|_{in},
\end{equation}

\end{lemma}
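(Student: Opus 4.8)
The plan is to deduce the three Lipschitz bounds directly from the fixed‑point construction of $\psi$ in Proposition \ref{probesterno}, by differencing the fixed‑point identity and absorbing the resulting contraction term. Recall that $\psi=\psi[\la_1,d_1,\phi]$ was obtained as the unique fixed point of $\psi={\bf S}(\psi):=S(f,g,h)$, where $S$ is the \emph{linear} solution operator of \eqref{hh1} estimated in Lemma \ref{psistar1} and $(f,g,h)$ is the triple recorded in the proof of Proposition \ref{probesterno}; here $h=\psi_0$ is fixed, while $f$ and $g$ depend on $(\la_1,d_1,\phi)$ and $f$ in addition on $\psi$ itself. I would carry out \eqref{ll3} in detail; \eqref{ll4} and \eqref{ll5} follow by the same scheme with only bookkeeping changes.

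First I would write $\psi^i:=\psi[\la_1^i,d_1,\phi]$, let ${\bf S}^i$ denote the map with parameter $\la_1^i$, and split
\[
\psi^1-\psi^2=\bigl[{\bf S}^1(\psi^1)-{\bf S}^1(\psi^2)\bigr]+\bigl[{\bf S}^1(\psi^2)-{\bf S}^2(\psi^2)\bigr].
\]
The first bracket is $\le {\bf c}_0\,\|\psi^1-\psi^2\|_\alpha$ by the contraction estimate \eqref{ffi1}, with ${\bf c}_0\in(0,1)$ fixed (uniformly in the parameters) once $R$ is large and $T$ small. Using the linearity of $S$ in $(f,g,h)$ and that $d_1,\phi,h$ are common, the second bracket equals $S\bigl(f^1[\psi^2]-f^2[\psi^2],\,g^1-g^2,\,0\bigr)$, hence by Lemma \ref{psistar1} it is $\lesssim \|f^1[\psi^2]-f^2[\psi^2]\|_{**,\alpha}+\|g^1-g^2\|_{\partial\DD}$. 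The boundary term is $\|W_2[\la_1^1,d_1]-W_2[\la_1^2,d_1]\|_{\partial\DD}$, already controlled in \eqref{ll1b}. For the interior term I would go through the list of terms making up $f$: the commutator terms $(\Delta-\partial_t)\eta_R\,\Phi+2\nabla\Phi\nabla\eta_R+x_1^{-1}\partial_{x_1}\eta_R\,\Phi$ and $\Phi$ itself involve only $\la_0$, $d_0$ and $\xi$, none of which depends on $\la_1$, so they cancel; $\bar E_2[\la_1,d_1]$ contributes $\|\bar E_2[\la_1^1,d_1]-\bar E_2[\la_1^2,d_1]\|_{**,\alpha}$, bounded by \eqref{ll1}; and the term $p[W_2^{p-1}-(\la_0^{-(n-2)/2}U((x-\xi)/\la_0))^{p-1}]\eta_{R'}\eta_R\Phi$ together with $N[{\bf w}]$ (where ${\bf w}=\psi^2+\eta_R\Phi$ is held fixed) depend on $\la_1$ only through $W_2$, so their differences are estimated exactly as the corresponding terms were in the proof of Proposition \ref{probesterno}, now using the Lipschitz control of $W_2$ (equivalently of $\rho$ in \eqref{ffi2}) with respect to $\la_1$, obtained as in Lemma \ref{lemma3}, together with the smallness of $\|\phi\|_{in}$ and $\|\psi^2\|_\alpha$. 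Each piece carries a prefactor $\max\{T^{\ve_1},R^{-\ve_2}\}$ (or better) in front of $\|\dot\la_1^1-\dot\la_1^2\|_{1+\sigma\over n-4}$. Absorbing ${\bf c}_0\|\psi^1-\psi^2\|_\alpha$ into the left‑hand side then gives \eqref{ll3} with a new ${\bf c}\in(0,1)$, provided $R$ is large and $T$ small.

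The proofs of \eqref{ll4} and \eqref{ll5} use the same split. For \eqref{ll4} one varies $d_1$; now $\xi$, and hence $\eta_R$ and $\Phi$, also move, so the commutator and potential terms contribute as well, but each extra contribution is handled by the same mechanism, using \eqref{ll2}, \eqref{ll2b} and the Lipschitz dependence of $\xi$ on $d_1$ built into \eqref{n1}. For \eqref{ll5} one varies $\phi$ with $\la_1,d_1$ fixed, so $W_2$, $\bar E_2$ and $g$ are unchanged and the whole difference comes from the terms linear in $\Phi$ and from $N[{\bf w}]$; these are precisely the quantities bounded in \eqref{hh6nnn} and its proof, which give a prefactor $\max\{R^{-a+\alpha},R^{-2},T^{(1-\sigma)/(n-4)}\}$ in front of $\|\phi_1-\phi_2\|_{in}$.

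The step I expect to be the main obstacle is the estimate of the differences of the nonlinear quantities — $N[{\bf w}^1]-N[{\bf w}^2]$ and $W_2^{p-1}-(\la_0^{-(n-2)/2}U((x-\xi)/\la_0))^{p-1}$ — because for $d\ge 7$ one has $p=\tfrac{n+2}{n-2}<2$, so $t\mapsto|t|^p$ is only $C^{1,p-1}$ and naive mean‑value bounds must be replaced by inequalities of the type $\bigl||a|^{p-1}-|b|^{p-1}\bigr|\lesssim|a-b|^{p-1}$. It is the superquadratic structure of $N$ together with the a priori smallness of $\|\psi\|_\alpha+\|\phi\|_{in}$ from Proposition \ref{probesterno} that make this work and that force the Lipschitz prefactors to be genuinely small, which is exactly what is needed for the contraction‑type absorption at the end. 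Everything else is routine and parallels the proofs of Lemma \ref{lemma3} and Proposition \ref{probesterno}.
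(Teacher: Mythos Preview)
Your proposal is correct and follows essentially the same route as the paper. The only organizational difference is that for \eqref{ll5} the paper, instead of invoking the contraction \eqref{ffi1} abstractly, writes the equation satisfied by $\psi_1-\psi_2$ and splits it as $\hat\psi+\bar\psi$, where $\hat\psi$ absorbs the terms linear in $\Phi=\Phi_1-\Phi_2$ via Lemma \ref{psistar1} and $\bar\psi$ is handled by a fresh contraction argument on $N[{\bf w}_1]-N[{\bf w}_2]$; this is just a repackaging of your absorption step, and the Hölder-type bound for $p<2$ you flag is exactly what the paper uses there.
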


\begin{proof} Estimates \eqref{ll3} and \eqref{ll4} follows from the Lipshitz bound on the error function $\bar E_2$ contained
 in \eqref{ll1} and \eqref{ll2}, and from the Lipschitz bound on the value of $W_2$ on the boundary $\partial \DD$ as described in \eqref{ll1b} and \eqref{ll2b}. We leave the details to the reader.

 \medskip
 We shall prove \eqref{ll5}. As in the argument to show the first estimate in \eqref{hh6nnn}, we need to chose the number $a$ in the definition of the norm \eqref{defstarnorm} and the number $\alpha $ in the definition of the norm \eqref{defstarstar} so that $a >\alpha$.

Let $\la_1$ and $d_1$ be fixed. The Implicit Function Theorem ensures that $\phi \to \psi (\phi)$,
solution to Problem \eqref{sout1}, is a smooth map. Let us define $Z(x, t) = {\partial \psi \over \partial \phi} [\bar \phi] (x, t)$, for functions
$\bar \phi$ satisfying \eqref{bphi}. Then $Z$ solves
\begin{equation}\nonumber \begin{split}
Z_t &= \Delta Z + {1\over x_1} {\partial Z \over \partial x_1} + V Z + \left( ( \Delta -{\partial \over \partial t} ) \eta_R \right)\, \bar \Phi + 2\nabla \bar \Phi \nabla \eta_R + {1\over x_1} {\partial \eta_R \over \partial x_1}  \bar \Phi \\
&+p \left[ W_2^{p-1} - [ \la^{-{n-2 \over 2}} U ({x-\xi \over \la } ) ]^{p-1} \right] \eta_{R'} \eta_R \bar \Phi\\
&+ p \left( (W_2 + \psi + \eta_R \Phi )^{p-1} - W_2^{p-1} \right) \eta_R \bar \Phi   \quad {\mbox {in}} \quad \DD \times (0,T)\\
\psi &= 0 , \quad {\mbox {on}} \quad \partial \DD \times (0,T), \quad \psi = 0 , \quad {\mbox {in}} \quad \DD\times \{ t=0 \}.
\end{split}
\end{equation}
where
$$
\Phi (x,t) = \la^{-{n-2 \over 2}} \phi ({x-\xi \over \la } , t), \quad \bar \Phi (x,t) = \la^{-{n-2 \over 2}} \bar \phi ({x-\xi \over \la } , t).
$$
Arguing as in \eqref{uffa2} and \eqref{uffa22}, we get that
$$
\la^{n+2 \over 2} \left| \left( ( \Delta -{\partial \over \partial t} ) \eta_R \right)\, \bar \Phi + 2\nabla \bar \Phi \nabla \eta_R + {1\over x_1} {\partial \eta_R \over \partial x_1}  \bar \Phi \right| \lesssim R^{-a+\alpha} \| \bar \phi \|_{in} \left( \omega_{**,1} + \omega_{**,2} \right)
$$
and
$$
\la^{n+2 \over 2} \left| p \left[ W_2^{p-1} - [ \la^{-{n-2 \over 2}} U ({x-\xi \over \la } ) ]^{p-1} \right] \eta_{R'} \eta_R \bar \Phi \right| \lesssim R^{-a} \| \bar \phi \|_{in} \left( \omega_{**,1} + \omega_{**,2} \right).
$$
On the other hand, one has
\begin{align*}
\la^{n+2 \over 2} & \left| p \left( (W_2 + \psi + \eta_R \Phi )^{p-1} - W_2^{p-1} \right) \eta_R \bar \Phi \right| \lesssim |\la^{n-2 \over 2} \psi |^{p-1} \, |\la^{n-2 \over 2} \bar \phi | \eta_R \\
&+ |\la^{n-2 \over 2} \eta_R \phi  |^{p-1} \, |\la^{n-2 \over 2} \bar \phi | \eta_R = A+B,
\end{align*}
with
\begin{align*}
|A|&\lesssim \left({\la \over d} \right)^{n+2+p \sigma} {R^{n+1-a} \over 1+ |y|^{n+1}}  \| \psi \|_{*,\alpha , \sigma} \| \bar \phi \|_{in}\\
&\lesssim \left({\la \over d} \right)^{n-2+\sigma} {R^{-a} \over 1+ |y|^{2+\alpha}} T^{4+(p-1)\sigma \over n-4} R^{n+1}  \| \psi \|_{*,\alpha , \sigma} \| \bar \phi \|_{in}
\end{align*}
and
\begin{align*}
|B|&\lesssim \left({\la \over d} \right)^{n+2+p \sigma} {R^{(n+1-a)p} \over 1+ |y|^{p(n+1)}}  \| \phi \|_{in} \| \bar \phi \|_{in}\\
&\lesssim \left({\la \over d} \right)^{n-2+\sigma} {R^{-ap} \over 1+ |y|^{2+\alpha}} T^{4+(p-1)\sigma \over n-4} R^{p(n+1)}  \| \phi \|_{in}  \| \bar \phi \|_{in}.
\end{align*}
Using \eqref{defT1} and \eqref{defRT}, we conclude that
$$
\la^{n+2 \over 2}  \left| p \left( (W_2 + \psi + \eta_R \Phi )^{p-1} - W_2^{p-1} \right) \eta_R \bar \Phi \right| \lesssim ( T^\ve + T^\beta ) \| \bar \phi \|_{in}.
$$
We conclude that there exists $b >0$, which depends on $n$ and $\sigma$, so that
$$
|Z(x,t) | \lesssim T^b \| \bar \phi \|_{in}.
$$
This fact gives the validity of \eqref{ll5}.
\end{proof}

\medskip
A last remark in in order.

\medskip

\begin{remark}
Proposition \ref{probesterno} defines the solution to Problem \eqref{sout1} as a function of the initial condition $\psi_0$, in the form of an operator $ \psi = \bar \Psi [\psi_0]$, from a  neighborhood of $0$ in the Banach space $C_0 (\DD)$ equipped with the $C^1$ norm
$\| \psi_0 \|_{L^\infty (\DD) }+ \| \nabla \psi_0 \|_{L^\infty (\DD )}$ into the Banach space of functions $\psi \in L^\infty (\DD)$ equipped with the norm
$ \| \psi \|_{* ,  \alpha , \sigma}$ , defined in \eqref{defstarnorm}.

A closer look to the proof of Proposition \ref{probesterno}, and the Implicit Function Theorem give that $\psi_0 \to \bar \Psi [\psi_0]$ is a diffeomorphism, and that
$$
\| \bar \Psi [\psi_0^1 ] - \bar \Psi [\psi_0^2] \|_{*,\alpha , \sigma} \leq c \left[
\| \psi_0^1 - \psi_0^2\|_{L^\infty (\DD )} +
\| \nabla \psi_0^1 - \nabla \psi_0^2\|_{L^\infty (\DD )} \right],
$$
for some positive constant $c$.
\end{remark}

\section{Finding the parameter functions }\label{parfun}

As mentioned in Section \ref{schema}, we can solve the {\it inner} Problem \eqref{sin2}, provided that certain orthogonality condition
of the "right-hand side" as in \eqref{uffa} are satisfied. In this Section we first derive the system of ordinary differential equations
in $\la_1$ and $d_1$ that is equivalent to get the orthogonality conditions satisfied. Then we find parameter functions  $\la_1 $ and $d_1$
which solve these ODEs. This is done, for any $\phi$ fixed, and satisfying \eqref{bphi}, while $\psi$ is already fixed as the solution
of the {\it outer} problem \eqref{sout}, as stated in Proposition \ref{probesterno}. We conclude the Section showing that the solution $\la_1$ and $d_1$  Lipstitz depends on $\phi$.

\medskip
We start with

\begin{lemma}
  \label{parfunlemma}
  Let $\psi$ be the solution to Problem \eqref{sout}, whose existence and properties are stated in Proposition \ref{probesterno} and Lemma \ref{lemmapsi}. Let $H = H[\la ,  d, \phi , \psi]$ be the function defined in \eqref{defH}.
Let  $\phi$ satisfy the  constraint
\eqref{bphi},
 and parameters $\la_1  $ and $d_1 $ satisfy  the bound \eqref{n1}. We assume $R$ to be a fixed large umber satisfying \eqref{defRT}.
Then, for any $T$ small, we have the validity of the following expansions
\begin{equation}\label{ff1}\begin{split}
\int_{B(0,2R)} H & (y,t) Z_1 (y) \, dy = \la \dot d_1 \left( \int_{\R^n} Z_1^2 \right) \, a_{0,R} \left( 1+ p(t) + \bar q_1 ( {\la_1  \over \la_0 } , {d_1 \over d_0} , \phi )  \right)  \\
&+ \left({\la_0 \over d_0 } \right)^{n-2 +\sigma } \, a_{0,R} \,  \left( 1+  p(t) +  q_1(  {\la_1  \over \la_0 } , {d_1 \over d_0} , \phi ) \right) \\
&+ \la  \dot \la_1 \left({\la_0 \over d_0 } \right)^{n-2} \, R \, a_{0,R}
\left( 1+  p(t) +  q_1(  {\la_1  \over \la_0 } , {d_1 \over d_0} , \phi ) \right)
\end{split}
\end{equation}
and
\begin{equation}\label{ff2}\begin{split}
\int_{B(0,2R)} H & (y,t) Z_0 (y) \, dy = \la \dot \la_1 a_{0,R} A \left(1+ p(t) + \bar q_1 ( {\la_1  \over \la_0 } , {d_1 \over d_0} , \phi) \right)  \\
&- \left({\la_0 \over d_0 } \right)^{n-2} \, B \,  \left[ {\la_1 \over \la_0} - {d_1 \over d_0} \right]   a_{0,R} \, (1+ p(t) + q_1 ( {\la_1  \over \la_0 } , {d_1 \over d_0} , \phi) )\\
 &+ \left({\la_0 \over d_0 } \right)^{n-2 +\sigma  } a_{0,R} \,\left(1 + p(t) +  q_1 ( {\la_1  \over \la_0 } , {d_1 \over d_0} , \phi) \right)\\
 &+   \la \dot d_1  \left({\la_0 \over d_0 } \right)^{n-1}\, R^2 \, a_{0,R}  \, q_1 ( {\la_1  \over \la_0 } , {d_1 \over d_0} , \phi),
\end{split}
\end{equation}
where $A$ and $B$ are the constants given by
\begin{equation}\nonumber  
A= \int_{\R^n } Z_0^2 (y) \, dy, \quad B={p (n-3) \alpha_n \over 2^{n-2}}  \, \left( \int_{\R^n} U^{p-1}  Z_0 \right).
\end{equation}
Here $\sigma \in (0,1)$ is the number fixed in \eqref{sigmadef}.
With $a_{0,R}$ we denote generic constants (i.e. independent of $t$) with $a_{0,R} =1+  o (R^{-1})$, as $R\to \infty$.
Here $p = p(t)$ denotes a generic function, which is smooth for $t \in (0,T)$ so that, for some $\sigma >0$,  $\| p \|_\sigma$ is uniformly bounded, as $T\to 0$. We refer to \eqref{norma} for the definition of the $\| \cdot \|_\sigma$-norm. The explicit expression of $p=p(t)$ changes from line to line. Moreover, $q_1 = q_1 (\eta_1 , \eta_2 , \phi)$ denotes another generic function, which is smooth in its variable, uniformly bounded, as $t \to T$, for $\eta_1$ , $\eta_2 \in L^\infty (0,T)$, and $\phi $ satisfying \eqref{bphi}, with $q_1 (0,0,0 ) = 0$,
and for any $t \in (0,T)$
\begin{equation}\label{ll7}
\left| q_1 [\eta_1^1 , \eta_2 , \phi] (t) - q_1 [\eta_1^2 , \eta_2 , \phi] (t) \right| \lesssim  \|  \eta_1^1 -\eta_1^2 \|_{L^\infty (0,T)}
\end{equation}
\begin{equation}\label{ll8}
\left| q_1[\eta_1 , \eta_2^1 , \phi] (t) - q_1 [\eta_1 , \eta_2^2 , \phi] (t) \right| \lesssim  \|  \eta_2^1 -\eta_2^2 \|_{L^\infty (0,T)}
\end{equation}
\begin{equation}\label{ll9}
\left| q_1 [\eta_1 , \eta_2 , \phi_1] (t) - q_1 [\eta_1 , \eta_2 , \phi_2] (t) \right| \lesssim  T^\ve \| \phi_1 - \phi_2 \|_{in},
\end{equation}
for some $\ve>0$ small.
The explicit expression of $q_1$ also changes from line to line. The function $\bar q_1$ share the same properties as $q_1$, and moreover
$\bar q_1 (\eta_1 , \eta_2 , \bar \phi + \hat \phi) = \bar q_1 (\eta_1 , \eta_2 , \bar \phi ) + \bar q_1 (\eta_1 , \eta_2 ,  \hat \phi)$.

\end{lemma}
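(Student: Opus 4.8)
The plan is to split the right-hand side of \eqref{defH} as $H = H_\psi + H_E + H_\phi$, with $H_\psi = p\,\la_0^{\frac{n-2}2}U^{p-1}\psi(\la_0 y+\xi,t)$, $H_E=\la_0^{\frac{n+2}2}E_2(\la_0 y+\xi,t)$, and $H_\phi=B[\phi]$ as in \eqref{defB}, and to evaluate $\int_{B(0,2R)}H\,Z_i\,dy$ summand by summand. For $|y|\le 2R$ and $T$ small one has $\la_0|y|\ll\delta d$, and the inner cut-off in \eqref{appro2} is identically $1$, so on $B(0,2R)$ the term $H_E$ equals, up to the near-identity rescaling between the $\la_0$-variable used in $H$ and the $\la$-variable used in Lemma \ref{lemma2} and up to the far part $\bar E_2$ (which is $O((\la_0/d_0)^{n-2+\sigma}R^{-2})$ here by Lemma \ref{lemma3}), the function $\eta_R$ times the right-hand side of \eqref{app2}. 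I would substitute that expansion and reduce everything to integrals of $Z_0,Z_1$ against $Z_0,Z_1,U^{p-1},U^{p-1}y_1,h,\partial_{y_1}h$ and the generic bounded functions $f$.

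The algebraic input is parity: $Z_0$ is even in every $y_j$, while $Z_1=\partial_{y_1}U$ is odd in $y_1$ and even in $y_2,\dots,y_n$ (and $h$ is radial), so that every integral of an even function against $Z_1$, or of an odd function against $Z_0$, vanishes — in particular $\int Z_0Z_1=0$, $\int U^{p-1}Z_1=0$, $\int\partial_{y_1}h\,Z_0=0$ — whereas $\int Z_0^2=A$, $\int Z_1^2>0$, $\int U^{p-1}Z_0\neq0$ and $\int U^{p-1}y_1Z_1\neq0$ survive; combined with the identity $-p\int_{\R^n}U^{p-1}Z_0=\frac{n-2}2\int_{\R^n}U^p$ used in \eqref{def3} this pins down the constants $A$ and $B$. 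Since $Z_0=O(|y|^{2-n})$, $Z_1=O(|y|^{1-n})$, $U^{p-1}=O(|y|^{-4})$ at infinity and $n\ge 6$, replacing $\int_{B(0,2R)}$ by $\int_{\R^n}$ in each of the leading quadratic integrals costs only a tail of size $o(R^{-1})$, which is the source of the constants $a_{0,R}=1+o(R^{-1})$. The remaining, lower-order summands of \eqref{app2} each carry an extra positive power of $\la_0/d_0$ (equivalently of $T-t$); after integration over $B(0,2R)$ against slowly- or non-decaying weights (e.g. $y_1Z_1\sim|y|^{2-n}$, whose integral over $B(0,2R)$ grows polynomially in $R$) they acquire at worst a polynomial factor in $R$, harmless for $R$ fixed, and are collected into the generic correctors $p(t)$ (explicit in $t$, vanishing like a positive power of $T-t$, whence $\|p\|_\sigma$ bounded) and $q_1(\la_1/\la_0,d_1/d_0)$ (coming from the dependence of $\la,d$ on $\la_1,d_1$).

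For $H_\psi$ I would use the pointwise bound \eqref{leuco1} from Proposition \ref{probesterno} together with $U^{p-1}=O(|y|^{-4})$ to get $\bigl|\int_{B(0,2R)}H_\psi Z_i\,dy\bigr|\lesssim(\la_0/d_0)^{n-2+\sigma}\max\{T^{\frac{1-\sigma}{n-4}},R^{-2}\}$; as $\psi=\Psi[\la_1,d_1,\phi]$ this contribution carries a $\phi$-dependence and goes into the $q_1(\cdot,\cdot,\phi)$ corrector. For $H_\phi=B[\phi]$, expanding \eqref{defB} and integrating by parts to move the derivatives off $\phi$, the constraint \eqref{bphi} (with $\|\phi\|_{in}\lesssim\max\{T^{\frac{1-\sigma}{n-4}},R^{-2}\}$) again yields a contribution of size $(\la_0/d_0)^{n-2+\sigma}$ times a polynomial in $R$, hence subdominant; crucially $B[\phi]$ is \emph{linear} in $\phi$, so this correction is linear in $\phi$ — which is exactly why it is recorded as $\bar q_1$ rather than $q_1$, why $\bar q_1(\eta_1,\eta_2,\bar\phi+\hat\phi)=\bar q_1(\eta_1,\eta_2,\bar\phi)+\bar q_1(\eta_1,\eta_2,\hat\phi)$, and why the $\bar q_1$'s attach to the $\la\dot d_1$-term in \eqref{ff1} and the $\la\dot\la_1$-term in \eqref{ff2}. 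The Lipschitz estimates \eqref{ll7}–\eqref{ll9} then follow term by term: \eqref{ll7} and \eqref{ll8} from smoothness of the explicit pieces of \eqref{app2} in $\la_1/\la_0,d_1/d_0$ combined with the Lipschitz bounds \eqref{ll1},\eqref{ll2} on $\bar E_2$ and \eqref{ll3},\eqref{ll4} on $\psi$; and \eqref{ll9} from \eqref{ll5} (Lipschitz in $\phi$ of $\psi$, with the gain $T^a$) together with linearity of $B[\phi]$.

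\textbf{Main obstacle.} The hard part is not any single computation but the bookkeeping: one must check that every term produced by $H_E$, $H_\psi$, $H_\phi$ lands either in one of the three displayed leading terms — with exactly the constants $\int_{\R^n}Z_1^2$, $A$, $B$ and the exact power of $\la_0/d_0$ — or in the generic correctors $p(t),q_1,\bar q_1$, and that the truncation to $B(0,2R)$ produces precisely the $a_{0,R}=1+o(R^{-1})$ factors on the leading quadratic integrals while the worst polynomial-in-$R$ losses on the subdominant terms are absorbed by their $\la_0/d_0$-smallness. The two genuinely delicate points are controlling the integrals against non-decaying weights so that their coefficients really do carry enough smallness, and keeping the dependence on $\phi$ strictly linear throughout, which is what yields the additivity of $\bar q_1$.
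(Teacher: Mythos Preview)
Your decomposition $H=H_\psi+H_E+H_\phi$ is exactly the paper's (there written $H_1,H_2,H_3$), and your treatment of parity, of the tail replacement $\int_{B(0,2R)}\to\int_{\R^n}$ producing $a_{0,R}=1+o(R^{-1})$, of the $\psi$-contribution via Proposition~\ref{probesterno}, and of the linearity of $B[\phi]$ giving the additivity of $\bar q_1$, all match the paper's argument closely. Two minor differences: for $H_\psi$ the paper Taylor-expands $\psi(\xi+\la_0 y,t)=\psi(\xi)+\nabla\psi(\bar\xi)\la_0 y$ and uses the gradient bound \eqref{leuco11} together with the oddness of $Z_1$ to kill the constant term, rather than your direct use of \eqref{leuco1}; and for $B[\phi]$ the paper simply invokes the gradient part of \eqref{bphi} rather than integrating by parts. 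Both variants work.

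There is one genuine point you skate over. In the $Z_0$-projection of $H_E$, the piece $\dot\la_0\la_1\,Z_0(y)$ inside $E_{2,\la}$ is \emph{not} lower order relative to $\la\dot\la_1\,Z_0$ --- in fact $\dot\la_0\la_1\sim (T-t)^{(n-4+\sigma)/(n-4)}$ is larger than $\la\dot\la_1\sim (T-t)^{(n-2+\sigma)/(n-4)}$ --- so it cannot be absorbed into $p(t)$ or $q_1$ by a naive power count. The paper handles this by invoking the defining relation \eqref{defla0} for $\la_0$, namely $\dot\la_0\int_{\R^n}Z_0^2=\dfrac{p\alpha_n}{2^{n-2}}\Bigl(\dfrac{\la_0}{d_0}\Bigr)^{n-2}\dfrac{1}{\la_0}\int_{\R^n}U^{p-1}Z_0$, which converts $\dot\la_0\la_1\int Z_0^2$ into $\dfrac{p\alpha_n}{2^{n-2}}\Bigl(\dfrac{\la_0}{d_0}\Bigr)^{n-2}\dfrac{\la_1}{\la_0}\int U^{p-1}Z_0$ and lets it combine with the $-\dfrac{p(n-2)\alpha_n}{2^{n-2}}\Bigl(\dfrac{\la_0}{d_0}\Bigr)^{n-2}\dfrac{\la_1}{\la_0}\int U^{p-1}Z_0$ piece of $E_{2,\la}$; this cancellation is precisely what turns the coefficient $(n-2)$ into the $(n-3)$ appearing in $B$. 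Your sentence ``combined with the identity \dots\ this pins down the constants $A$ and $B$'' gestures at this, but the mechanism is not the Pohozaev-type identity you cite --- it is the ODE \eqref{defla0} itself, and without it the $\dot\la_0\la_1$ term would wreck the expansion \eqref{ff2}.
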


\medskip
\begin{proof}
We write
\begin{equation}\label{hh3}
H= \sum_{j=1}^3 H_j, \quad H_1 =  p \la^{n-2 \over 2} U^{p-1} \psi (\la y + \xi , t (\tau )) , \quad
H_3=  B[ \phi].
\end{equation}
The proof of \eqref{ff1} is consequence of the following three expansions, as $T \to 0$,
\begin{equation}\label{hh4}\begin{split}
&\int_{B(0,2R)} H_2 Z_1 \, dy = \la \dot d_1 \left( \int_{\R^n} Z_1^2 \right) \, a_{0,R} \left( 1+  q_1 ( {\la_1  \over \la_0 } , {d_1 \over d_0} , 0) \right)  \\
&- \left({\la_0 \over d_0 } \right)^{n-1} {p (n-2) \alpha_n \over 2^{n-1}}  \left( \int_{\R^n} U^{p-1} y_1  Z_1 \right) a_{0,R}  \,   \left( 1+ p(t) +  q_1 ( {\la_1  \over \la_0 } , {d_1 \over d_0} , 0) \right) \\
&+ \la  \dot \la_1 \left({\la_0 \over d_0 } \right)^{n-2} \, R \, a_{0,R} \left( 1+  p(t) +  q_1 ( {\la_1  \over \la_0 } , {d_1 \over d_0} , 0) \right),
\end{split}
\end{equation}
\begin{equation}\label{hh5}\begin{split}
\int_{B(0,2R)} H_1 Z_1 \, dy &= \left({\la_0 \over d_0 } \right)^{n-2 +\sigma }  \, a_{0,R} \, \left(1+ p(t) +  q_1 ( {\la_1  \over \la_0 } , {d_1 \over d_0} , \phi ) \right),
\frac{}{}\end{split}
\end{equation}
and
\begin{equation}\label{hh6}\begin{split}
\int_{B(0,2R)} H_3 Z_1 \, dy &= \left({\la_0 \over d_0 } \right)^{2n-2}  \, a_{0,R} \, O( R^{-2} ) \, \bar q_1 ( 0 , 0, \phi)\\
&+ \la_0 \left({\la_0 \over d_0} \right)^{n-2}  \dot d_1  \, O(R^{-1}) \bar q_1 ( 0 , 0 , \phi )
\end{split}
\end{equation}
where we are using the same notations as in the statement of the Lemma.

\medskip
\noindent
{\it Proof of \eqref{hh4}}. \ \ In the region $y \in B(0,2R)$, the function
$$
H_2 = \la^{n+2 \over 2} \left[ e_1 + e_2  - \left({\la_0 \over d_0} \right)^{n-2} \left[ \Delta w + p W_0^{p-1} w \right]
\eta \left( { |x-\xi| \over b \, d_0 } \right) \right] (\la y + \xi , t (\tau ))
$$
has been described in Lemma \ref{lemma2}. Referring to \eqref{app2}, we see immediately that $\int_{B(0,2R)} E_{2,\la } (y,t) Z_1 (y) \, dy =0$, for all $t$, because of symmetry.
We get
\begin{equation*}\begin{split}
\int_{B(0,2R)} & H_2 Z_1 \, dy  = \la \, [ \dot d_1 - {d_0 \over 1+ d} ] \int_{B(0,2R)} Z_1^2 (y) \, dy \\
&+ \la  \dot d_1  \left( {\la_0 \over d_0 } \right)^{n-2}  \int_{B(0,2R)} {\partial h \over \partial y_1} Z_1 (y) \, dy \\
&- \left({\la_0 \over d_0 } \right)^{n-1} {p (n-2) \alpha_n \over 2^{n-1}}  \left( \int_{B(0,2R)} U^{p-1} y_1  Z_1 \right)  \\
&+ \la \dot d_1  \left({\la_0 \over d_0 } \right)^{n-1} O \left( \int_{B(0,2R)} |Z_1 (y) | \, dy \right)\\
& + \la \la_1  \left({\la_0 \over d_0 } \right)^{n-2} O \left( \int_{B(0,2R)} |Z_1 (y) | \, dy \right)\\
& + \left({\la_0 \over d_0 } \right)^{n+2} O \left( \int_{B(0,2R)} |Z_1 (y) | \, dy \right).
\end{split}
\end{equation*}
Expansion \eqref{hh4} follows after we observe that
$$
\int_{B(0,2R)} Z_1^2 (y) \, dy = (\int_{\R^n} Z_1^2 (y) \, dy) (1+ O(R^{2-n} )),
\quad \int_{B(0,2R)} {\partial h \over \partial y_1} Z_1 (y) \, dy = O(R^{-2} ),
$$
$$
\int_{B(0,2R)} U^{p-1} y_1  Z_1  = \left( \int_{\R^n} U^{p-1} y_1  Z_1 \right) (1+ O(R^{-2}) ), \quad
\int_{B(0,2R)} |Z_1 | = O(R),
$$
for $R$ large.

\medskip
\noindent
{\it Proof of \eqref{hh5}}. \ \ From the result of Proposition \ref{probesterno}, and more specifically estimate \eqref{leuco1}-\eqref{leuco11}, we expand
$$
\psi (\xi + \la y , t ) = \psi (\xi ) + \nabla \psi (\bar \xi ) \la y
$$
for some $\bar \xi$. By symmetry, the integral of the first term is zero, so that we get
$$
\int_{B(0,2R)} H_1 Z_1 = ({\la_0 \over d_0} )^{n-2+\sigma } \left( 1+  p(t) +  q_1 ( {\la_1  \over \la_0 } , {d_1 \over d_0} , 0) \right) \, (\int_{B(0,2R)} U^{p-1} y_1 Z_1 ).
$$
Thus we get the validity of \eqref{hh5}.

\medskip
\noindent
{\it Proof of \eqref{hh6}}. \ \ From the definition of the function $B[\phi]$, we immediately observe that this is a linear function of $\phi$, and it does not depend on $\la_1$. A direct computation and the use of the estimate on $\phi$ given in \eqref{bphi} gives
\begin{equation*}\begin{split}
&\int_{B(0,2R)} H_3 Z_1 = \la \dot \la \int_{B(0,2R)}  \left[ {n-2 \over 2} \phi (y, t) + \nabla \phi(y, t) \cdot y \right] Z_1
+ \la \dot d \int_{B(0,2R)} {\partial \phi \over \partial y_1} (y, t) Z_1\\
 &= \left({\la \over d } \right)^{2n-2 +\sigma }  \, a_{0,R} \, O( R^{-1+\alpha} ) \, q_1 ( 0 , 0, \phi)
+ \la \left({\la \over d} \right)^{n-2+\sigma }  \dot d_1  \,  \bar q_1 ( 0 , 0 , \phi ) .
\end{split}
\end{equation*}

\medskip
The proof of \eqref{ff2} is consequence of the following three expansions, as $T \to 0$,
\begin{equation}\label{hh7uu}\begin{split}
&\int_{B(0,2R)} H_2 Z_0 \, dy = \la \dot \la_1 a_{0,R} \left(1+ p(t) + q_1 ( {\la_1  \over \la_0 } , {d_1 \over d_0} , 0) \right) \int_{\R^n } Z_0^2 (y) \, dy \\
&- \left({\la_0 \over d_0 } \right)^{n-2} {p (n-3) \alpha_n \over 2^{n-2}}  \left[ {\la_1 \over \la_0} - {d_1 \over d_0} \right] \left( \int_{\R^n} U^{p-1}  Z_0 \right) \times \\
&\times a_{0,R} \, (1+ p(t) + q_1 ( {\la_1  \over \la_0 } , {d_1 \over d_0} , 0) )\\
&+   \la \dot d_1  \left({\la_0 \over d_0 } \right)^{n-1}\, O (R^2 ) \, q_1 ( {\la_1  \over \la_0 } , {d_1 \over d_0} , 0) + \left({\la_0 \over d_0 } \right)^{n+2} O   (R^{2} ) \, q_1 ( {\la_1  \over \la_0 } , {d_1 \over d_0} , 0) ,
\end{split}
\end{equation}
\begin{equation}\label{hh8}\begin{split}
\int_{B(0,2R)} H_1 Z_0 \, dy &= \left({\la_0 \over d_0 } \right)^{n-2 +\sigma }  \, a_{0,R} \,  \, \left( 1+ p(t) +  q_1 ( {\la_1  \over \la_0 } , {d_1 \over d_0} , \phi ) \right),
\end{split}
\end{equation}
and
\begin{equation}\label{hh9}\begin{split}
\int_{B(0,2R)} H_3 Z_0 \, dy &= \left({\la_0 \over d_0 } \right)^{2n-2+\sigma }  \, a_{0,R} \, O( R^{-1+\alpha} ) \, q_1 ( 0 , 0, \phi)\\
&+ \la_0 \left({\la_0 \over d_0} \right)^{n-2+\sigma}  \dot d_1  \,  \bar q_1 ( 0 , 0 , \phi ),
\end{split}
\end{equation}
where we are using the same notations as in the statement of the Lemma. The proofs of \eqref{hh8} and \eqref{hh9} are similar to the ones of \eqref{hh5} and \eqref{hh6} respectively, so we leave them to the reader.

\medskip
\medskip
\noindent
{\it Proof of \eqref{hh7uu}}. \ \  Referring again to \eqref{app2} for the expression of $H_2$ in the region we are considering,  we see immediately that $\int_{B(0,2R)} E_{2,d } (y,t) Z_0 (y) \, dy =0$, for all $t$, because of symmetry.
We get
\begin{equation*}\begin{split}
\int_{B(0,2R)} & H_2 Z_0 \, dy  =  [ \la \dot \la_1 + \dot \la_0 \la_1 ] \int_{B(0,2R)} Z_0^2 (y) \, dy \\
&+  [ \la \dot \la_1 + \dot \la_0 \la_1 ] \left( {\la_0 \over d_0 } \right)^{n-2}  \int_{B(0,2R)} ({n-2 \over 2} h + \nabla h \cdot y)  Z_0 (y) \, dy \\
&- \left({\la_0 \over d_0 } \right)^{n-2} {p (n-2) \alpha_n \over 2^{n-2}}  \left[ {\la_1 \over \la_0} - {d_1 \over d_0} \right] \left( \int_{B(0,2R)} U^{p-1}  Z_0 \right) \times \\
&\times (1+ p(t) + q_1 ( {\la_1  \over \la_0 } , {d_1 \over d_0} , 0) )\\
&+  \left[ \la \dot d_1  \left({\la_0 \over d_0 } \right)^{n-1} +  \la \dot \la_1  \left({\la_0 \over d_0 } \right)^{n-2}\, \right] O (R^2 ) \, q_1 ( {\la_1  \over \la_0 } , {d_1 \over d_0} , 0) \\
& + \left({\la_0 \over d_0 } \right)^{n+2} O   (R^2 ) \, q_1 ( {\la_1  \over \la_0 } , {d_1 \over d_0} , 0) .
\end{split}
\end{equation*}
Using \eqref{defla0}, that is $\dot \la_0 \int_{\R^n} Z_0^2 = {p \alpha_n \over 2^{n-2}} ({\la_0 \over d_0} )^{n-2} \int_{\R^n} U^{p-1} Z_0 $,
we get
\begin{equation*}\begin{split}
\int_{B(0,2R)} & H_2 Z_0 \, dy  =  \la \dot \la_1 a_{0,R} \left(1+ p(t) + q_1 ( {\la_1  \over \la_0 } , {d_1 \over d_0} , 0) \right) \int_{\R^n } Z_0^2 (y) \, dy \\
&- \left({\la_0 \over d_0 } \right)^{n-2} {p (n-3) \alpha_n \over 2^{n-2}}  \left[ {\la_1 \over \la_0} - {d_1 \over d_0} \right] \left( \int_{\R^n} U^{p-1}  Z_0 \right) \times \\
&\times a_{0,R} \, (1+ p(t) + q_1 ( {\la_1  \over \la_0 } , {d_1 \over d_0} , 0) )\\
&+   \la \dot d_1  \left({\la_0 \over d_0 } \right)^{n-1}\, O (R^2 ) \, q_1 ( {\la_1  \over \la_0 } , {d_1 \over d_0} , 0) + \left({\la_0 \over d_0 } \right)^{n+2} O   (R^{2} ) \, q_1 ( {\la_1  \over \la_0 } , {d_1 \over d_0} , 0) .
\end{split}
\end{equation*}
Thus \eqref{hh7uu} follows.
\end{proof}

\medskip
Next we get the existence and Lipschtz properties of $\la_1$ and $d_1$ that make the required orthogonality conditions.

\medskip
\begin{prop}\label{parfun1}
Let $\psi$ be the solution to Problem \eqref{sout}, whose existence and properties are stated in Proposition \ref{probesterno} and Lemma \ref{lemmapsi}. Let $H = H[\la ,  d, \phi , \psi]$ be the function defined in \eqref{defH}.
For any function $\phi$ satisfying the  constraint
\eqref{bphi},
 there exist functions $\la_1 = \la_1 [\phi] $ and $d_1= d_1 [\phi] $, which satisfy the bound \eqref{n1}, for which
\begin{equation}\label{ma1}\begin{split}
\int_{B(0,2R)} H (y,t) \, Z_0 (y) \, dy& = 0 \quad {\mbox {for all}} \quad t \in (0,T)\\
\int_{B(0,2R)} H(y,t)\, Z_1 (y) \, dy &= 0 , \quad {\mbox {for all}} \quad t \in (0,T).
\end{split}
\end{equation}
Moreover, if $\phi_1$ and $\phi_2$ satisfy \eqref{bphi}, one has
\begin{equation}\label{ll10}\begin{split}
\| \dot \la_1 [\phi_1] - \dot \la_1 [\phi_2] \|_{1+\sigma \over n-4} +
\| \dot d_1 [\phi_1] - \dot d_1 [\phi_2] \|_{1+\sigma\over n-4} \leq T^b \| \phi_1 -\phi_2 \|_{in}
\end{split}
\end{equation}
for some fixed number $b >0$, which depends on $n$ and $\sigma$.

\end{prop}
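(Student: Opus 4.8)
\medskip
\noindent\textit{Proof strategy.}
The plan is to use Lemma~\ref{parfunlemma} to convert the two orthogonality conditions in \eqref{ma1} into a system of integro-differential equations for the pair $(\dot\la_1,\dot d_1)$, and then to solve this system by a contraction mapping argument in the space of functions with finite $\|\cdot\|_{1+\sigma\over n-4}$ norm. By \eqref{ff1}--\eqref{ff2}, since $a_{0,R}=1+o(R^{-1})$, $A=\int_{\R^n}Z_0^2>0$ and $\int_{\R^n}Z_1^2>0$ are bounded away from zero for $R$ large, and since the two cross-coupling coefficients (the $({\la_0\over d_0})^{n-2}R$ factor in front of $\dot\la_1$ in \eqref{ff1} and the $({\la_0\over d_0})^{n-1}R^2$ factor in front of $\dot d_1$ in \eqref{ff2}) carry extra powers of $T-t$ and are therefore small for $T$ small with $R$ fixed, I can invert this nearly triangular $2\times 2$ system to obtain, schematically,
\[
\dot d_1(t)= -\,{({\la_0\over d_0})^{n-2+\sigma}\over\la\,\int_{\R^n}Z_1^2}\,\bigl(1+p(t)+q_1\bigr)+\text{(small in $\dot\la_1,\la_1,d_1$)},
\]
\[
\dot\la_1(t)= -\,{(n-3)^2\over n-4}\,{\la_1(t)\over T-t}\,\bigl(1+p(t)+q_1\bigr)+g(t),
\]
where $g$ collects a source of size $(T-t)^{1+\sigma\over n-4}$ together with the $d_1/d_0$ contribution and all genuinely lower order terms, and where the dependence of the already-constructed outer solution $\psi=\psi[\la_1,d_1,\phi]$ on the parameters is absorbed into the generic functions $q_1$ with Lipschitz properties \eqref{ll7}--\eqref{ll8}. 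The distinguished coefficient $-{(n-3)^2\over n-4}$ comes from the leading balance in \eqref{ff2}: using $\la\approx\la_0$, the definition of $\ell$ in \eqref{def3}, \eqref{defla0}, and the identity $-p\int_{\R^n}U^{p-1}Z_0={n-2\over 2}\int_{\R^n}U^p$, one finds $B\,\ell^{n-4}/A=-(n-3)^2/(n-4)$; the crucial size computation is that $({\la_0\over d_0})^{n-2}B\,{\la_1\over\la_0}$ and $\la\dot\la_1$ are of the \emph{same} order $(T-t)^{n-2+\sigma\over n-4}$, so this term cannot be treated perturbatively.

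The core step is therefore to solve the scalar linear problem $\dot\la_1=-a\,{\la_1\over T-t}+g$ with $a={(n-3)^2\over n-4}$, subject to $\la_1(t)=\int_t^T\dot\la_1(s)\,ds$, i.e. $\la_1(T)=0$. Using the integrating factor $(T-t)^{a}$ I obtain the explicit solution
\[
\la_1(t)=(T-t)^{-a}\int_t^T(T-s)^{a}g(s)\,ds,\qquad
\dot\la_1(t)=g(t)-a(T-t)^{-a-1}\int_t^T(T-s)^{a}g(s)\,ds,
\]
and if $|g(s)|\lesssim M(T-s)^{1+\sigma\over n-4}$ then both $\la_1$ and $\dot\la_1$ are controlled by $M$ in the relevant weighted norms, because $a+{1+\sigma\over n-4}+1>0$; thus $g\mapsto\dot\la_1$ defines a \emph{bounded} linear operator $\mathcal L$ on the $\|\cdot\|_{1+\sigma\over n-4}$ space. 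This is where the argument is genuinely delicate: for $n\ge 6$ one has $a>1+{1+\sigma\over n-4}$, so the singular Volterra operator $h\mapsto {a\over T-t}\int_t^T h$ has operator norm exceeding $1$ and the Neumann series diverges, yet the resolvent still exists and is bounded, precisely because imposing the final condition $\la_1(T)=0$ selects the non-blowing solution branch. Equivalently, the choice of $\ell$ in \eqref{def3} is exactly what puts this linear operator into the admissible class.

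With $\mathcal L$ in hand I close the fixed point as follows. Define a map $\Theta$ on the ball $\{\,\|\dot\la_1\|_{1+\sigma\over n-4}+\|\dot d_1\|_{1+\sigma\over n-4}\le C\,\}$, with $C$ a fixed $O(1)$ constant comparable to $\|\mathcal L\|$ times the $O(1)$ size of the leading sources: given an input pair, first solve the $d_1$-equation for $\dot d_1$ --- a contraction on its own, the only self-coupling being the Lipschitz-small factor $q_1$ --- then substitute the resulting $d_1$ into the $\la_1$-equation and apply $\mathcal L$ to its source $g$. Using that $q_1(0,0,0)=0$ with $q_1$ Lipschitz, that $\la_1/\la_0$, $d_1/d_0$ and $\|\phi\|_{in}$ are all $O(\max\{T^{\ve},R^{-\ve}\})$, that $\|p\|_\sigma$ is bounded, and that the cross-couplings and the $d_1/d_0$ term each gain an extra power $(T-t)^{1\over n-4}$, I verify that $\Theta$ maps the ball into itself and is a contraction there once $R$ is large and $T$ small; the contraction mapping principle then produces a unique fixed point $(\dot\la_1[\phi],\dot d_1[\phi])$, whose integrals $\la_1,d_1$ satisfy \eqref{n1}, and \eqref{ma1} holds by construction. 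For the Lipschitz dependence \eqref{ll10} I subtract the fixed-point identities for $\phi_1$ and $\phi_2$: the difference of the right-hand sides splits into a part contractive in $(\dot\la_1,\dot d_1)$ with the same small constant, plus the explicit $\phi$-dependence, which enters only through $q_1(\cdot,\cdot,\phi)$ --- controlled by $T^{a}\|\phi_1-\phi_2\|_{in}$ via \eqref{ll9} --- and through $\psi=\psi[\la_1,d_1,\phi]$ inside $H_1$ --- controlled by ${\bf c}\|\phi_1-\phi_2\|_{in}$ via \eqref{ll5} of Lemma~\ref{lemmapsi} together with the gradient bound \eqref{leuco11}; absorbing the contractive part yields \eqref{ll10}. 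The main obstacle, as indicated above, is the singular linear term ${(n-3)^2\over n-4}{\la_1\over T-t}$, whose $O(1)$ coefficient exceeds the natural contraction threshold, forcing one to invert that operator in closed form through the final condition $\la_1(T)=0$ rather than by iteration.
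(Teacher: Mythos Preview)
Your approach is essentially the paper's: use Lemma~\ref{parfunlemma} to reduce \eqref{ma1} to a near-triangular ODE system, invert the singular linear part for $\la_1$ explicitly by an integrating factor with $\la_1(T)=0$, then close by contraction on the small remainder; the Lipschitz estimate \eqref{ll10} follows by subtracting fixed-point identities and invoking \eqref{ll9} and \eqref{ll5}. One caveat: the paper records the linear coefficient as $+(n-3)$, rewriting the equation as $\frac d{dt}\bigl[(T-t)^{n-3}\la_1\bigr]=\cdots$ and integrating from $t$ to $T$ so that the homogeneous mode $(T-t)^{-(n-3)}$ (which blows up) is killed by the terminal condition; with your sign the homogeneous mode $(T-t)^{a}$ \emph{decays}, so $\la_1(T)=0$ no longer selects a unique branch, and in any case your displayed formula $\la_1=(T-t)^{-a}\int_t^T(T-s)^ag$ actually solves $\dot\la_1-\tfrac a{T-t}\la_1=-g$, not $\dot\la_1+\tfrac a{T-t}\la_1=g$ --- but either way a bounded right inverse $\mathcal L$ exists (integrate forward from $0$ if your sign is correct) and the contraction argument is unchanged.
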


\begin{proof} The result of Lemma \ref{parfunlemma} is telling us that solving equation \eqref{ma1} is equivalent to solving a certain non linear
non local system of ordinary differential equation of first order in $\la_1$ and $d_1$.
Indeed, from \eqref{ff1}   we get that the second equation in \eqref{ma1} is equivalent to
\begin{equation}\label{ma0}\begin{split}
\dot d_1 + A_R (T-t)^{2 \over n-4} &= (T-t)^{1+\sigma \over n-4} \left( p(t) +
q_1 \left( {\la_1 \over \la_0} , {d_1 \over d_0} , \phi \right) \right)\\
&+ (T-t)^{1+ {1\over n-4}} \dot \la_1 \left( 1+ p(t) +
q_1 \left( {\la_1 \over \la_0} , {d_1 \over d_0} , \phi \right) \right)
\end{split}
\end{equation}
where $A_R$ is a constant (independent of $t$), with the property that $A_R \sim A_\infty (1+ O(R^{-1} ))$, as $R\to \infty$, with a fixed positive constant $A_\infty$.
The functions $p$ and $q_1$ have the same properties as stated in Lemma \ref{parfunlemma}.

\medskip
We next look at \eqref{ff2} to get the differential equation corresponding to the first equation in \eqref{ma1}. Using \eqref{def2}-\eqref{def3}-\eqref{defla0}, we get
\begin{equation*} \begin{split}
\dot \la_1 - {(n-3 ) \over (T-t)} \la_1 &=  (T-t)^{1+\sigma \over n-4} f(t) + {\la_1 \over (T-t) } q_1 \left( {\la_1 \over \la_0} , {d_1 \over d_0} , \phi \right) \\
&
+ (T-t)^{1+ {1\over n-4}} \dot d_1 \left( 1+ p(t) +
q_1 \left( {\la_1 \over \la_0} , {d_1 \over d_0} , \phi \right) \right)
\end{split}
\end{equation*}
Here $f=f(t)$ stands for a uniformly bounded in $(0,T)$, as $T \to 0$. It is convenient to multiply the above equation against $(T-t)^{n-3}$, and re-write it as
\begin{equation}\label{ma2} \begin{split}
 {d \over dt} \left( (T-t)^{n-3} \la_1 \right) &=  (T-t)^{n-3+{1+\sigma \over n-4}} f(t) +  (T-t)^{n-4}\, \la_1 \, q_1 \left( {\la_1 \over \la_0} , {d_1 \over d_0} , \phi \right) \\
&
+ (T-t)^{n-2+ {1\over n-4}} \dot d_1 \left( 1+ p(t) +
q_1 \left( {\la_1 \over \la_0} , {d_1 \over d_0} , \phi \right) \right)
\end{split}
\end{equation}
\medskip
Let ${\bf d}= {\bf d}(t) $ and $\Lambda = \Lambda (t)$ be the solution to
\begin{equation} \nonumber  \begin{split}
\dot {\bf d} +  A_R (T-t)^{2 \over n-4} &= (T-t)^{1+\sigma\over n-4} p(t)  \\
 {d \over dt} \left( (T-t)^{n-3} \Lambda \right) &=  (T-t)^{n-3+{1+ \sigma \over n-4}} f(t) ,
\end{split}
\end{equation}
given by
\begin{equation}\label{ma4} \begin{split}
{\bf d} (t) &= \int_t^T  \left[- A_R (T-s)^{1+ \sigma\over n-4} + (T-s)^{1+ \sigma\over n-4} p(s) \right]\, ds  \\
\Lambda (t) &=  {1\over (T-t)^{n-3}} \int_t^T (T-s)^{n-3+{1+ \sigma\over n-4}} f(s) \, ds.
\end{split}
\end{equation}
These functions satisfy the bound \eqref{n1}.
Then $d_1 = {\bf d} + d$, $\la_1 = \Lambda + \la$ solves the system \eqref{ma0}-\eqref{ma2} if
\begin{equation}\nonumber \begin{split}
d (t) &=\int_t^T  (T-s)^{1+ \sigma \over n-4} \,
q_1 \left( {\Lambda +\la \over \la_0} , {{\bf d} + d \over d_0} , \phi \right) (s) \, ds\\
&+ \int_t^T (T-s)^{1+ {1\over n-4}} ( \dot \Lambda + \dot \la ) (s) \left( 1+ p(s) +
q_1 \left( {\Lambda +\la \over \la_0} , {{\bf d} + d  \over d_0} , \phi \right) \right)\, ds \\
\la (t) &=  {1\over (T-t)^{n-3}} \int_t^T (T-s)^{n-4}\, \la_1 (s) \, q_1 \left( {\Lambda + \la \over \la_0} , {{\bf d} + d \over d_0} , \phi \right)\, ds \\
&
+ {1\over (T-t)^{n-3}} \int_t^T (T-s)^{n-2+ {1\over n-4}} \dot d_1 (s) \left( 1+ p(s) +
q_1 \left( {\Lambda +\la \over \la_0} , {{\bf d} + d \over d_0} , \phi \right) \right).
\end{split}
\end{equation}
Using again the result of Lemma \ref{parfun}, and in particular \eqref{ll7}-\eqref{ll8}, one can  solve \eqref{ma4} with a fixed point argument based on the Contraction Mapping Theorem.
Estimate \eqref{ll10} follows from \eqref{ma4} and \eqref{ll9}.

\end{proof}

\section{Solving the inner problem}\label{inner}

The last step in the proof of our result is to solve  the {\it inner} Problem \eqref{sin2},  after we already defined the {\it outer } solution $\psi$, whose existence and properties are contained in Proposition \ref{probesterno} and Lemma \ref{lemmapsi} in Section \ref{parfun}, and the parameter functions $d_1$, $\la_1$,  as in Proposition \ref{parfun1} in Section \ref{parfun}.

\medskip
The key ingredient to solve \eqref{sin2} for functions $\phi$ satisfying \eqref{bphi} is the resolution of following linear problem: Given a sufficiently large number $R>0$,
  construct a solution $(\phi , e_0) $ to the initial value problem
\be \label{p110}
\phi_\tau  =
\Delta \phi + pU(y)^{p-1} \phi + h(y,\tau )  \inn B_{2R} \times (\tau_0, \infty )
\ee
$$
\phi(y,\tau_0) = e_0Z (y)  \inn B_{2R}
$$
provided that $h$ satisfies certain time-space decay rate and certain orthogonality conditions.
Here $Z$ is the positive radially symmetric bounded eigenfunction
associated to the only negative eigenvalue to the linear problem \eqref{eigen0}.
We recall that $\tau = \tau (t)$ is given in \eqref{deftau}, as
$$
\tau (t) \sim  {n-4 \over (n-2) \ell}  (T-t)^{-1-{2\over n-4}}  \quad {\mbox {as}} \quad t \to T, \quad {\mbox {and}} \quad \tau_0 = \tau (0).
$$
In the $\tau$-variable, the bound \eqref{bphi} on $\phi$ reads as
\begin{equation}\nonumber 
\begin{split}
\| \phi \|_{in} & = \sup_{\tau > \tau_0, \, y \in B(0,2R)}  \tau^\nu \, R^{-n-1+a} \, (1+ |y|^{n+1}) |\phi (y,\tau)| \\
&+\sup_{\tau > \tau_0, \, y \in B(0,2R)}  \tau^\nu \, R^{-n-1+a} \, (1+ |y|^{n+2}) |\nabla \phi (y,\tau)| \lesssim  R^{-1}
\end{split}
\end{equation}
where $\nu $ is given by
$$
\tau^{-\nu} \sim \left({\la \over d} \right)^{n-2+\sigma }  \sim (T-t)^{n-2 +\sigma \over n-4}, \quad {\mbox {as}} \quad t\to T.
$$
Here  $\sigma \in ({1\over 2} , 1)$  is the constant (which can be thought close to $1$) introduced in \eqref{n1}.
The solution for Problem \eqref{p110} we build has $R$-dependent uniform bounds for right hand-side $h$ with $L^\infty$-weighted norms of the type
$$
\|h\|_{\nu, 2+a } := \sup_{\tau > \tau_0 }  \sup _{y\in B_{2R}}   \tau^{\nu} \, \la^2 \, (1+ |y|^{2+a} ) \, |h( y ,\tau ) | .
$$
Also, for a function $p=p(\tau)$ we denote
$$
\|p\|_{\nu}:= \sup_{\tau > \tau_0 } \tau^{\nu} |p(\tau ) | .
$$

\medskip
We have the validity of

\begin{prop} \label{prop0}
Let $R>0$ be large enough. For any  $\tau_0 $ sufficiently large (depending on $R$), for any  $h=h(y,\tau)$  with  $\|h\|_{\nu, 2+a} <+\infty$
that satisfies for all $j=0,1$
\be
 \int_{B (0,2R)} h(y ,\tau)\, Z_j (y) \, dy\ =\ 0  \foral \tau\in (\tau_0, \infty),
\label{ortio}\ee
there exist  $\phi = \phi [h]$  and $e_0 = e_0 [h]$ which solve Problem $\equ{p110}$. They define linear operators of $h$
that satisfy the estimates
\be
 (1+ |y| )  | \nabla \phi (y,\tau ) | +  |\phi(y,\tau) |  \ \lesssim   \  \tau^{-\nu} \,
  \frac {R^{n+1-a}}{1+ |y|^{n+1}} \|h \|_{\nu, 2+a} ,
\label{cta1}\ee
and
\be
 | e_0[h]| \, \lesssim \,  \|h\|_{ \nu, 2+a}.
\label{cot2n}\ee
\end{prop}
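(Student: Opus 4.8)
The plan is to reduce the problem on the ball $B_{2R}$ to a problem for which a standard linear theory is available, namely to treat the parabolic equation $\phi_\tau = \Delta\phi + pU^{p-1}\phi + h$ together with the spectral projections of $L_0 = \Delta + pU^{p-1}$. First I would set up the spectral decomposition: $L_0$ (acting on the ball with, say, Neumann or zero boundary data is less natural — so it is cleaner to work with the equation on all of $\R^n$ after extending $h$ by zero outside $B_{2R}$, or equivalently to treat $B_{2R}$ with a Dirichlet-type cut-off boundary condition and absorb the boundary error). The operator $L_0$ has exactly one negative eigenvalue $\mu_0$ with positive radial eigenfunction $Z$, a $(n+1)$-dimensional kernel spanned by $\nabla U$ and $Z_0$ (of which, in the even-in-$y_j$ class, only $Z_0$ and $Z_1 = \partial_{y_1} U$ survive), and the rest of the spectrum positive. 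The idea is: the negative mode is killed by the freedom in the initial data — this is exactly why we prescribe $\phi(y,\tau_0) = e_0 Z(y)$ and let $e_0$ be a free parameter; the two zero modes $Z_0, Z_1$ are killed by the orthogonality hypothesis \eqref{ortio} on $h$; and on the positive part of the spectrum the equation is (uniformly) coercive, giving decay.

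The key steps, in order, are as follows. (1) Decompose $\phi = \phi_- + \phi_0 + \phi_+$ along the negative eigenspace $\langle Z\rangle$, the kernel $\langle Z_0, Z_1\rangle$, and its orthogonal complement $X_+$. (2) For the positive part, solve $\partial_\tau \phi_+ = L_0\phi_+ + \Pi_+ h$ with $\phi_+(\cdot,\tau_0)=0$ by Duhamel; since $L_0|_{X_+} \le -c < 0$, the semigroup contracts and one gets $\|\phi_+(\tau)\| \lesssim \tau^{-\nu}\|h\|_{\nu,2+a}$ — here one must upgrade an $L^2$-type bound to the pointwise weighted bound in \eqref{cta1} by standard parabolic regularity (local $L^\infty$ and gradient estimates applied on unit parabolic cylinders and then rescaled), which is where the factor $R^{n+1-a}$ appears, coming from comparing the $L^2(B_{2R})$ norm of a weighted-$L^\infty$ function. (3) For the zero part, $\partial_\tau \phi_0 = \Pi_0 h$ with $\phi_0(\cdot,\tau_0)=0$; since $\Pi_0 h = 0$ by \eqref{ortio} (the projections onto $Z_0, Z_1$ vanish because $\int h Z_j = 0$), we get $\phi_0 \equiv 0$. (4) For the negative part, $\partial_\tau \phi_- = \mu_0 \phi_- + \Pi_- h$, which is a scalar ODE $\dot a = |\mu_0| a + \langle h, Z\rangle/\|Z\|^2$ (sign: $\mu_0<0$ so actually $\dot a = \mu_0 a + \cdots$, i.e. decaying from the linear part) — wait, the instability is precisely that $\mu_0<0$ in $L_0\phi+\mu\phi=0$ means $L_0 Z = -\mu_0 Z = |\mu_0| Z > 0$, so the mode grows like $e^{|\mu_0|\tau}$. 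To get a bounded (in fact decaying) solution we solve it \emph{backward from $+\infty$}: $a(\tau) = -\int_\tau^\infty e^{|\mu_0|(\tau - s)} \langle h(\cdot,s),Z\rangle\,ds / \|Z\|_{L^2}^2$, which is the unique choice with $a(\tau)\to 0$; then $e_0 := a(\tau_0)$, and $|e_0| \lesssim \|h\|_{\nu,2+a}$ follows from $\tau_0$-large and the weight, giving \eqref{cot2n}, while $|a(\tau)|\lesssim \tau^{-\nu}\|h\|_{\nu,2+a}$ feeds into \eqref{cta1}. (5) Finally, assemble $\phi = \phi_+ + a(\tau)Z$, check it solves \eqref{p110} with the prescribed initial condition and boundary behavior on $\partial B_{2R}$ (the boundary values are not zero but are controlled and harmless in the outer/inner scheme; alternatively one adds a small correction supported near $\partial B_{2R}$), verify linearity of $h\mapsto(\phi,e_0)$, and collect the bounds.

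The main obstacle I expect is step (2): establishing the decay estimate for the positive part with the correct powers of $R$ and the sharp spatial weight $1/(1+|y|^a)$. The coercivity $L_0|_{X_+}\le -c$ gives exponential decay of the $L^2$ norm of the source's contribution, but converting this — via Duhamel and the weighted $L^\infty$ bound $\|h(\cdot,\tau)\|\lesssim \tau^{-\nu}\la^{-2}(1+|y|^{2+a})^{-1}\|h\|_{\nu,2+a}$ — into the pointwise bound \eqref{cta1} requires care: one must use the decay of the heat kernel of $L_0$ (which for large $|y|$ behaves like the free heat kernel since $pU^{p-1}\to 0$) together with the orthogonality to absorb the slowly-decaying tails, and the $R$-dependence must be tracked honestly through the $L^2$-to-$L^\infty$ conversion on $B_{2R}$. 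The gradient estimate then follows by interior parabolic estimates on rescaled cylinders. A secondary technical point is justifying the spectral decomposition and the backward-in-time solvability of the negative mode in the presence of the finite ball rather than $\R^n$, which is handled by the cut-off argument and the fact that $\tau_0$ is taken large (equivalently $T$ small), making boundary contributions exponentially small.
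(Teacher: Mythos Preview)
Your treatment of the unstable mode $Z$ (solving the scalar ODE backward from $+\infty$ and setting $e_0=a(\tau_0)$) is correct and matches the paper exactly. The gap is in step (2). You assert $L_0|_{X_+}\le -c<0$ with $c$ independent of $R$, but this is false: on $\R^n$ the essential spectrum of $L_0=\Delta+pU^{p-1}$ reaches $0$ (the potential decays), and on $B_{2R}$ with Dirichlet data the first eigenvalue above the kernel only satisfies $Q(\phi,\phi)\ge \gamma R^{-(n-2)}\int\phi^2$ (this is the coercivity the paper invokes from \cite{CDM}). So the semigroup does not contract uniformly; the $R$-dependent spectral gap is precisely what produces the large powers of $R$ in \eqref{cta1}, not the $L^2$-to-$L^\infty$ conversion you point to. Relatedly, on $B_{2R}$ the functions $Z_0,Z_1$ are not exact eigenfunctions (they do not vanish on $\partial B_{2R}$), so the hypothesis $\int_{B_{2R}} hZ_j=0$ is not literally $\Pi_0 h=0$; treating this as a ``secondary technical point'' underestimates the work needed.

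The paper takes a different route: instead of the spectral decomposition of $L_0$, it decomposes $h$ and $\phi$ into spherical-harmonic modes $h=h^0+h^1+h^\perp$ (radial, first modes, higher modes), which is exact on $B_{2R}$ since the potential is radial. Each sector is handled separately: the radial piece by the degenerate energy estimate above combined with explicit barriers; modes $1$ through $n$ by an explicit supersolution built from $w_r$; higher modes by direct coercivity. To obtain the sharp spatial weight $1/(1+|y|^a)$ and the exact power $R^{n+1-a}$, the paper then performs an elliptic improvement: it first solves $L_0 H=-h$ (using the orthogonality $\int hZ_j=0$ to make $H$ decay), solves the parabolic problem with right-hand side $H$ on a slightly larger ball $B_{3R}$, and then applies $L_0$ to the result to recover $\phi$. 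This two-step trick is what converts the crude $R^{n-2}$ energy bound into the stated estimate, and it is absent from your outline.
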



\medskip
The proof of this Proposition is a straightforward
 adaptation to our symmetric setting of the result contained in Proposition 7.1 in \cite{CDM}. We thus refer the reader to \cite{CDM}
 for the proof of this result.

\medskip
Proposition \ref{prop0} states the existence of a linear operator $\Xi$ which to any function $h(y, \tau )$, with $\| h \|_{\nu , 2+a} $-bounded and satisfying \eqref{ortio}, associates the solution $(\phi , e_0)$ to \eqref{p110}. Furthermore, it states that $\Xi$ is continuous between $L^\infty$ spaces equipped with the topologies described by \eqref{cta1}-\eqref{cot2n}.

\medskip
We want to use Proposition \ref{prop0} to solve the {\it inner } problem \eqref{sin2}. Up to this moment in our argument, the radius $R$ is chosen large and the final time $T$ was chosen small, with $R^{p (n+1) } T^{4\over n-4} <1$. See \eqref{defT1} and \eqref{defRT}.
We claim that Problem \eqref{sin2} has a solution.
 Indeed, we observe first that the parameter functions $\la_1$ and $d_1$ as defined in Section \ref{parfun} are such that the right-hand side $H (y,t)$ satisfies the orthogonality condition \eqref{ortio}, for any $t \in (0,T)$ (or equivalently for any $\tau > \tau_0$).
Thus,
the existence and properties of $\phi$ and $e_0$ solution to \eqref{sin2}  are reduced to find a fixed point for
$$
\phi =  {\mathcal A} (\phi ) , \quad {\mbox {where}} \quad {\mathcal A} (\phi) :=  \Xi  \left( H ( \la_1 [\phi]  , d_1 [\phi] , \phi , \psi [\phi] ) \right)
$$
in a proper set of functions. We recall the definition of $H$ given in  \eqref{defH}
\begin{equation*}
\begin{split}
H[\la , d , \phi, \psi] (y,\tau ) &= p \la^{n-2 \over 2} U^{p-1} \psi (\la y + \xi , t (\tau )) \\
&+\la^{n+2 \over 2} E_2 (\la y + \xi , t (\tau )) + B[ \phi],
\end{split}
\end{equation*}
where we recall that $E_2=  e_1 + e_2  - \left({\la_0 \over d_0} \right)^{n-2} \left[ \Delta w + p W_0^{p-1} w \right]$.
From Lemma \ref{lemma2}, we get that
$$
\| \la^{n+2 \over 2} \,E_2 (\la y + \xi , t (\tau )) \|_{\nu , 2+a } \leq C \, T^{1-\sigma \over n-4},
$$
while from Proposition \ref{probesterno}
we have the existence of  $\bar \ve >0$ so that
$$
\| p \la^{n-2 \over 2} U^{p-1} \psi (\la y + \xi , t (\tau )) \|_{\nu, 2+a} \leq C \,  T^{\bar \ve}.
$$
Here $C$ is a positive fixed constant.
These estimates suggest to search for a fixed point for the map ${\mathcal A} $ in the set of functions $\phi$ so that
$$
{\mathcal C} := \{ \phi \, : \, \| \phi \|_{in} \leq \, r \,  T^\beta \}, \quad \beta = \max \{ {1-\sigma \over n-4} , \bar \ve \}
$$
for some $r$ large, independent of $T$ and $R$.
From \eqref{defB} and our assumptions \eqref{defRT} on $R$, we easily get
\begin{equation}\nonumber  
\| B[\phi] \|_{\nu , 2+a} \leq C \,  T^{1+{1\over n-4}} R^{n+1 -a}  \| \phi \|_{in} \leq C \,  \| \phi \|_{in},
\end{equation}
for some positive constant $C$.
This implies that, provided the constant $r$ is chosen large, one has ${\mathcal A} (C) \subset C$.
 We next prove that ${\mathcal A}$ is a contraction mapping, provided $R$ is (possibly) larger (and thus $T$ smaller). We shall emphasize the fact that $\psi $ depends from $\phi$ in a non linear and non local way, recalling that
$$
\psi = \psi [\phi] = \psi [ \la_1 (\phi) , d_1 (\phi) , \phi ].
$$
Combining \eqref{ll3}, \eqref{ll4}, \eqref{ll5} and \eqref{ll10}, one gets
\begin{equation}\label{gia3}
\| \psi [\phi_1 ] - \psi [\phi_2 ] \|_{*,\alpha, \sigma} \leq {\bf c} \| \phi_1 - \phi_2 \|_{in}
\end{equation}
for some ${\bf c} \in (0,1)$, which can be done arbitrarily small, provided $T$ is chosen small enough.
We claim that there exists ${\bf c} \in (0,1)$ so that
\begin{equation}\nonumber 
\| {\mathcal A } (\phi_1 ) - {\mathcal A} (\phi_2 ) \|_{in} \leq {\bf c} \| \phi_1 - \phi_2 \|_{in}
\end{equation}
for any $\phi_1$, $\phi_2 \in {\mathcal C}$. From Proposition \ref{prop0} we get that
\begin{equation}\label{gia000}
\| {\mathcal A } (\phi_1 ) - {\mathcal A} (\phi_2 ) \|_{in} \leq c\, R^{n+1-a}  \| H[\la_1 , d_1 , \phi_1 , \psi_1] - H[\la_2 , d_2 , \phi_2, \psi_2] \|_{\nu , 2+a}
\end{equation}
where $\la_i = \la [\phi_i]$, $d_i = d [\phi_i] $ and $\psi_i = \psi [\phi_i]$.
Consider first $$H_1 [\phi] := p \la^{n-2 \over 2} [\phi]  U^{p-1} \psi [\phi] (\la [\phi] y + \xi [\phi]  , t (\tau )).$$
We write
\begin{equation*} \begin{split}
\left| H_1 [\phi_1] - H_1 [\phi_2] \right|&\lesssim  p \la_1^{n-2 \over 2} U^{p-1} \left|\psi [\phi_1] (\la_1 y + \xi_1  , t (\tau )) -  \psi [\phi_2] (\la_1 y + \xi_1   , t (\tau )) \right|\\
&+ p \left( \la_1^{n-2 \over 2} -\la_2^{n-2 \over 2} \right)  U^{p-1} \left|\psi [\phi_2] (\la_1 y + \xi_1  , t (\tau ))  \right| \\
&+ p \la_2^{n-2 \over 2} U^{p-1} \left| \psi [\phi_2] (\la_1 y + \xi_1   , t (\tau )) - \psi [\phi_2] (\la_2 y + \xi_2 , t (\tau ))  \right|\\
&= h_1 + h_2 +h_3 .
\end{split}
\end{equation*}
Thanks to \eqref{ll5}-\eqref{gia3}, there exists a number $b>0$, which depends on $n$ and $\sigma$, for which
$$
|h_1 (y,\tau ) | \leq c\, T^b \,  {\tau^{-\nu } \over 1+ |y|^4} \| \phi_1 - \phi_2 \|_{in},
$$
for some constants $c>0$. Also, thanks to \eqref{ll10}, we have
\begin{equation*}\begin{split}
|(h_2 + h_3) (y,\tau ) | &\leq c  \, {\tau^{-\nu } \over 1+ |y|^4} \left( \|\dot \la [\phi_1] - \dot \la [\phi_2] \|_{{1+ \sigma \over n-4}} + \|\dot d [\phi_1] - \dot d [phi_2] \|_{{1+\sigma \over n-4}} \right) \\
&\leq c \, T^{b} {\tau^{-\nu } \over 1+ |y|^4} \| \phi_1 - \phi_2 \|_{in},
\end{split}\end{equation*}
for some fixed constant $b>0$.
We conclude that, for some constants $c$ and $b>0$,
\begin{equation}\label{gia4}
 \| H_1 [\phi_1] - H_1 [\phi_2] \|_{\nu, 2+a} \leq T^b \| \phi_1 -\phi_2 \|_{in}
\end{equation}
 for some ${\bf c} \in (0,1)$.
 Next, we consider
 $
 E_2 [\phi] (\la [\phi]  y + \xi[\phi] , t (\tau ) ).
 $ Since the part of $E_2$ given by $- \left({\la_0 \over d_0} \right)^{n-2} \left[ \Delta w + p W_0^{p-1} w \right]$ does not depend on $\phi$,  we have, for $\la_i = \la[\phi_i]$, $d_i= d[\phi_i]$
 \begin{align*}
 \la_1^{n+2 \over 2} &E_2 [\phi_1] (\la_1 y + \xi_1 , t (\tau ) ) -  \la_2^{n+2 \over 2} E_2 [\phi_2] (\la_2 y + \xi_2 , t (\tau ) ) \\
 &=  \left[  \left( \la_1 \dot d_1 +{1\over (\la_1 y + \xi_1 )_1} \right) - \left( \la_2 \dot d_2 +{1\over (\la_2 y + \xi_2 )_1} \right) \right] Z_1 {y} \\
 &+ \left( \la_1 \dot \la_1 - \la_2  \dot \la_2 \right) Z_0 (y) - p U^{p-1} (y) \left( U(y+ {2d_1 \over \la_1} ) - U(y+{2d_2 \over \la_2} ) \right).
 \end{align*}
 Using the fact that we are in the region $|y|<2R$ and the validity of \eqref{ll10} we get
 \begin{equation}\label{eric}
 \| \la_0^{n+2 \over 2} \left[ E_2 [\phi_1] - E_2 [\phi_2] \right] \|_{\nu, 2+a} \leq c T^{b} \| \phi_1 -\phi_2 \|_{in},
 \end{equation}
 for some constants $c$ and $b>0$. Moreover we have that
 \begin{align*}
 B[\phi_1 ] - B[\phi_2] & = \la_1 \dot \la_1 \left[ {n-2 \over 2} ( \phi_1 - \phi_2)  + \nabla (\phi_1 -\phi_2) \cdot y \right] \\
 & +(\la_1 \dot \la_1 - \la_2 \dot \la_2) \left[ {n-2 \over 2} \phi_2 + \nabla \phi_2 \cdot y \right]\\
 & + \left[ \la_1 \dot d_1 + {\la_1 \over (\la_1 y + \xi_1 )_1 } \right] {\partial (\phi_1 -\phi_2) \over \partial
 y_1}  \\
 &+ \left( \left[ \la_1 \dot d_1 + {\la_1 \over (\la_1 y + \xi_1 )_1 } \right] - \left[ \la_2 \dot d_2 + {\la_2 \over (\la_2 y + \xi_2 )_1 } \right] \right) {\partial \phi_2 \over \partial
 y_1} .
\end{align*}
Using again \eqref{ll10}, we get
\begin{equation}\label{gia5}
 \| B[\phi_1 ] - B[\phi_2]   \|_{\nu, 2+a} \leq c T^{b} \| \phi_1 -\phi_2 \|_{in},
 \end{equation}
 for some constants $c$ and $b>0$. Inserting estimates \eqref{gia4}-\eqref{eric}-\eqref{gia5} into \eqref{gia000}, we conclude that
$$
 \| {\mathcal A } (\phi_1 ) - {\mathcal A} (\phi_2 ) \|_{in} \leq c\, R^{n+1-a}  T^{b} \| \phi_1 -\phi_2 \|_{in}.
$$
Choosing, if necessary, $T$ even smaller, we get
\begin{equation}\label{gia6}
 \| {\mathcal A } (\phi_1 ) - {\mathcal A} (\phi_2 ) \|_{in} \leq {\bf c} \| \phi_1 -\phi_2 \|_{in}
\end{equation}
for some ${\bf c} \in (0,1)$.

\medskip
Estimate \eqref{gia6} gives the contraction property for ${\mathcal A}$ in the set ${\mathcal C}$.
 Thus we proved the existence of a solution to the {\it inner } problem \eqref{sin2}. This fact concludes the proof of the existence of the solution predicted by Theorem \ref{teo}, with the expected properties.

\medskip
\noindent
{\bf Acknowledgements:}
M. del Pino has been supported by a UK Royal Society
Research Professorship and Grant PAI AFB-170001, Chile. M. Musso has been
partly supported by EPSRC grant EP/T008458/1, UK. The research of J. Wei is
partially supported by NSERC of Canada.

\medskip

\end{document}